\newcommand{\reals}{{\mathbb{R}}}
\newcommand{\integers}{{\mathbb{Z}}}
\newcommand{\naturals}{{\mathbb{N}}}
\renewcommand{\d}{{\rm{d}}}
\newcommand{\iw}{\textrm{i}_{w}}
\newcommand{\modulo}[1]{{\left|#1\right|}}
\newcommand{\norma}[1]{{\left\|#1\right\|}}
\newcommand{\Cc}[1]{\mathbf{C_c^{#1}}}
\newcommand{\Lip}{\mathbf{Lip}}
\newcommand{\lip}{\mathrm{Lip}}
\newtheorem{thm}{Theorem}[section]
\newtheorem{pro}{Proposition}[section]
 \renewcommand{\L}[1]{\mathbf{L^{#1}}}
\newcommand{\sign}{\mathrm{sign}}
\journal{Preprint version}
\begin{document}

\begin{frontmatter}



\title{Qualitative behaviour and numerical approximation of solutions to conservation laws with non-local point constraints on the flux and modeling of crowd dynamics at the bottlenecks}


\author[Besancon]{Boris Andreianov}
\ead{boris.andreianov@univ-fcomte.fr}

\author[Besancon]{Carlotta Donadello}
\ead{carlotta.donadello@univ-fcomte.fr}

\author[Besancon]{Ulrich Razafison\corref{mycorrespondingauthor}}
\cortext[mycorrespondingauthor]{Corresponding author}
\ead{ulrich.razafison@univ-fcomte.fr}

\author[Warsaw]{Massimiliano D.~Rosini}
\ead{mrosini@icm.edu.pl}

\address[Besancon]{Laboratoire de Math\'ematiques,
Universit\'e de Franche-Comt\'e,\\
16 route de Gray,
25030 Besan\c{c}on Cedex,
France}

\address[Warsaw]{
ICM, University of Warsaw\\
ul.~Prosta 69,
P.O. Box 00-838,
Warsaw, Poland}

\begin{abstract}
In this paper we investigate numerically the model for pedestrian traffic proposed in [B.~Andreianov, C.~Donadello, M.D.~Rosini, Crowd dynamics and conservation laws with nonlocal constraints and capacity drop, Mathematical Models and Methods in Applied Sciences 24 (13) (2014) 2685-2722] . We prove the convergence of a scheme based on a constraint finite volume method and validate it with an explicit solution obtained in the above reference. We then perform \emph{ad hoc} simulations to qualitatively validate the model under consideration by proving its ability to reproduce typical phenomena at the bottlenecks, such as Faster Is Slower effect and the Braess' paradox. 
\end{abstract}

\begin{keyword}
    finite volume scheme \sep scalar conservation law \sep non-local point constraint \sep crowd dynamics \sep capacity drop  \sep Braess' paradox \sep Faster Is Slower

    \MSC 35L65 \sep 90B20 \sep 65M12 \sep 76M12
\end{keyword}

\end{frontmatter}



\section{Introduction}\label{sec:intro}

\noindent Andreianov, Donadello and Rosini developed in~\cite{BorisCarlottaMax-M3AS} a macroscopic model, called here ADR, aiming at describing the behaviour of pedestrians at bottlenecks. The model is given by the Cauchy problem for a scalar hyperbolic conservation law in one space dimension with non-local point constraint of the form
\begin{subequations}\label{eq:constrianed}
\begin{align}\label{eq:constrianed1}
    &\partial_t\rho + \partial_xf(\rho) = 0 && (t,x) \in \reals_+\times\reals,\\
    \label{eq:constrianed3}
    &\rho(0,x) =\bar\rho(x) && x \in \reals,\\
    \label{eq:constrianed2}
    &f\left(\rho(t,0\pm)\right) \le p\left( \int_{\reals_-} w(x)~ \rho(t,x)~{\d} x\right) && t \in \reals_+,
\end{align}
\end{subequations}
where $\rho(t,x) \in \left[0,R\right]$ is the (mean) density of pedestrians in $x \in \reals$ at time $t \in \reals_+$ and $\bar\rho  \colon \reals \to [0,R]$ is the initial (mean) density, with $R >0$ being the maximal density. 
Then, $f \colon [0,R] \to \reals_+$ is the flow considered to be bell-shaped, which is an assumption commonly used in crowd dynamics. A typical example of such flow is the so-called Lighthill-Whitham-Richards (LWR) flux~\cite{LWR1, LWR2, Greenshields_1934} defined by 
$$f(\rho)=\rho\,v_{\max}\left(1-\dfrac{\rho}{\rho_{\max}}\right),$$where $v_{\max}$ and $\rho_{\max}$ are the maximal velocity and the maximal density of pedestrians respectively. Throughout this paper the LWR flux will be used.
Next $p \colon \reals_+ \to \reals_+$ prescribes the maximal flow allowed through a bottleneck located at $x=0$ as a function of the weighted average density in a left neighbourhood of the bottleneck and $w  \colon \reals_- \to \reals_+$ is the weight function used to average the density. 

Finally in \eqref{eq:constrianed2}, $\rho(t,0-)$ denotes the left measure theoretic trace along the constraint, implicitly defined by
\begin{align*}
&\lim_{\varepsilon\downarrow0}\frac{1}{\varepsilon} \int_0^{+\infty}  \int_{-\varepsilon}^0 \modulo{\rho(t,x) - \rho(t,0-)}~\phi(t,x)~{\d} x~{\d} t=0&
&\text{for all }\phi \in \Cc\infty(\reals^2;\reals).
\end{align*}
The right measure theoretic trace, $\rho(t,0+)$, is defined analogously.\\

\noindent In the last few decades, the study of the pedestrian behaviour through bottlenecks, namely at locations with reduced capacity, such as doors, stairs or narrowings, drawn a considerable attention. The papers \cite{Schadschneider2008Evacuation, Cepolina2009532, Hoogendoorn01052005, Kopylow, Schreckenberg2006, Seyfried:59568, Zhang20132781} present results of empirical experiments. 
However, for safety reasons, experiments reproducing extremal conditions such as evacuation and stampede are not available.
In fact, the unique experimental study of a crowd disaster is proposed in~\cite{Helbing_disaster}.
The available data show that the capacity of the bottleneck (i.e.~the maximum number of pedestrians that can flow through the bottleneck in a given time interval) can drop when high-density conditions occur upstream of the bottleneck. 
This phenomenon is called \emph{capacity drop} and can lead to extremely serious consequences in escape situations.
In fact, the crowd pressure before an exit can reach very high values, the efficiency of the exit dramatically reduces and accidents become more probable due to the overcrowding and the increase of the evacuation time (i.e.~the temporal gap between the times in which the first and the last pedestrian pass through the bottleneck). 
A linked phenomenon is the so-called \emph{Faster Is Slower} (FIS) effect, first described in~\cite{Helbing2000Simulating}. 
FIS effect refers to the jamming and clogging at the bottlenecks, that result in an increase of the evacuation time when the degree of hurry of a crowd is high. 
We recall that the capacity drop and the FIS effect are both experimentally reproduced in~\cite{Cepolina2009532,Soria20121584}.
A further related (partly counter-intuitive) phenomenon is the so-called \emph{Braess' paradox} for pedestrian flows~\cite{hughes2003flow}.
It is well known that placing a small obstacle before an exit door can mitigate the inter-pedestrian pressure and, under particular circumstances, it reduces the evacuation time by improving the outflow of people.

Note that as it happens for any first order model, see for instance~\cite[Part~III]{Rosinibook} and the references therein, ADR can not explain the capacity drop and collective behaviours at the bottlenecks. Therefore one of the difficulties we have to face is that the constraint $p$ has to be deduced together with the fundamental diagram from the empirical observations. 

The aim of this paper is to validate ADR by performing simulations in order to show the ability of the model to reproduce the main effects described above and related to capacity drop that are FIS and Braess' paradox. To this end we propose a numerical scheme for the model and prove its convergence. The scheme is obtained by adapting the local constrained finite volume method introduced in~\cite{scontrainte} to the non-local case considered in ADR, using a splitting strategy. 

The paper is organized as follows. In Section~\ref{sec:model} we briefly recall the main theoretical results for ADR. In Section~\ref{sec:NumericalMethod} we introduce the numerical scheme, prove its convergence and validate it with an explicit solution obtained in \cite{BorisCarlottaMax-M3AS}. In Section~\ref{sec:simulations} we perform simulations to show that ADR is able to reproduce the Braess' paradox and the FIS effect. In Subsection~\ref{sec:Braess+FIS} we combine local and non-local constraints to model a slow zone placed before the exit. Conclusions and perspectives are outlined in Section~\ref{sec:conclusions}.

\section{Well-posedness for the ADR model}\label{sec:model}

Existence, uniqueness and stability for the general Cauchy problem~\eqref{eq:constrianed} are established in~\cite{BorisCarlottaMax-M3AS} under the following assumptions:
\begin{enumerate}[label=\bfseries{(W)}]
  \item[\textbf{(F)}]  $f$ belongs to $\Lip\left( [0,R]; \left[0, +\infty\right[ \right)$ and is supposed to be bell-shaped, that is $f(0) = 0 = f(R)$ and there exists $\sigma \in \left]0,R\right[$ such that $f'(\rho)~(\sigma-\rho)>0$ for a.e.~$\rho \in [0,R]$.
  \item[\textbf{(W)}] $w$ belongs to $\L\infty(\reals_-;\reals_+)$, is an increasing map, $\norma{w}_{\L1(\reals_-)} = 1$ and there exists $\iw >0$ such that $w(x) = 0$ for any $x \le -\iw$.
  \item[\textbf{(P)}] $p$ belongs to $\Lip \left( \left[0,R\right]; \left]0,f(\sigma)\right] \right)$ and is a non-increasing map.
\end{enumerate}

The regularity $w\in \L\infty(\reals_-;\reals_+)$ is the minimal requirement needed in order to prove existence and uniqueness of~\eqref{eq:constrianed}. In this paper, we shall consider continuous $w$.  

The existence of solutions for the Riemann problem for~\eqref{eq:constrianed} is proved in~\cite{AndreianovDonadelloRosiniRazafisonProc} for piecewise constant $p$. 
However, such hypothesis on $p$ is not sufficient to ensure uniqueness of solutions, unless the flux $f$ and the efficiency $p$ satisfy a simple geometric condition, see~\cite{AndreianovDonadelloRosiniRazafisonProc} for details.
In the present paper, we consider either continuous nonlinear $p$ or a piecewise constant $p$ that satisfies such geometric condition.

The definition of entropy solution for a Cauchy problem~\eqref{eq:constrianed1}, \eqref{eq:constrianed3} with a fixed \emph{a priori} time dependent constraint condition
\begin{align}\label{eq:constrianed2bis}
    &f\left(\rho(t,0\pm)\right) \le q(t) && t \in \reals_+
\end{align}
was introduced in~\cite[Definition~3.2]{ColomboGoatinConstraint} and then reformulated in~\cite[Definition~2.1]{scontrainte}, see also~\cite[Proposition~2.6]{scontrainte} and \cite[Definition 2.2]{chalonsgoatinseguin}.
Such definitions are obtained by adding a term that accounts for the constraint in the classical definition of entropy solution given by Kruzkov in~\cite[Definition~1]{Kruzkov}.
The definition of entropy solution given in~\cite[Definition~2.1]{BorisCarlottaMax-M3AS} is obtained by extending these definitions to the framework of non-local constraints.

The following theorem on existence, uniqueness and stability of entropy solutions of the constrained Cauchy problem~\eqref{eq:constrianed} is achieved under the hypotheses~\textbf{(F)}, \textbf{(W)} and~\textbf{(P)}.
\begin{thm}[Theorem~3.1 in~\cite{BorisCarlottaMax-M3AS}]\label{thm:1}
    Let~\textbf{(F)}, \textbf{(W)}, \textbf{(P)} hold. Then, for any initial datum $\bar\rho \in \L\infty(\reals;[0,R])$, the Cauchy problem~\eqref{eq:constrianed} admits a unique entropy solution $\rho$. Moreover, if $\rho' = \rho'(t,x)$ is the entropy solution corresponding to the initial datum $\bar\rho' \in \L\infty(\reals;[0,R])$, then for all~$T>0$ and $L>{\rm{i}_w}$, the following inequality holds
\begin{equation}\label{eq:lipdepen-localized}
\norma{\rho(T) - \rho'(T)}_{\L1([-L,L])} \le e^{CT}\norma{\bar\rho - \bar\rho'}_{\L1(\{\modulo{x}\le L+MT\})},
\end{equation}
where $M=\lip(f)$ and $C=2 \lip(p) \norma{w}_{\L\infty(\reals_-)}$.
\end{thm}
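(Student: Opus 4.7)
The plan is to reduce the non-local problem~\eqref{eq:constrianed} to a fixed-point problem involving the locally constrained Cauchy problem. For any a priori time-dependent constraint $q \in \L\infty(\reals_+;[0,f(\sigma)])$, denote by $\rho[q]$ the unique entropy solution of~\eqref{eq:constrianed1}, \eqref{eq:constrianed3} coupled with~\eqref{eq:constrianed2bis}, whose well-posedness is granted by~\cite{scontrainte,ColomboGoatinConstraint}. Introduce the operator
\begin{equation*}
\mathcal{T}(q)(t) := p\Bigl(\int_{\reals_-} w(x)\,\rho[q](t,x)\,\d x\Bigr),
\end{equation*}
designed so that its fixed points are precisely the entropy solutions of~\eqref{eq:constrianed} in the sense of~\cite[Definition~2.1]{BorisCarlottaMax-M3AS}. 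Existence and uniqueness for~\eqref{eq:constrianed} thus reduce to proving that $\mathcal{T}$ admits a unique fixed point on every finite time interval $[0,T]$.

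\textbf{Contraction step.} The key ingredient is an $\L1$-stability estimate for the locally constrained problem with respect to $q$, obtained via a Kruzkov-type doubling-of-variables argument adapted to accommodate the constrained entropy inequality at the interface $x=0$, in the spirit of~\cite[Proposition~2.6]{scontrainte}: for two constraints $q_1,q_2$ with a common initial datum,
\begin{equation*}
\norma{\rho[q_1](t)-\rho[q_2](t)}_{\L1(\reals)} \le 2\int_0^t \modulo{q_1(s)-q_2(s)}\,\d s,
\end{equation*}
the factor $2$ reflecting the contribution of both traces $\rho(t,0\pm)$. Since $\rho\mapsto p(\int w\rho)$ is Lipschitz with constant at most $\lip(p)\,\norma{w}_{\L\infty(\reals_-)}$, this yields
\begin{equation*}
\modulo{\mathcal{T}(q_1)(t)-\mathcal{T}(q_2)(t)} \le C\int_0^t \modulo{q_1(s)-q_2(s)}\,\d s,
\end{equation*}
with $C=2\lip(p)\,\norma{w}_{\L\infty(\reals_-)}$. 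Iterating this Volterra-type inequality (equivalently, endowing $\L\infty([0,T])$ with the weighted norm $q\mapsto \sup_{t}e^{-\lambda t}\modulo{q(t)}$ for $\lambda$ sufficiently large) makes $\mathcal{T}$ a contraction, and Banach's theorem produces a unique fixed point.

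\textbf{Localized stability.} To derive~\eqref{eq:lipdepen-localized} for the two solutions $\rho,\rho'$ associated with initial data $\bar\rho,\bar\rho'$, apply the local stability estimate to the pair $(\rho,\rho')$ viewed as solutions of the locally constrained problem with their respective non-local constraints $q(t),q'(t)$. Finite propagation speed at rate $M=\lip(f)$ localizes the dependence on the initial data to the truncated cone $\{\modulo{x}\le L+MT\}$, while the condition $L>\iw$ ensures that $\mathrm{supp}(w)\subset[-L,L]$, so that
\begin{equation*}
\modulo{q(t)-q'(t)} \le \lip(p)\,\norma{w}_{\L\infty(\reals_-)}\,\norma{\rho(t)-\rho'(t)}_{\L1([-L,L])}.
\end{equation*}
Inserting this into the local stability bound and applying Gronwall's lemma with constant $C=2\lip(p)\,\norma{w}_{\L\infty(\reals_-)}$ yields the exponential estimate~\eqref{eq:lipdepen-localized}. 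The main technical obstacle throughout is the adapted doubling-of-variables step: the non-standard entropy inequality forces a delicate handling of the interface term at $x=0$, which requires the $\L1$ trace theory for entropy solutions together with the precise reformulation introduced in~\cite[Definition~2.1]{scontrainte}.
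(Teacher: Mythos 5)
Your proposal is essentially the proof strategy of the source of this statement: the present paper only recalls Theorem~\ref{thm:1} from~\cite{BorisCarlottaMax-M3AS} without proof, and that reference indeed proceeds exactly as you do, reducing to the locally constrained problem~\eqref{eq:constrianed1}, \eqref{eq:constrianed3}, \eqref{eq:constrianed2bis}, using the stability estimate $\norma{\rho[q_1](t)-\rho[q_2](t)}_{\L1(\reals)}\le 2\int_0^t\modulo{q_1-q_2}\,\d s$ from~\cite{scontrainte} to make $q\mapsto p\bigl(\int_{\reals_-}w\,\rho[q]\bigr)$ contractive, and then combining finite speed of propagation with Gronwall to get $C=2\lip(p)\norma{w}_{\L\infty(\reals_-)}$ (the same mechanism reappears at the discrete level in Proposition~\ref{pro.diff.solutions} here). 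The argument is sound as sketched; the only point to state carefully is that in the Gronwall step the $\L1$ norm must be taken on the cone sections $[-L-M(T-t),L+M(T-t)]$, which contain $[-L,L]\supset\mathrm{supp}(w)$ since $L>\iw$, so your bound on $\modulo{q(t)-q'(t)}$ closes the loop as claimed.
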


The total variation of the solution may in general increase due to the presence of the constraint.
In~\cite{BorisCarlottaMax-M3AS} the authors provide an invariant domain $\mathcal{D} \subset \L1 \left(\reals ; [0,R]\right)$ such that if $\bar\rho$ belongs to $\mathcal{D}$, then one obtains a Lipschitz estimate with respect to time of the $\L1$ norm and an a priori estimate of the total variation of
\[
\Psi(\rho) = \sign(\rho-\sigma) [f(\sigma) - f(\rho)] = \int_\sigma^\rho \modulo{\dot{f}(r)} \, \d r.
\]

\section{Numerical method for approximation of ADR}\label{sec:NumericalMethod}
In this section we describe the numerical scheme  based on finite volume method that we use to solve~\eqref{eq:constrianed}. 
Then we prove the convergence of our scheme and validate it by comparison with an explicit solution of~\eqref{eq:constrianed}.
In what follows, we assume that \textbf{(F)}, \textbf{(W)} and~\textbf{(P)} hold.

\subsection{Non-local constrained finite volume method}

Let $\Delta x$ and $\Delta t$ be the constant space and time steps respectively.
We define the points $x_{j+1/2}=j\Delta x$, the cells $K_j=[x_{j-1/2},x_{j+1/2}[$ and the cell centers $x_j=(j-1/2)\Delta x$ for $j\in\integers$. We define the time discretization $t^n=n\Delta t$.
We introduce the index $j_c$ such that $x_{j_c+1/2}$ is the location of the constraint (a door or an obstacle). For $n\in\naturals$ and $j\in\integers$, we denote by $\rho_j^n$ the approximation of the average
of $\rho(t^n,\cdot~)$ on the cell $K_j$, namely
\begin{align*}
&\rho_j^0=\frac{1}{\Delta x}\displaystyle\int_{x_{j-1/2}}^{x_{j+1/2}}\overline{\rho}(x)\,\d x&
&\text{ and }&
&\rho_j^n\simeq\frac{1}{\Delta x}\displaystyle\int_{x_{j-1/2}}^{x_{j+1/2}}\rho(t^n,x)\,\d x
&\text{ if }n>0.
\end{align*}

We recall that for the classical conservation law~\eqref{eq:constrianed1}-\eqref{eq:constrianed3},
a standard finite volume method can be written into the form 
\begin{equation}
\label{schema}
\rho_j^{n+1}=\rho_j^n-\frac{\Delta t}{\Delta x}\left(\mathcal{F}_{j+1/2}^n-\mathcal{F}_{j-1/2}^n\right),
\end{equation} 
where $\mathcal{F}_{j+1/2}^n=F\left(\rho_j^n,\rho_{j+1}^n\right)$ is a monotone, consistent numerical flux, that is, $F$ satisfies the following assumptions:
\begin{itemize}
\item $F$ is Lipschitz continuous from $[0,R]^2$ to $\reals$ with Lipschitz constant $\lip(F)$,
\item $F(a,a)=f(a)$ for any $a\in[0,R]$,
\item $(a,b) \in [0,R]^2 \mapsto F(a,b) \in \reals$ is non-decreasing with respect to $a$ and non-increasing with respect to $b$.
\end{itemize} 
We also recall that in~\cite{scontrainte} the numerical flux for the time dependent constraint~\eqref{eq:constrianed2bis} is modified as follow in order to take into account the constraint condition
\begin{align}
\label{def.flux.num.contrainte}
    &\mathcal{F}_{j+1/2}^n=\left\{\begin{array}{l@{\quad\text{if }}l}
    F\left (\rho_j^n,\rho_{j+1}^n\right )&j\ne j_c,\\[6pt]
    \min\left \{F\left(\rho_j^n,\rho_{j+1}^n\right),q^n\right \}&j=j_c,
       \end{array}\right.
\end{align}
where $q^n$ is an approximation of $q(t^n)$.
In the present paper, when dealing with a Cauchy problem subject to a non-local constraint of the form~\eqref{eq:constrianed2} we will use the approximation
\begin{equation}
\label{contrainte.approx}
q^n=p\left(\Delta x\sum_{j\le j_c}w(x_j)\,\rho_j^n\right).
\end{equation}
Roughly speaking
\begin{itemize}
\item we apply the numerical scheme~\eqref{schema} for the problem~\eqref{eq:constrianed1}-\eqref{eq:constrianed3},
\item we apply the numerical scheme~\eqref{schema}-\eqref{def.flux.num.contrainte} for the problem~\eqref{eq:constrianed1}-\eqref{eq:constrianed3}-\eqref{eq:constrianed2bis},
\item we apply the numerical scheme~\eqref{schema}-\eqref{def.flux.num.contrainte}-\eqref{contrainte.approx} for the problem~\eqref{eq:constrianed}.
\end{itemize}

\subsection{Convergence of the scheme}
Let us introduce the finite volume approximate solution $\rho_{\Delta}$ defined by
\begin{align}\label{def.rho.delta}
&\rho_\Delta(t,x)=\rho_j^n&
&\mbox{for }x\in K_j\mbox{ and }t\in [t^n,t^{n+1} [,
\end{align}
where the sequence $(\rho_{j}^n)_{j\in\integers,\,n\in\naturals}$ is obtained by the numerical scheme~\eqref{schema}-\eqref{def.flux.num.contrainte}. 
Analogously, we also define the approximate constraint function
\begin{align}
\label{def.q.delta}
&q_\Delta(t)=q^n&
&\mbox{for }t\in [t^n,t^{n+1} [.
\end{align}

First, we prove a discrete stability estimate valid for any domain $Q = [0,T] \times \reals$ with $T>0$, 
for the scheme~\eqref{schema}-\eqref{def.flux.num.contrainte} applied to problem~\eqref{eq:constrianed1}-\eqref{eq:constrianed3}-\eqref{eq:constrianed2bis}.
This estimate can be seen as the equivalent, in this framework, of the stability result established in~\cite[Proposition~2.10]{scontrainte}.

\begin{pro}
\label{pro.diff.solutions}
Let $\overline{\rho}$ be in $\L\infty(\reals;[0,R])$ and $q_\Delta$, $\hat{q}_\Delta$ be piecewise constant functions of the form~\eqref{def.q.delta}.
If $\rho_\Delta$ and $\hat{\rho}_\Delta$ are the approximate solutions of~\eqref{eq:constrianed1}-\eqref{eq:constrianed3}-\eqref{eq:constrianed2bis} corresponding, respectively, to $q_\Delta$ and $\hat{q}_\Delta$ and constructed by applying the scheme~\eqref{schema}-\eqref{def.flux.num.contrainte}, then we have 
$$\norma{\rho_\Delta-\hat{\rho}_\Delta}_{\L1(Q)}\le2 T \norma{q_\Delta-\hat{q}_\Delta}_{\L1([0,T])}.$$ 
\end{pro}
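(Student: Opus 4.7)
The plan is to exploit that $\rho_\Delta$ and $\hat\rho_\Delta$ share the same initial datum $\overline\rho$, so that $\rho_j^0-\hat\rho_j^0 \equiv 0$ and the only mechanism driving the two solutions apart is the mismatch $q^n-\hat q^n$ at the constrained interface $x_{j_c+1/2}$. Writing the scheme in the form $\rho_j^{n+1}=H_j(\rho_{j-1}^n,\rho_j^n,\rho_{j+1}^n;q^n)$, the first step is the triangle-inequality decomposition
$$\modulo{\rho_j^{n+1}-\hat\rho_j^{n+1}} \le \modulo{H_j(\rho^n;q^n)-H_j(\hat\rho^n;q^n)} + \modulo{H_j(\hat\rho^n;q^n)-H_j(\hat\rho^n;\hat q^n)}.$$

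For the first summand I would invoke the standard discrete $\L1$ contraction valid for monotone conservative schemes: summation over $j\in\integers$ gives $\Delta x\sum_j\modulo{H_j(\rho^n;q^n)-H_j(\hat\rho^n;q^n)} \le \Delta x\sum_j\modulo{\rho_j^n-\hat\rho_j^n}$. The delicate point here is to verify that $H_{j_c}$ and $H_{j_c+1}$ remain non-decreasing in each of their three $\rho$-arguments under the same CFL condition as for the unconstrained scheme; this survives the modification of the flux because $(a,b)\mapsto\min\{F(a,b),q\}$ is still non-decreasing in $a$ and non-increasing in $b$, while conservativity at $x_{j_c+1/2}$ is preserved since $\mathcal{F}^n_{j_c+1/2}$ enters $H_{j_c}$ and $H_{j_c+1}$ with opposite signs. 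The second summand vanishes for $j\notin\{j_c,j_c+1\}$, and at those two indices the elementary estimate $\modulo{\min(a,b)-\min(a,c)}\le\modulo{b-c}$ yields the bound $(\Delta t/\Delta x)\modulo{q^n-\hat q^n}$.

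Combining these two estimates produces the discrete recursion
$$\Delta x\sum_j\modulo{\rho_j^{n+1}-\hat\rho_j^{n+1}} \le \Delta x\sum_j\modulo{\rho_j^n-\hat\rho_j^n} + 2\Delta t\modulo{q^n-\hat q^n},$$
which, iterated from the zero initial discrepancy, delivers $\Delta x\sum_j\modulo{\rho_j^n-\hat\rho_j^n} \le 2\Delta t\sum_{k=0}^{n-1}\modulo{q^k-\hat q^k}$. Multiplying by $\Delta t$, summing over $n=0,\dots,N-1$ with $N\Delta t=T$ and interchanging the order of summation gives exactly $\norma{\rho_\Delta-\hat\rho_\Delta}_{\L1(Q)} \le 2T\,\norma{q_\Delta-\hat q_\Delta}_{\L1([0,T])}$. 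The main obstacle is essentially the monotonicity check at the two modified cells; once it is in hand, the remainder is bookkeeping, and the summations over $j\in\integers$ raise no integrability issue thanks to the discrete finite speed of propagation, which ensures that $\rho_j^n-\hat\rho_j^n$ is supported on a finite range of indices at each time level.
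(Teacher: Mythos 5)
Your proposal is correct and takes essentially the same route as the paper: your triangle-inequality splitting of the update operator $H_j$ into a data perturbation and a constraint perturbation is exactly the paper's decomposition through the auxiliary state $\tilde{\rho}^n$ (one step from $\rho^{n-1}$ with constraint $\hat{q}^{n-1}$), only with the order of the two perturbations swapped. Both arguments rest on the same two ingredients — the $\L1$-contraction of the constrained scheme, whose monotonicity at the interface under the CFL condition is preserved by the flux modification, and the bound $\modulo{\min(a,b)-\min(a,c)}\le\modulo{b-c}$ at the cells $j_c$, $j_c+1$ — followed by the same induction in $n$ and summation in time.
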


\begin{proof}
For notational simplicity, let $N = \lfloor T/\Delta t\rfloor$. Let us also introduce $(\tilde{\rho}_j^n)_{j\in\integers,\,n\in\naturals}$ defined by,  
$$\tilde{\rho}_j^{n+1}=\rho_j^n-\frac{\Delta t}{\Delta x}\left(\mathcal{\tilde{F}}_{j+1/2}^n-\mathcal{\tilde{F}}_{j-1/2}^n\right),\quad\mbox{for any }
j\in\integers,\,n\in\naturals,$$
where $\mathcal{\tilde{F}}_{j+1/2}^n$ is defined by 
\begin{align*}
    &\mathcal{\tilde{F}}_{j+1/2}^n=\left\{\begin{array}{l@{\quad\text{if }}l}
    F\left (\rho_j^n,\rho_{j+1}^n\right )&j\ne j_c,\\[6pt]
    \min\left \{F\left(\rho_j^n,\rho_{j+1}^n\right),\hat{q}^n\right \}&j=j_c.
       \end{array}\right.
\end{align*}
 Then using the definitions of $(\rho_j^n)_{j\in\integers,\,n\in\naturals}$ and $(\tilde{\rho}_j^n)_{j\in\integers,\,n\in\naturals}$, we have for any $n=1,\dots,N$, $$\rho_j^n=\tilde{\rho}_j^n\quad\mbox{if}\quad j\notin\{j_c,j_c+1\}$$
and  \begin{align*}
&\rho_{j_c}^n-\tilde{\rho}_{j_c}^n=
-\frac{\Delta t}{\Delta x}\left(\min\left \{F\left (\rho_{j_c}^{n-1},\rho_{j_c+1}^{n-1}\right ),q^{n-1}\right\} +\min\left \{F\left (\rho_{j_c}^{n-1},\rho_{j_c+1}^{n-1}\right ),\hat{q}^{n-1}\right \}\right),&\\
&\rho_{j_c+1}^n-\tilde{\rho}_{j_c+1}^n=
\frac{\Delta t}{\Delta x}\left(\min\left \{F\left(\rho_{j_c}^{n-1},\rho_{j_c+1}^{n-1}\right),q^{n-1}\right \} - \min\left \{F\left(\rho_{j_c}^{n-1},\rho_{j_c+1}^{n-1}\right),\hat{q}^{n-1}\right \} \right),
\end{align*}
which implies that 
\begin{align*}
&\modulo{\rho_{j_c}^n-\tilde{\rho}_{j_c}^n}\le\frac{\Delta t}{\Delta x} ~ \modulo{q^{n-1}-\hat{q}^{n-1}},&
&\modulo{\rho_{j_c+1}^n-\tilde{\rho}_{j_c+1}^n}\le\frac{\Delta t}{\Delta x} ~ \modulo{q^{n-1}-\hat{q}^{n-1}}.
\end{align*}
Therefore we deduce that, for any $n=1,\dots,N$, 
\begin{equation}
\label{pro.diff.solutions.eq1}
\sum_{j\in\integers}\modulo{\rho_{j}^n-\tilde{\rho}_{j}^n}\le2\frac{\Delta t}{\Delta x} ~ \modulo{q^{n-1}-\hat{q}^{n-1}}.
\end{equation}
Besides, observe that the modification of the numerical flux at the interface $x_{j_c+1/2}$ introduced in~\eqref{def.flux.num.contrainte} does not affect the monotonicity of the scheme~\eqref{schema}-\eqref{def.flux.num.contrainte} (see~\cite[Proposition~4.2]{scontrainte}). 
Therefore, for any $n=1,\dots,N$, we have 
\begin{equation}
\label{pro.diff.solutions.eq2}
\sum_{j\in\integers}\modulo{\tilde{\rho}_j^n-\hat{\rho}_j^n}\le
\sum_{j\in\integers}\modulo{\rho_j^{n-1}-\hat{\rho}_j^{n-1}}.
\end{equation} 
Hence thanks to~\eqref{pro.diff.solutions.eq1} and~\eqref{pro.diff.solutions.eq2}, we can write
\begin{align*}
\sum_{j\in\integers}|\rho_j^1-\hat{\rho}_j^1|&\le\sum_{j\in\integers}|\rho_j^1-\tilde{\rho}_j^1|+\sum_{j\in\integers}|\tilde{\rho}_j^1-\hat{\rho}_j^1|\le2\frac{\Delta t}{\Delta x} ~ \modulo{q^0-\hat{q}^0}+\sum_{j\in\integers}|\rho_j^0-\hat{\rho}_j^0|
=2\frac{\Delta t}{\Delta x} ~ \modulo{q^0-\hat{q}^0}.
\end{align*}
Then an induction argument shows that for any $n=1,\dots,N$, 
$$\sum_{j\in\integers}\modulo{\rho_j^n-\hat{\rho}_j^n}\le2\frac{\Delta t}{\Delta x}\sum_{k=0}^{n-1}|q^k-\hat{q}^k|\le\frac{2}{\Delta x}~\|q_\Delta-\hat{q}_\Delta\|_{L^1([0,t^n])}.$$
In conclusion, we find that 
\begin{align*}
\|\rho_\Delta-\hat{\rho}_\Delta\|_{L^1(Q)}&=\Delta t\,\Delta x\sum_{n=1}^N\sum_{j\in\integers}|\rho_j^n-\hat{\rho}_j^n|
\le 2 \|q_\Delta-\hat{q}_\Delta\|_{L^1([0,T])}~\sum_{n=1}^N \Delta t
\le 2 T \|q_\Delta-\hat{q}_\Delta\|_{L^1([0,T])}
\end{align*}
and this ends the proof.
\end{proof}

\noindent Let us now notice that as in~\cite[Proposition~4.2]{scontrainte}, under the CFL condition 
\begin{equation}
\label{CFL}
\lip(F) ~ \frac{\Delta t}{\Delta x}\le\frac{1}{2},
\end{equation}
we have the $\L\infty$ stability of the scheme~\eqref{schema}-\eqref{def.flux.num.contrainte}-\eqref{contrainte.approx} that is 
\begin{align}
\label{stability.Linfty}
&0\le\rho_\Delta(t,x)\le R&&\mbox{for a.e.~}(t,x)\in Q. 
\end{align}
\noindent This stability result allows to prove the statement below. 

\begin{pro}
\label{prop.estimation.bv.q}
Let $q_\Delta$ be defined by~\eqref{contrainte.approx}-\eqref{def.q.delta}. Then under the CFL condition~\eqref{CFL}, for any $T>0$, there exists $C>0$ only depending on $T$, $f$, $F$, $p$, $w$ and $R$ such that:
\begin{equation}
\label{estimation.bv.q}
\modulo{q_\Delta}_{BV([0,T])}\le C.
\end{equation}
\end{pro}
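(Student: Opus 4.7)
The plan is to exploit the Lipschitz continuity of $p$, the scheme structure, and a discrete summation by parts to show that the time increments $|q^{n+1} - q^n|$ are of order $\Delta t$, after which summing over time levels yields the uniform BV bound. The key insight is that the convolution of $\rho$ against $w$ smooths out, in the Abel-summation sense, the spatial irregularities of the discrete solution, so that pointwise $L^\infty$ control on the numerical fluxes suffices.

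Setting $A^n \mathrel{\mathop:}= \Delta x \sum_{j \le j_c} w(x_j)\,\rho_j^n$, the Lipschitz property of $p$ gives
\[
|q^{n+1} - q^n| \le \lip(p)\,|A^{n+1} - A^n|.
\]
Using the scheme update $\rho_j^{n+1} - \rho_j^n = -\tfrac{\Delta t}{\Delta x}(\mathcal{F}_{j+1/2}^n - \mathcal{F}_{j-1/2}^n)$, I rewrite
\[
A^{n+1} - A^n = -\Delta t \sum_{j \le j_c} w(x_j)\bigl(\mathcal{F}_{j+1/2}^n - \mathcal{F}_{j-1/2}^n\bigr).
\]
The compact support of $w$ (i.e.\ $w \equiv 0$ on $(-\infty, -\iw]$, cf.\ assumption \textbf{(W)}) ensures this sum has only finitely many nonzero terms, which justifies all the manipulations to follow.

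Next, I apply Abel summation by parts to obtain
\[
\sum_{j \le j_c} w(x_j)\bigl(\mathcal{F}_{j+1/2}^n - \mathcal{F}_{j-1/2}^n\bigr) = w(x_{j_c})\,\mathcal{F}_{j_c+1/2}^n - \sum_{j \le j_c - 1} \bigl(w(x_{j+1}) - w(x_j)\bigr)\,\mathcal{F}_{j+1/2}^n,
\]
the left-end boundary term vanishing by the compact support of $w$. By the $\L\infty$ stability~\eqref{stability.Linfty} (which is where the CFL condition~\eqref{CFL} enters) together with Lipschitz continuity and consistency of $F$, the numerical flux is uniformly bounded: $|\mathcal{F}_{j+1/2}^n| \le M_F$ for some constant $M_F$ depending only on $F$ and $R$ (since $F(0,0) = f(0) = 0$, one can take $M_F = 2R\,\lip(F)$). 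Since $w$ is non-decreasing and tends to $0$ at $-\infty$, the telescoping estimate
\[
\sum_{j \le j_c - 1} \bigl|w(x_{j+1}) - w(x_j)\bigr| = w(x_{j_c}) \le \norma{w}_{\L\infty(\reals_-)}
\]
holds. Combining these,
\[
|A^{n+1} - A^n| \le 2\,\norma{w}_{\L\infty(\reals_-)}\,M_F\,\Delta t.
\]

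Summing over $n = 0, 1, \ldots, N-1$ with $N = \lfloor T/\Delta t\rfloor$ gives
\[
\modulo{q_\Delta}_{BV([0,T])} = \sum_{n=0}^{N-1} |q^{n+1} - q^n| \le 2\,\lip(p)\,\norma{w}_{\L\infty(\reals_-)}\,M_F\,T,
\]
establishing~\eqref{estimation.bv.q}. The only nontrivial point is the discrete summation by parts, specifically identifying the correct telescoping structure on the $w$ side and discarding the boundary term at $-\infty$ via compact support; everything else is mechanical and follows from the $\L\infty$ stability and monotonicity already recorded above.
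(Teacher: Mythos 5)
Your argument is correct, and its skeleton is the same as the paper's: Lipschitz continuity of $p$ applied to the discrete average in~\eqref{contrainte.approx}, the scheme update to convert $\rho_j^{n+1}-\rho_j^n$ into flux differences, a discrete summation by parts against $w$, an $\L\infty$ bound on the numerical fluxes (this is where the CFL condition~\eqref{CFL} enters, via~\eqref{stability.Linfty}), and finally a sum over the time levels. The genuine difference is how the summation-by-parts remainder is treated. You telescope the weight increments using the monotonicity and compact support of $w$, $\sum_{j\le j_c-1}\modulo{w(x_{j+1})-w(x_j)}=w(x_{j_c})\le\norma{w}_{\L\infty(\reals_-)}$, which gives $\modulo{q^{n+1}-q^n}\le 2\,\lip(p)\,\norma{w}_{\L\infty(\reals_-)}\,M_F\,\Delta t$ and hence a final constant $2\,T\,\lip(p)\,\norma{w}_{\L\infty(\reals_-)}\,M_F$ that is independent of the mesh. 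The paper instead bounds each increment $w(x_{j+1})-w(x_j)$ crudely by $\norma{w}_{\L\infty(\reals_-)}$ and sums the fluxes over all cells meeting $\mathrm{supp}(w)$, ending with $C=(j_c-j_w+2)\,T\,R\,\lip(p)\,\norma{w}_{\L\infty(\reals_-)}\left(\lip(F)+\lip(f)\right)$; since $j_c-j_w$ grows like $\iw/\Delta x$, that constant is not uniform in $\Delta x$ as the statement of~\eqref{estimation.bv.q} (and the Helly compactness step in Theorem~\ref{th.convergence}) requires, so your sharper handling of the $w$-increments is in fact the cleaner route to a constant depending only on $T$, $F$, $p$, $w$, $R$. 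Two minor points you should make explicit: the estimate $\modulo{\mathcal{F}^n_{j_c+1/2}}\le\modulo{F(\rho^n_{j_c},\rho^n_{j_c+1})}$ at the constrained interface uses $q^n\ge0$, which follows from \textbf{(P)}, and the bound $M_F=2R\,\lip(F)$ is legitimate precisely because~\eqref{stability.Linfty} keeps every $\rho^n_j$ in $[0,R]$, which is where your appeal to the CFL condition is actually used.
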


\begin{proof}
Let $N=\lfloor{T/\Delta t}\rfloor$ and $j_w$ be an integer such that supp$(w)\subset\underset{j_w\le j\le j_c}\cup K_j$.
Then for any $n=0,\dots,N-1$, we have 
\begin{align*}
\modulo{q^{n+1}-q^n}& = \modulo{p\left(\Delta x\sum_{j_w\le j\le j_c}w(x_j)\rho_j^{n+1}\right)-p\left(\Delta x\sum_{j_w\le j\le j_c}w(x_j)\rho_j^n\right)}\\
&\le\Delta x\,\lip(p)\left|\sum_{j_{w}\le j\le j_c}w(x_j)(\rho_j^{n+1}-\rho_j^n)\right|
=\Delta t\,\lip(p)\left|\sum_{j_{w}\le j\le j_c}w(x_j)\left(\mathcal{F}_{j+1/2}^n-\mathcal{F}_{j-1/2}^n\right)\right|.
\end{align*}
Now, using a summation by part, we have
\begin{align*}
\sum_{j_{w}\le j\le j_c}w(x_j)\left(\mathcal{F}_{j+1/2}^n-\mathcal{F}_{j-1/2}^n\right)
=w(x_{j_c})\mathcal{F}_{j_c+1/2}^n-w(x_{j_w})\mathcal{F}_{j_w-1/2} - \!\!\!\sum_{j_w\le j\le j_c-1}\left(w(x_{j+1})-w(x_j)\right)\mathcal{F}_{j+1/2}^n.
\end{align*}
Then, it follows that 
$$|q^{n+1}-q^n|\le\Delta t\,\lip(p)\,\|w\|_{\L\infty(\reals_-;\reals)}\sum_{j_w-1\le j\le j_c}|\mathcal{F}_{j+1/2}^n|.$$
Now, from~\eqref{def.flux.num.contrainte}, for any $j\in\integers$ we have the estimate
\begin{align*}
\modulo{\mathcal{F}_{j+1/2}^n} & \le \modulo{F(\rho_{j}^n,\rho_{j+1}^n)}
\le \modulo{F(\rho_{j}^n,\rho_{j+1}^n)-F(\rho_{j}^n,\rho_{j}^n)} + \modulo{f(\rho_{j}^n)}
\le \lip(F) \, \modulo{\rho_{j+1}^n-\rho_{j}^n}+\lip(f)\, \modulo{\rho_{j}^n}
\le R\left (\lip(F)+\lip(f)\right ).
\end{align*}
Hence we deduce that 
$$\modulo{q_\Delta}_{BV([0,T])}=\sum_{n=0}^{N-1}\modulo{q^{n+1}-q^n}\le C,$$
where $C=(j_c-j_w+2)\,T\,R\,\lip(p)\,\norma{w}_{\L\infty(\reals_-;\reals)}\,\left (\lip(F)+\lip(f)\right )$.
\end{proof}

We are now in a position to prove a convergence result for the scheme ~\eqref{schema}-\eqref{def.flux.num.contrainte}-\eqref{contrainte.approx}.

\begin{thm}
\label{th.convergence}
Under the CFL condition~\eqref{CFL}, the constrainted finite volume scheme~\eqref{schema}-\eqref{def.flux.num.contrainte}-\eqref{contrainte.approx} converges in $\L1(Q)$ to the unique entropy solution to~\eqref{eq:constrianed}.
\end{thm}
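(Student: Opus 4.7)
The plan is a compactness-uniqueness argument that decouples the non-locality from the standard constrained conservation law. I first invoke Proposition~\ref{prop.estimation.bv.q} together with the obvious bound $0\le q_\Delta\le f(\sigma)$ coming from hypothesis~\textbf{(P)} and~\eqref{contrainte.approx} to conclude that the family $\{q_\Delta\}$ is uniformly bounded in $BV([0,T])\cap\L\infty([0,T])$. Helly's selection theorem then yields a subsequence, not relabelled, and a function $q^\star\in BV([0,T])$ such that $q_\Delta\to q^\star$ in $\L1([0,T])$ and a.e.

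The second step is to freeze the constraint. Let $\tilde{q}_\Delta$ be a natural piecewise-constant-in-time discretization of $q^\star$ (e.g.~its cellwise average on $[t^n,t^{n+1}[$) satisfying $\|\tilde{q}_\Delta-q^\star\|_{\L1([0,T])}\to0$, and let $\tilde{\rho}_\Delta$ denote the output of the scheme~\eqref{schema}-\eqref{def.flux.num.contrainte} applied to~\eqref{eq:constrianed1}-\eqref{eq:constrianed3}-\eqref{eq:constrianed2bis} with this fixed discrete constraint. By the convergence result for the locally constrained scheme established in~\cite{scontrainte}, $\tilde{\rho}_\Delta\to\tilde{\rho}$ in $\L1(Q)$, where $\tilde{\rho}$ is the unique entropy solution of~\eqref{eq:constrianed1}-\eqref{eq:constrianed3}-\eqref{eq:constrianed2bis} corresponding to $q^\star$. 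Combining this with Proposition~\ref{pro.diff.solutions} and the triangle inequality,
\begin{align*}
\|\rho_\Delta-\tilde{\rho}\|_{\L1(Q)}\le 2T\bigl(\|q_\Delta-q^\star\|_{\L1([0,T])}+\|\tilde{q}_\Delta-q^\star\|_{\L1([0,T])}\bigr)+\|\tilde{\rho}_\Delta-\tilde{\rho}\|_{\L1(Q)},
\end{align*}
so that the right-hand side tends to $0$ as $\Delta\to0$ and $\rho_\Delta\to\tilde{\rho}$ in $\L1(Q)$ along the extracted subsequence.

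The crux of the proof is the identification $q^\star(t)=p\bigl(\int_{\reals_-}w(x)\,\tilde{\rho}(t,x)\,\d x\bigr)$. For every $n$, the Riemann sum $\Delta x\sum_{j\le j_c}w(x_j)\rho_j^n$ differs from $\int_{\reals_-}w(x)\rho_\Delta(t^n,x)\,\d x$ by at most $R\,\iw\,\omega_w(\Delta x)$, where $\omega_w$ is the modulus of continuity of $w$ on its compact support $[-\iw,0]$; this discrepancy vanishes uniformly in $t$ as $\Delta x\to0$. Since $\rho_\Delta\to\tilde{\rho}$ in $\L1(Q)$ and $w$ is bounded with compact support, the map $t\mapsto\int_{\reals_-}w(x)\rho_\Delta(t,x)\,\d x$ converges in $\L1([0,T])$ to $t\mapsto\int_{\reals_-}w(x)\tilde{\rho}(t,x)\,\d x$, by Fubini. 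The Lipschitz continuity of $p$ then transfers this convergence to $q_\Delta\to p\bigl(\int_{\reals_-}w\,\tilde{\rho}\,\d x\bigr)$ in $\L1([0,T])$; uniqueness of $\L1$-limits identifies this limit with $q^\star$, and consequently $\tilde{\rho}$ satisfies the non-local constraint~\eqref{eq:constrianed2}.

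Therefore $\tilde{\rho}$ is an entropy solution of the full problem~\eqref{eq:constrianed} in the sense of~\cite{BorisCarlottaMax-M3AS}; Theorem~\ref{thm:1} provides uniqueness of such a solution, so every convergent subsequence of $\rho_\Delta$ must share the same limit $\rho$, and the whole family $\rho_\Delta$ converges to $\rho$ in $\L1(Q)$. I expect the identification step above to be the main obstacle, since it intertwines a quadrature estimate with passage to the limit inside the nonlinear map $p$; it is precisely here that continuity of $w$ (announced at the end of Section~\ref{sec:model} as a reinforcement of the minimal $\L\infty$ hypothesis~\textbf{(W)}) plays a decisive role.
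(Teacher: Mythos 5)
Your proposal is correct and follows essentially the same route as the paper: compactness of $q_\Delta$ via Proposition~\ref{prop.estimation.bv.q} and Helly's theorem, then a splitting through an auxiliary run of the scheme with a frozen piecewise-constant constraint, controlled by Proposition~\ref{pro.diff.solutions} together with the convergence result for the locally constrained scheme of~\cite{scontrainte}. You in fact spell out two points the paper only asserts in passing, namely the identification $q^\star=p\bigl(\int_{\reals_-}w(x)\,\tilde{\rho}(t,x)\,\d x\bigr)$ through the quadrature estimate exploiting the continuity of $w$, and the appeal to the uniqueness statement of Theorem~\ref{thm:1} to conclude that all subsequential limits coincide and the whole family converges.
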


\begin{proof}
Let $(\rho_\Delta,q_\Delta)$ be constructed by the scheme~\eqref{schema}-\eqref{def.flux.num.contrainte}-\eqref{contrainte.approx}. 
Proposition~\ref{prop.estimation.bv.q} and Helly's lemma give the existence of a subsequence, still denoted $q_\Delta$ and a constraint function $q\in \L\infty([0,T])$ such that $q_\Delta$ converges to $q$ strongly in $\L1([0,T])$ as $\Delta t\to0$. Let $\rho\in \L\infty(\reals_+\times\reals;[0,R])$ be the unique entropy solution to~\eqref{eq:constrianed1}-\eqref{eq:constrianed3}-\eqref{eq:constrianed2bis} associated to $q$. 
It remains to prove that the subsequence $\rho_\Delta$ converges to $\rho$ strongly in $\L1(Q)$ as $\Delta t,\,\Delta x\to0$. The uniqueness of the entropy solution to~\eqref{eq:constrianed1}-\eqref{eq:constrianed3}-\eqref{eq:constrianed2bis} will then imply that the full sequence $\rho_\Delta$ converges to $\rho$ and, as a consequence, the full sequence $q_\Delta$ converges to $q=p\left (\int_{\reals_-}w(x)\,\rho(t,x)\,\d x\right )$.\\ 
Let $\hat{q}_\Delta$ be a piecewise constant approximation of $q$ such that $\hat{q}_\Delta$ converges to $q$ strongly in $\L1([0,T])$. Furthermore, we also introduce $\hat{\rho}_\Delta$ constructed by the scheme~\eqref{schema}-\eqref{def.flux.num.contrainte} and associated to $\hat{q}_\Delta$. Now we have
$$\norma{\rho-\rho_\Delta}_{\L1(Q)}\le\norma{\rho-\hat{\rho}_\Delta}_{\L1(Q)}+\norma{\rho_\Delta-\hat{\rho}_\Delta}_{\L1(Q)}. $$ 
But, thanks to~\cite[Theorem~4.9]{scontrainte}, under the CFL condition~\eqref{CFL}, $\norma{\rho-\hat{\rho}_\Delta}_{\L1(Q)}$ tends to $0$ as $\Delta t$, $\Delta x\to0$. Furthermore, thanks to Proposition~\ref{pro.diff.solutions}, we have 
$$\norma{\rho_\Delta-\hat{\rho}_\Delta}_{\L1(Q)}\le2 ~ T~ \norma{q_\Delta-\hat{q}_\Delta}_{\L1([0,T])}$$
which also shows that $\norma{\rho_\Delta-\hat{\rho}_\Delta}_{\L1(Q)}$ tends to $0$ as $\Delta t$, $\Delta x\to0$. 
\end{proof}

\subsection{Validation of the numerical scheme}\label{sec:validation}

\begin{figure}[htpb]\tiny
\centering
        {\includegraphics[width=0.25\textwidth]{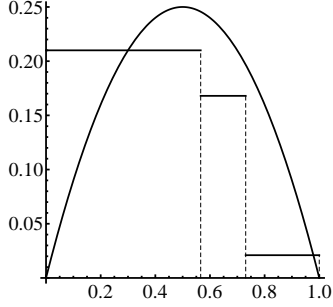}}
\caption{The functions ${[\rho \mapsto f(\rho)]}$ and ${[\xi \mapsto p(\xi)]}$ as in Section~\ref{sec:validation}.}
\label{fig:validation1}
\end{figure}


\begin{figure}
\centering
\begin{subfigure}[The solution in the $(t,x,\rho)$-coordinates.]
    {\includegraphics[width=0.49\textwidth]{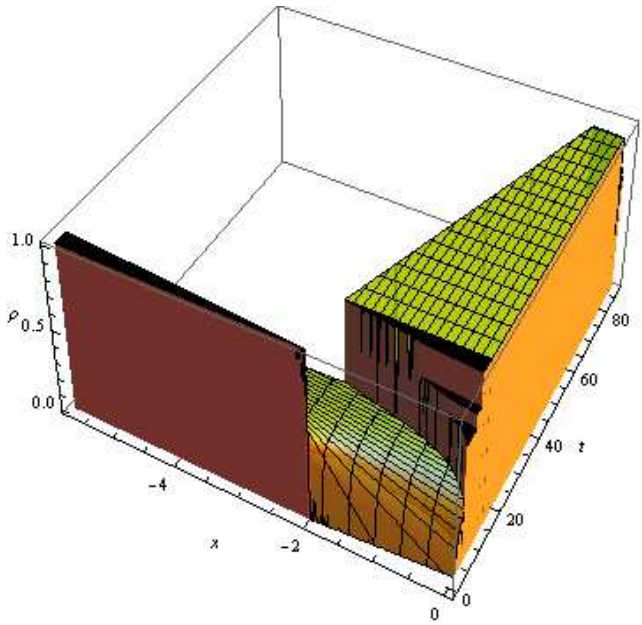}
    \label{fig:gull1}}
\end{subfigure}\hfill
\begin{subfigure}[The solution in the $(x,t)$-coordinates.]
    {\includegraphics[width=0.49\textwidth]{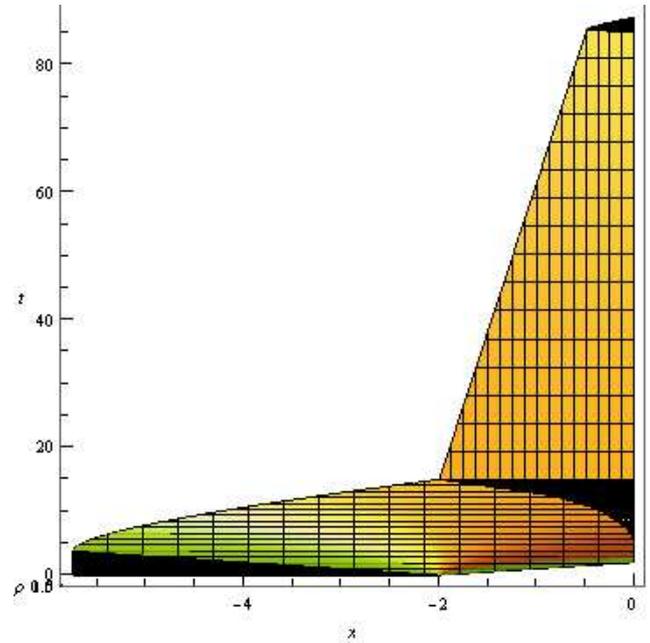}
    \label{fig:gull2}}
\end{subfigure}\\
\begin{subfigure}[The solution in the $(t,x,\rho)$-coordinates for $0 ~\le ~t \le~15$.]
    {\includegraphics[width=0.49\textwidth]{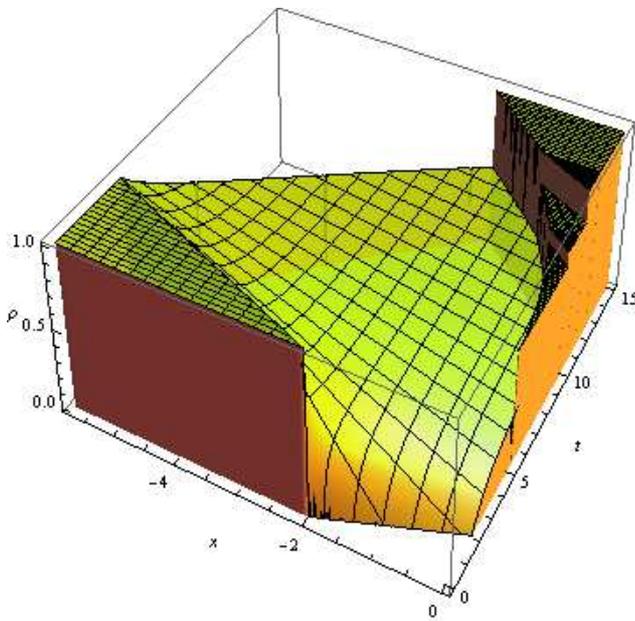}
    \label{fig:gull3}}
\end{subfigure}\hfill
\begin{subfigure}[The solution in the $(t,x,\rho)$-coordinates for $85 ~\le ~t ~\le ~87.5$.]
    {\includegraphics[width=0.49\textwidth]{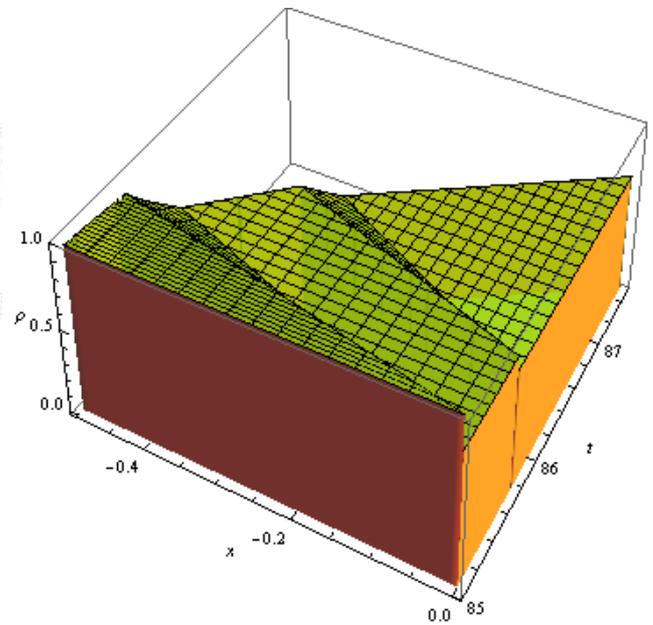}
    \label{fig:gull4}}
\end{subfigure}
\caption{Representation of the solution constructed in~\cite[Section~6]{BorisCarlottaMax-M3AS} and described in Subsection \ref{sec:validation}.}\label{fig:validation2}
\end{figure}

We propose here to validate the numerical scheme~\eqref{schema}-\eqref{def.flux.num.contrainte}-\eqref{contrainte.approx} using the Godounov numerical flux (see e.g.~\cite{GodlewskiRaviartBook, LevequeBook}) which will be used in the remaining of this paper:
\begin{eqnarray*}
    F(a,b) &= \left\{
    \begin{array}{l@{\quad\text{ if }}l}
       \underset{[a,b]}\min f & a\le b,\\
        \underset{[b,a]}\max f & a>b.
    \end{array}
    \right.
\end{eqnarray*}

\noindent We consider the explicit solution to~\eqref{eq:constrianed} constructed in~\cite[Section~6]{BorisCarlottaMax-M3AS} by applying the wave front tracking algorithm. The set up for the simulation is as follows. Consider the domain of computation $[-6,1]$, take  a normalized flux $f(\rho) = \rho(1-\rho)$ (namely the maximal velocity and the maximal density are assumed to be equal to one) and a linear weight function $w(x) = 2 (1 + x)\,\chi_{[-1, 0]}(x)$. Assume a uniform distribution of maximal density in $[x_A, x_B]$ at time $t=0$, namely $\bar\rho = \chi_{[x_A, x_B]}$. The efficiency of the exit, $p$, see Figure~\ref{fig:validation1}, is of the form
\begin{eqnarray*}
    p(\xi) &= \left\{
    \begin{array}{l@{\quad\text{ if }}l}
       p_0 & 0\le\xi<\xi_1,\\
       p_1 & \xi_1\le\xi<\xi_2,\\
       p_2 & \xi_2\le\xi\le 1.
    \end{array}
    \right.
\end{eqnarray*}
The explicit solution $\rho$ corresponding to the values
\begin{align*}
    &p_0 =0.21, && p_1 =0.168, && p_2 =0.021, && \xi_1 \sim 0.566,
    &x_A = -5.75, && x_B =-2, && \xi_2 \sim 0.731,
\end{align*}
is represented in Figure~\ref{fig:validation2}. The above choices for the flux $f$ and the efficiency $p$ ensure that the solution to each Riemann problem is unique, see~\cite{AndreianovDonadelloRosiniRazafisonProc}. We defer to~\cite[Section~6]{BorisCarlottaMax-M3AS} for the details of the construction of the solution $\rho$ and its physical interpretation.
\begin{figure}
\begin{subfigure}[$\rho_\Delta(0,x)$]
    {\begin{psfrags}
    \psfrag{rho}[c,b]{}
    \psfrag{x}[c,t]{\tiny$x$}
        \includegraphics[width=0.22\textwidth]{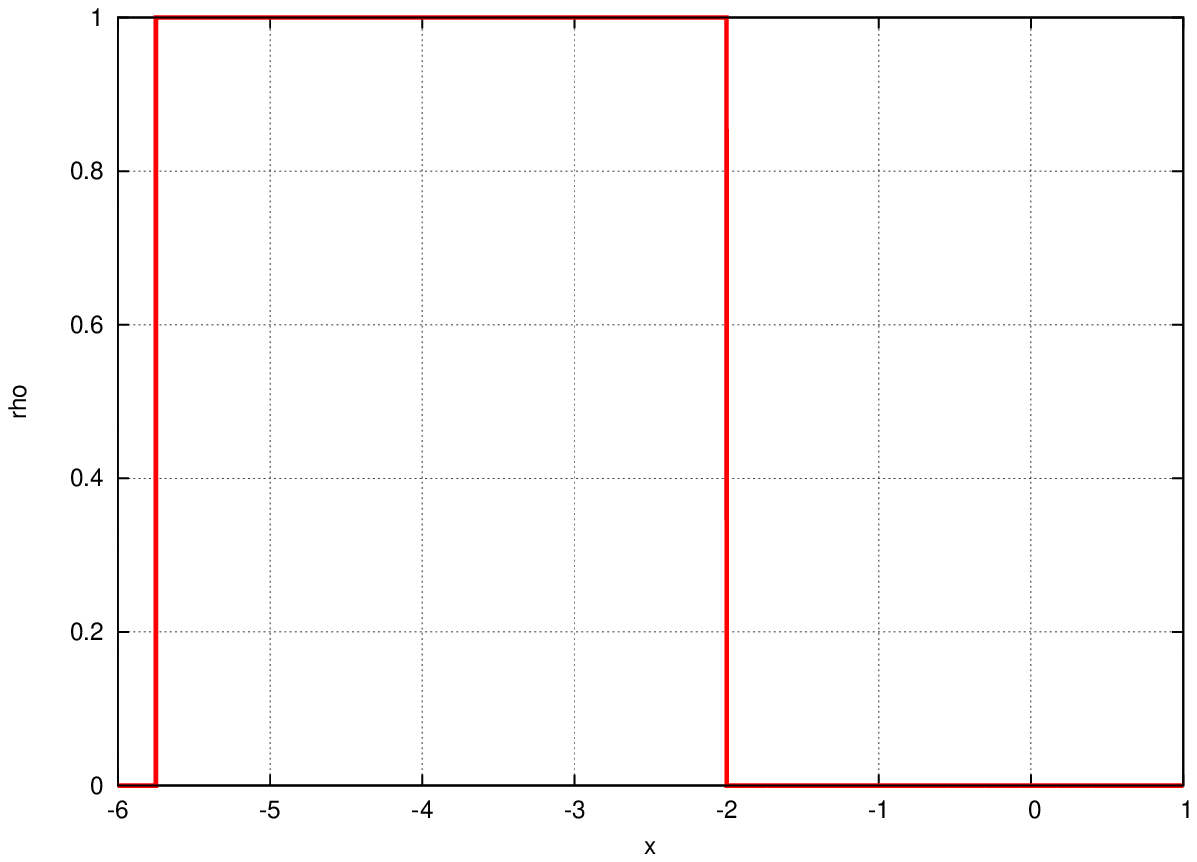}
    \end{psfrags}\label{fig:tiger1}}
\end{subfigure}~
\begin{subfigure}[$\rho_\Delta(1,x)$]
    {\begin{psfrags}
    \psfrag{rho}[c,b]{}
    \psfrag{x}[c,t]{\tiny$x$}
        \includegraphics[width=0.22\textwidth]{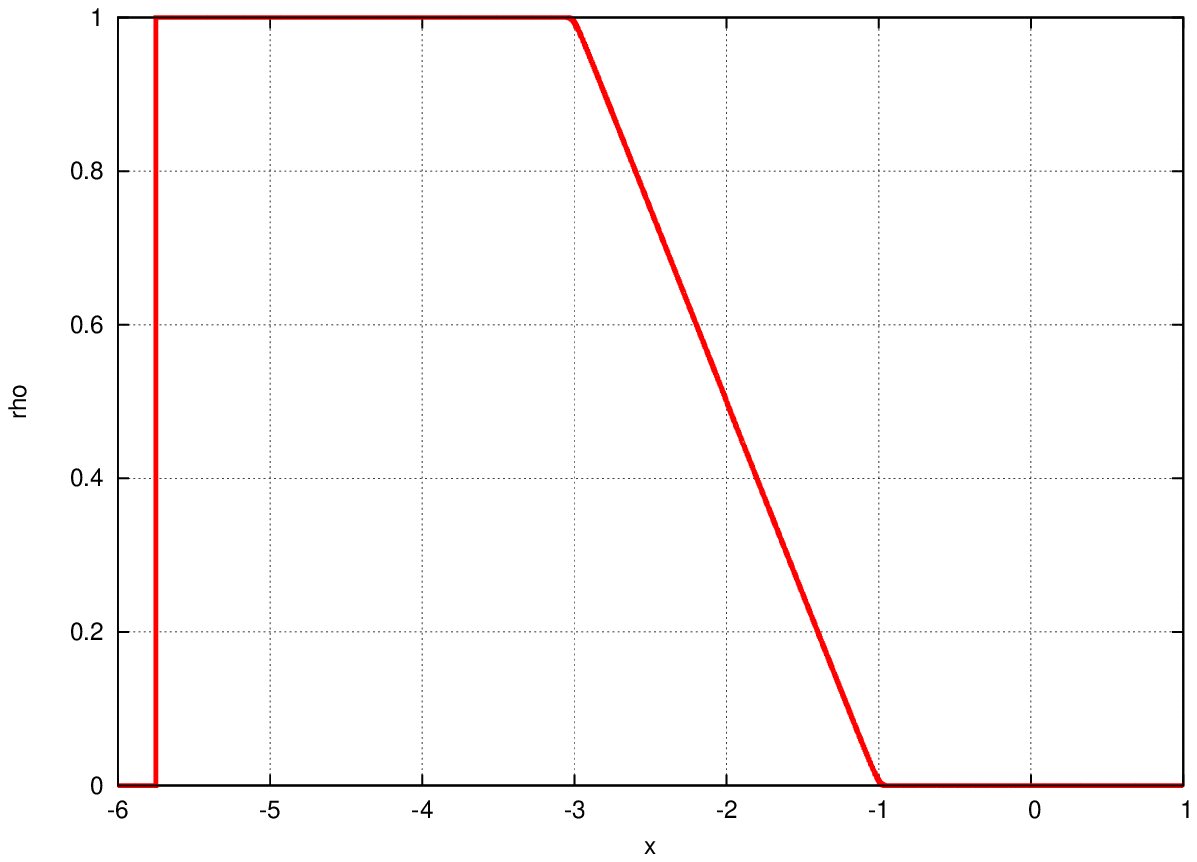}
    \end{psfrags}\label{fig:tiger2}}
\end{subfigure}~
\begin{subfigure}[$\rho_\Delta(7.325,x)$]
    {\begin{psfrags}
    \psfrag{rho}[c,b]{}
    \psfrag{x}[c,t]{\tiny$x$}
        \includegraphics[width=0.22\textwidth]{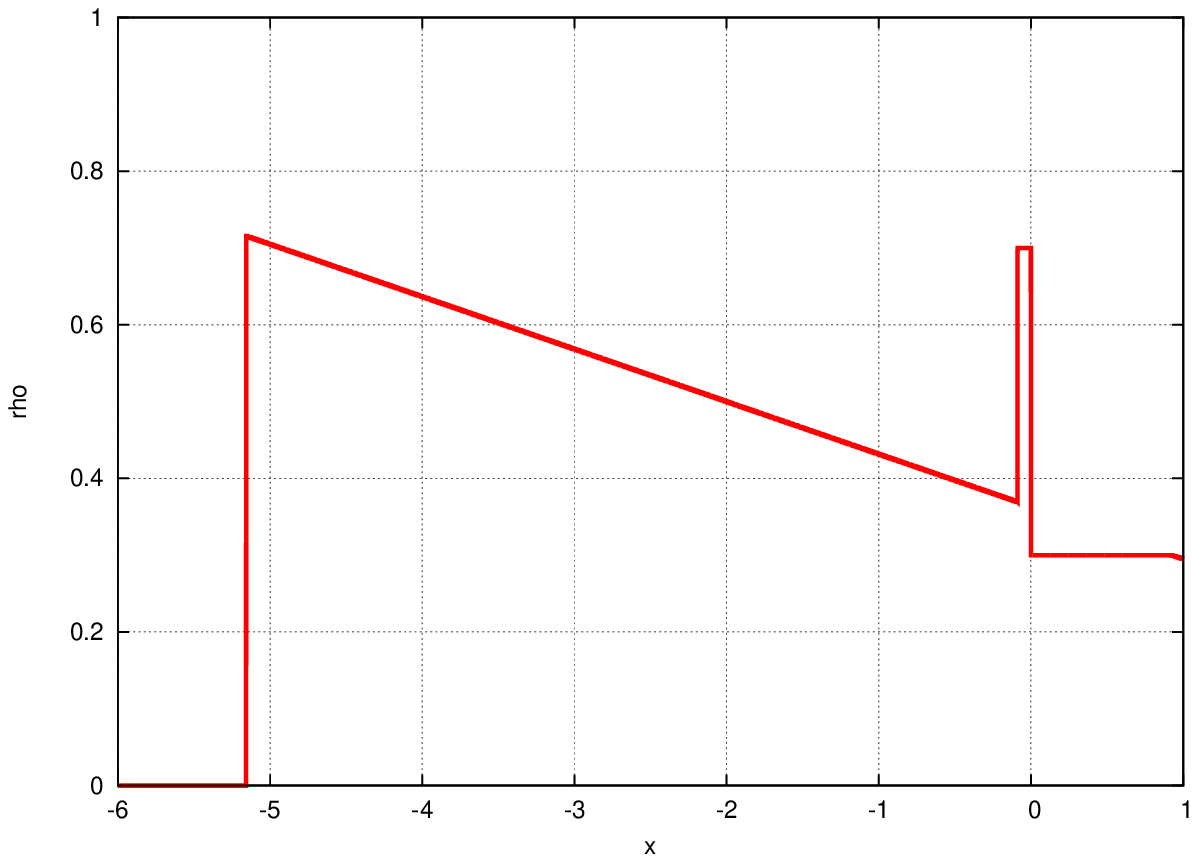}
    \end{psfrags}\label{fig:tiger3}}
\end{subfigure}~
\begin{subfigure}[$\rho_\Delta(10,x)$]
    {\begin{psfrags}
    \psfrag{rho}[c,b]{}
    \psfrag{x}[c,t]{\tiny$x$}
        \includegraphics[width=0.22\textwidth]{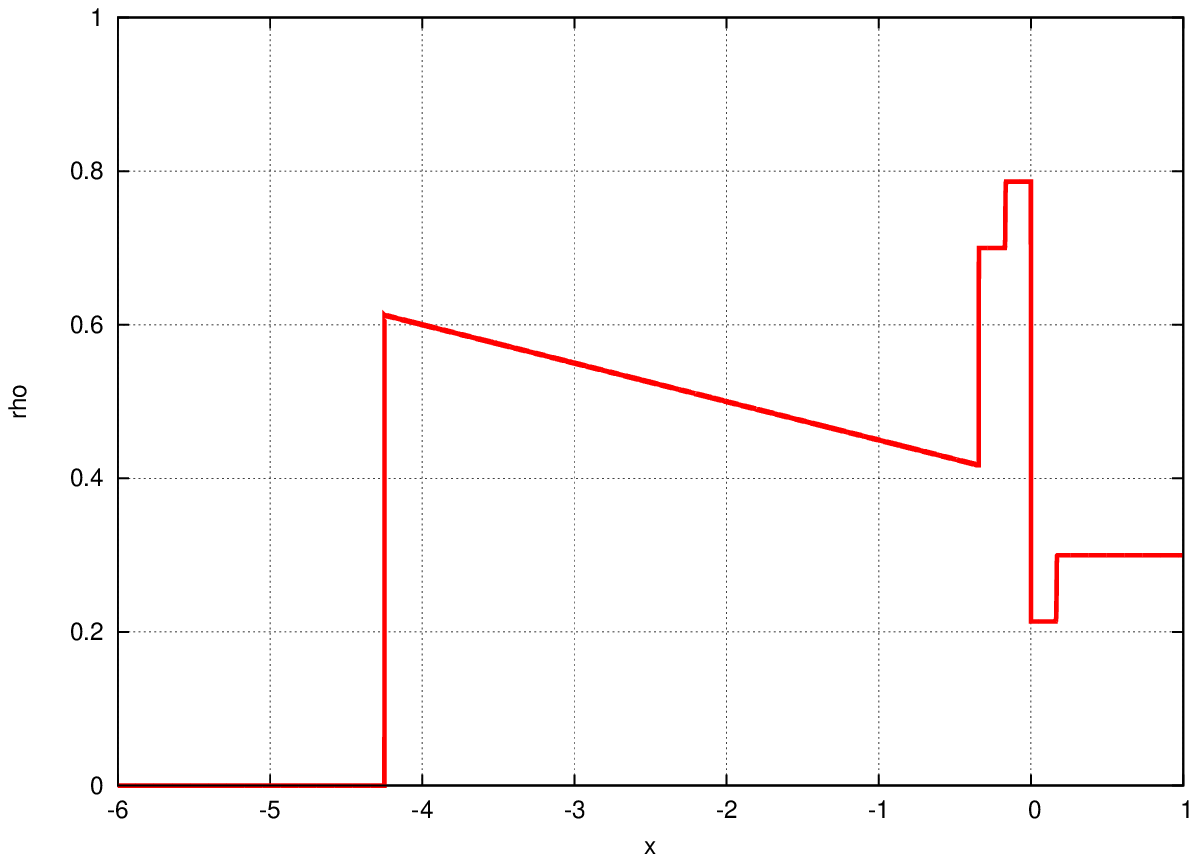}
    \end{psfrags}\label{fig:tiger4}}
\end{subfigure}\\
\begin{subfigure}[$\rho(0,x)$]
    {\begin{psfrags}
    \psfrag{r}[c,c]{}
    \psfrag{x}[c,t]{\tiny$\vphantom{\int}x$}
    \psfrag{z}[c,t]{\tiny$\vphantom{\int}1$}
    \psfrag{0}[c,t]{\tiny$\vphantom{\int}0$}
    \psfrag{1}[c,t]{\tiny$\vphantom{\int}-1$}
    \psfrag{2}[c,t]{\tiny$\vphantom{\int}-2$}
    \psfrag{3}[c,t]{\tiny$\vphantom{\int}-3$}
    \psfrag{4}[c,t]{\tiny$\vphantom{\int}-4$}
    \psfrag{5}[c,t]{\tiny$\vphantom{\int}-5$}
    \psfrag{6}[c,t]{\tiny$\vphantom{\int}-6$}
    \psfrag{a}[r,b]{\tiny$0.0$}
    \psfrag{b}[r,c]{\tiny$0.2$}
    \psfrag{c}[r,c]{\tiny$0.4$}
    \psfrag{d}[r,c]{\tiny$0.6$}
    \psfrag{e}[r,c]{\tiny$0.8$}
    \psfrag{f}[r,c]{\tiny$1.0$}
        \includegraphics[width=0.22\textwidth]{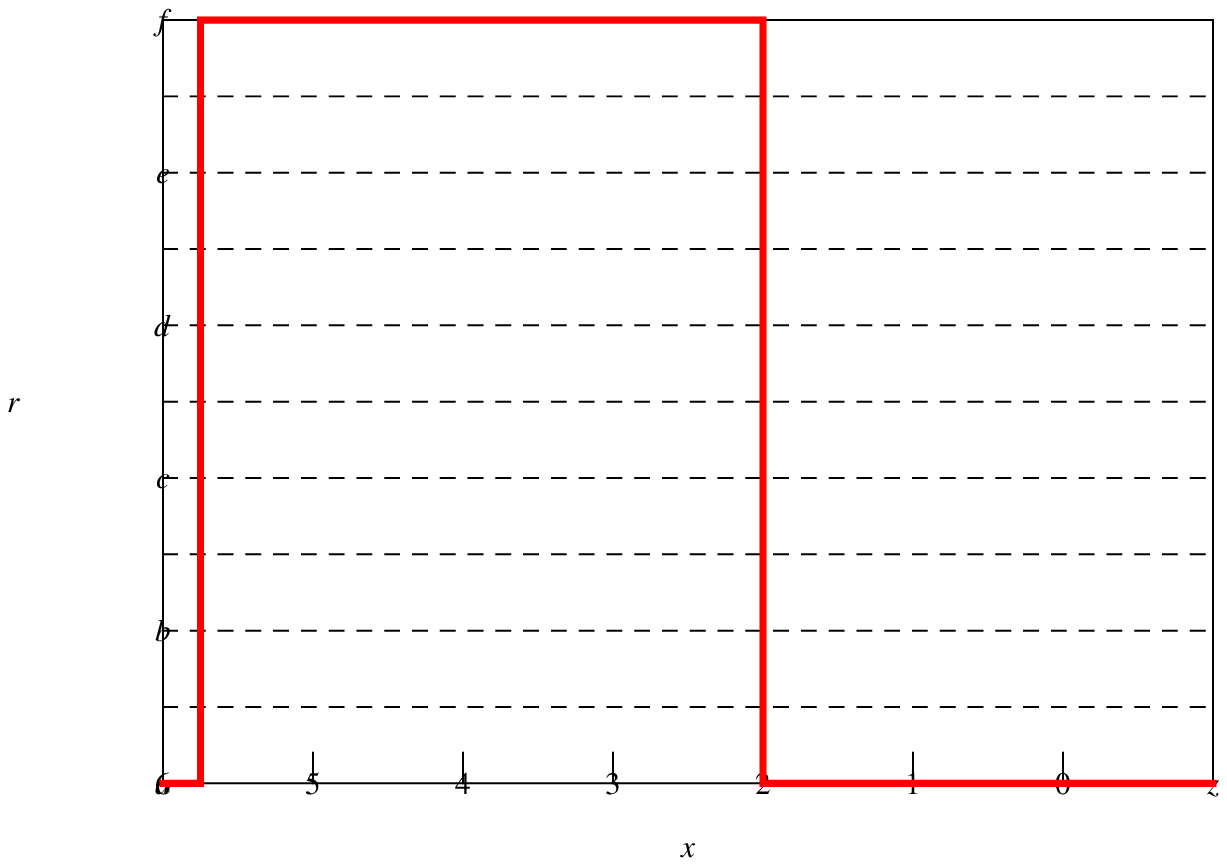}
    \end{psfrags}\label{fig:zebra1}}
\end{subfigure}~
\begin{subfigure}[$\rho(1,x)$]
    {\begin{psfrags}
    \psfrag{r}[c,c]{}
    \psfrag{x}[c,t]{\tiny$\vphantom{\int}x$}
    \psfrag{z}[c,t]{\tiny$\vphantom{\int}1$}
    \psfrag{0}[c,t]{\tiny$\vphantom{\int}0$}
    \psfrag{1}[c,t]{\tiny$\vphantom{\int}-1$}
    \psfrag{2}[c,t]{\tiny$\vphantom{\int}-2$}
    \psfrag{3}[c,t]{\tiny$\vphantom{\int}-3$}
    \psfrag{4}[c,t]{\tiny$\vphantom{\int}-4$}
    \psfrag{5}[c,t]{\tiny$\vphantom{\int}-5$}
    \psfrag{6}[c,t]{\tiny$\vphantom{\int}-6$}
    \psfrag{a}[r,b]{\tiny$0.0$}
    \psfrag{b}[r,c]{\tiny$0.2$}
    \psfrag{c}[r,c]{\tiny$0.4$}
    \psfrag{d}[r,c]{\tiny$0.6$}
    \psfrag{e}[r,c]{\tiny$0.8$}
    \psfrag{f}[r,c]{\tiny$1.0$}
        \includegraphics[width=0.22\textwidth]{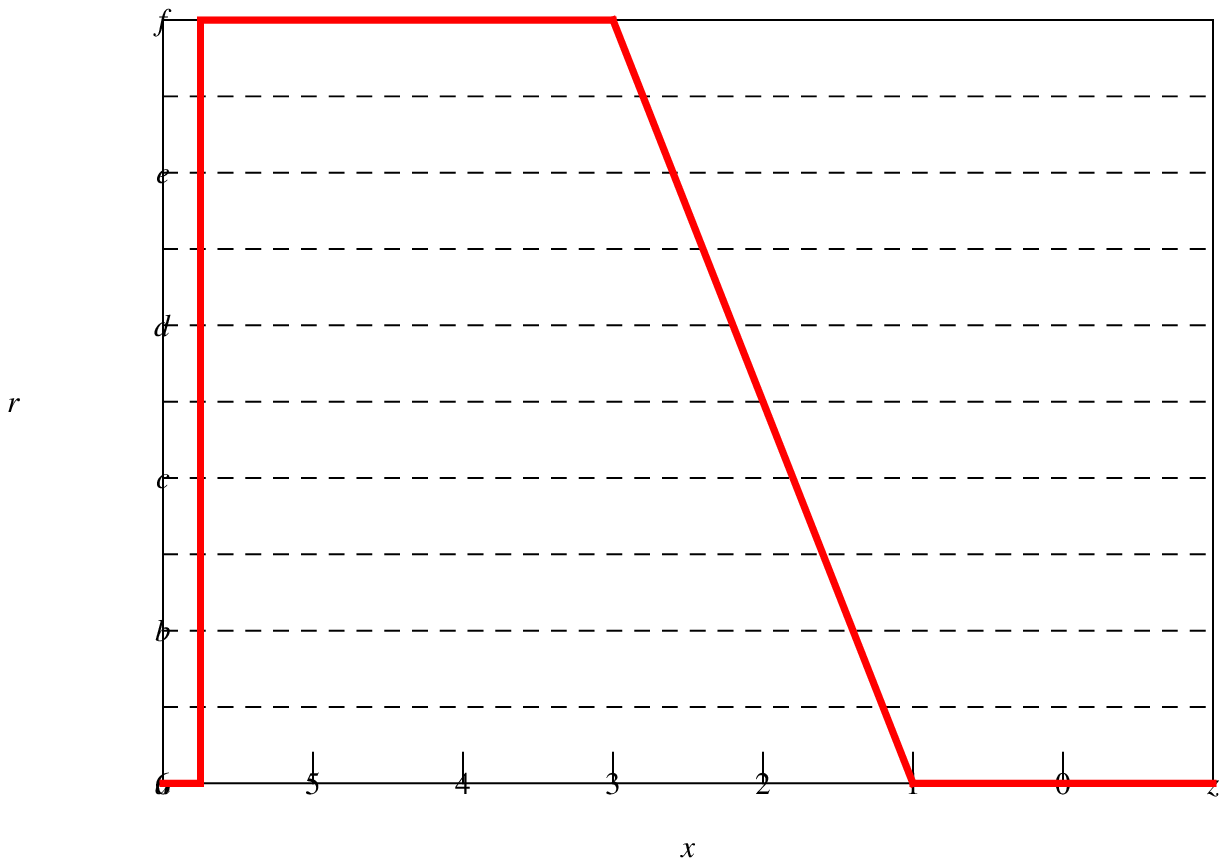}
    \end{psfrags}\label{fig:zebra2}}
\end{subfigure}~
\begin{subfigure}[$\rho(7.325,x)$]
    {\begin{psfrags}
    \psfrag{r}[c,c]{}
    \psfrag{x}[c,t]{\tiny$\vphantom{\int}x$}
    \psfrag{z}[c,t]{\tiny$\vphantom{\int}1$}
    \psfrag{0}[c,t]{\tiny$\vphantom{\int}0$}
    \psfrag{1}[c,t]{\tiny$\vphantom{\int}-1$}
    \psfrag{2}[c,t]{\tiny$\vphantom{\int}-2$}
    \psfrag{3}[c,t]{\tiny$\vphantom{\int}-3$}
    \psfrag{4}[c,t]{\tiny$\vphantom{\int}-4$}
    \psfrag{5}[c,t]{\tiny$\vphantom{\int}-5$}
    \psfrag{6}[c,t]{\tiny$\vphantom{\int}-6$}
    \psfrag{a}[r,b]{\tiny$0.0$}
    \psfrag{b}[r,c]{\tiny$0.2$}
    \psfrag{c}[r,c]{\tiny$0.4$}
    \psfrag{d}[r,c]{\tiny$0.6$}
    \psfrag{e}[r,c]{\tiny$0.8$}
    \psfrag{f}[r,c]{\tiny$1.0$}
        \includegraphics[width=0.22\textwidth]{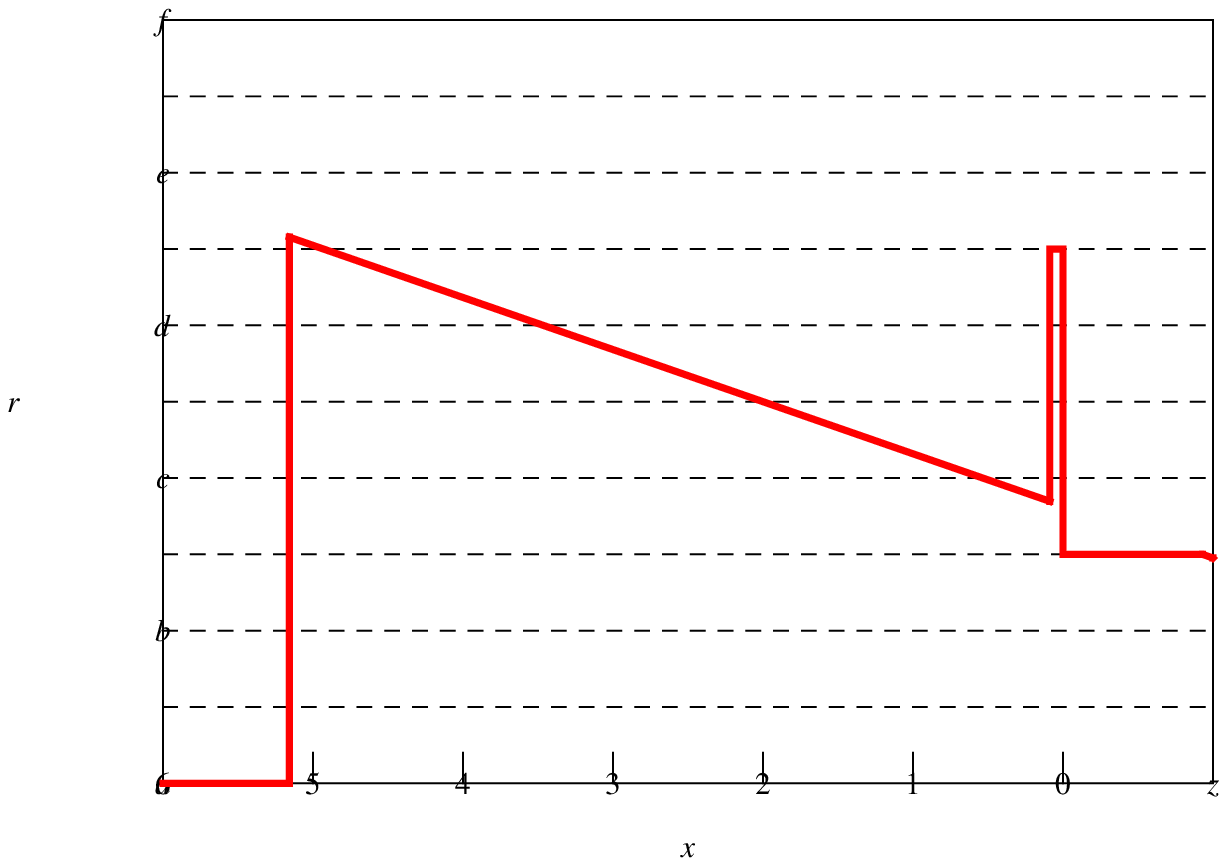}
    \end{psfrags}\label{fig:zebra3}}
\end{subfigure}~
\begin{subfigure}[$\rho(10,x)$]
    {\begin{psfrags}
    \psfrag{r}[c,c]{}
    \psfrag{x}[c,t]{\tiny$\vphantom{\int}x$}
    \psfrag{z}[c,t]{\tiny$\vphantom{\int}1$}
    \psfrag{0}[c,t]{\tiny$\vphantom{\int}0$}
    \psfrag{1}[c,t]{\tiny$\vphantom{\int}-1$}
    \psfrag{2}[c,t]{\tiny$\vphantom{\int}-2$}
    \psfrag{3}[c,t]{\tiny$\vphantom{\int}-3$}
    \psfrag{4}[c,t]{\tiny$\vphantom{\int}-4$}
    \psfrag{5}[c,t]{\tiny$\vphantom{\int}-5$}
    \psfrag{6}[c,t]{\tiny$\vphantom{\int}-6$}
    \psfrag{a}[r,b]{\tiny$0.0$}
    \psfrag{b}[r,c]{\tiny$0.2$}
    \psfrag{c}[r,c]{\tiny$0.4$}
    \psfrag{d}[r,c]{\tiny$0.6$}
    \psfrag{e}[r,c]{\tiny$0.8$}
    \psfrag{f}[r,c]{\tiny$1.0$}
        \includegraphics[width=0.22\textwidth]{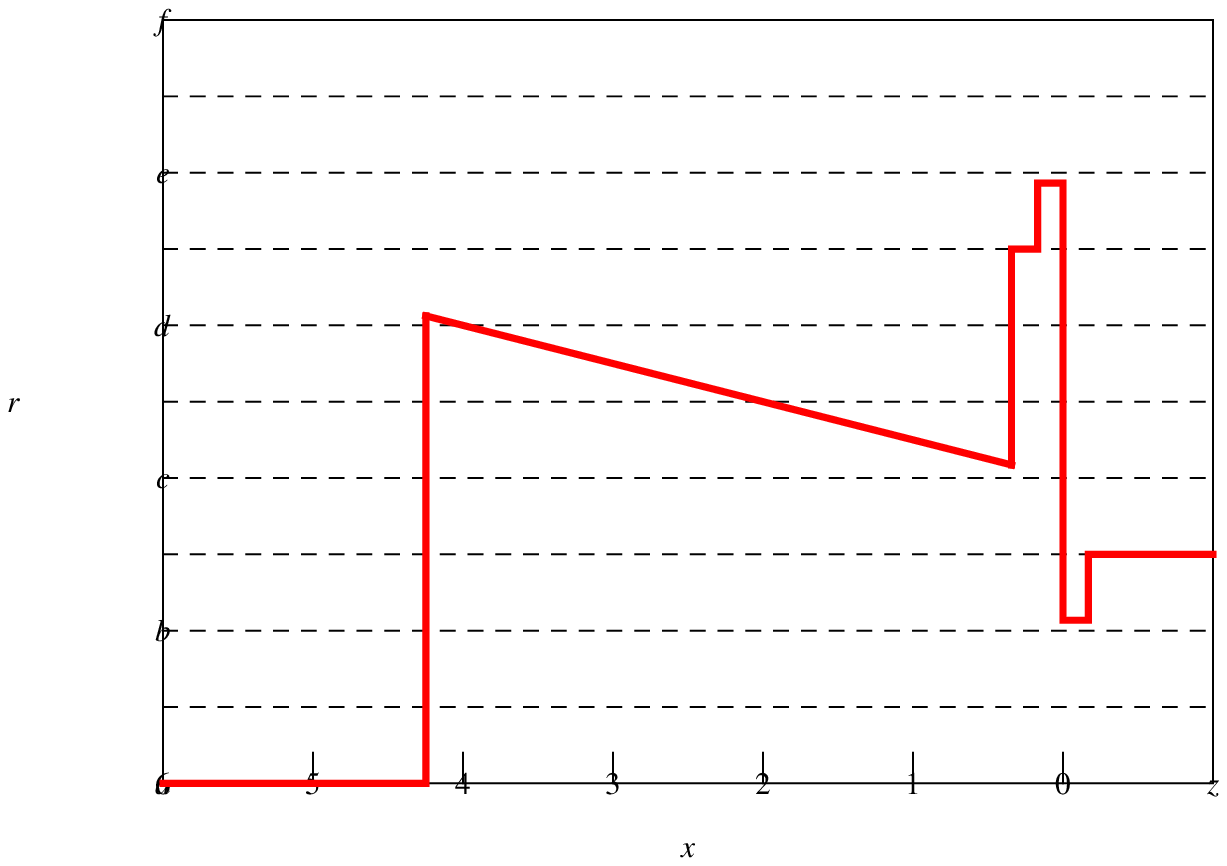}
    \end{psfrags}\label{fig:zebra4}}
\end{subfigure}\\
\begin{subfigure}[$\rho_\Delta(11.939,x)$]
    {\begin{psfrags}
    \psfrag{rho}[c,b]{}
    \psfrag{x}[c,t]{\tiny$x$}
        \includegraphics[width=0.22\textwidth]{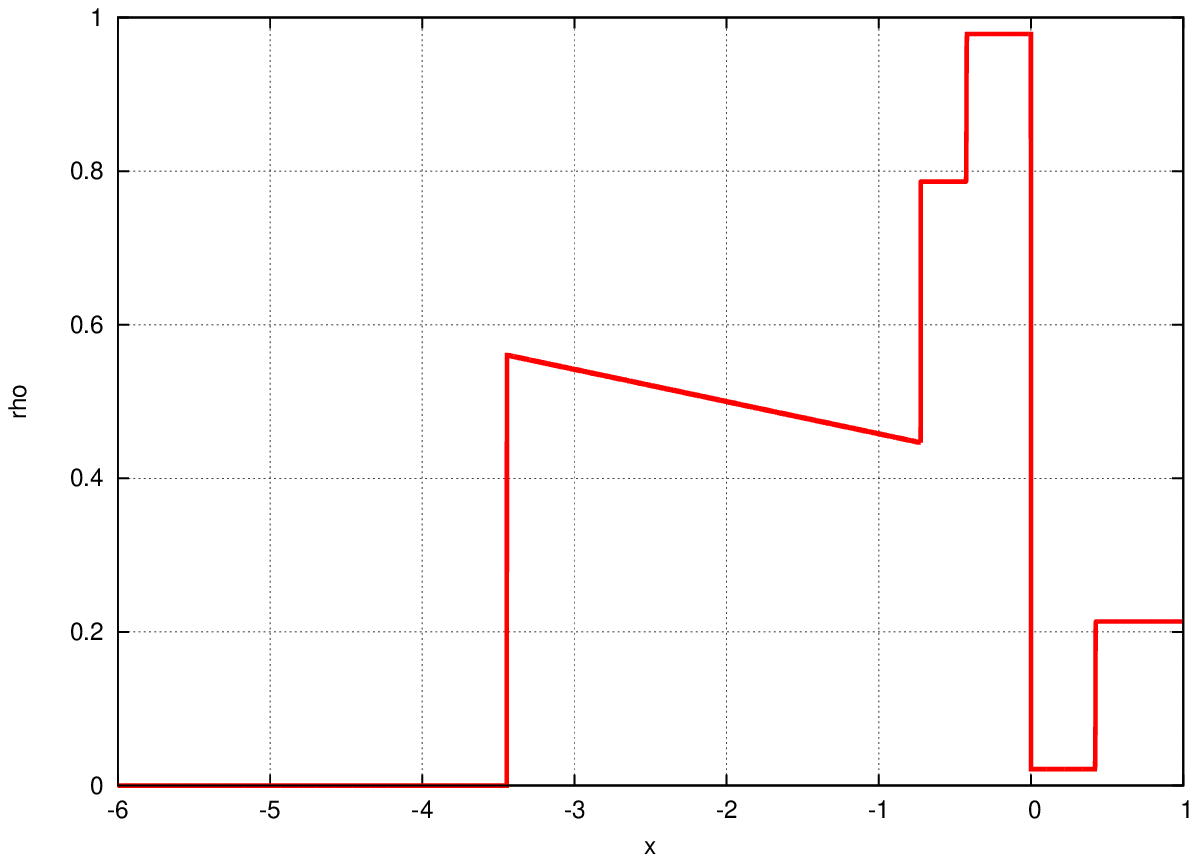}
    \end{psfrags}\label{fig:tiger5}}
\end{subfigure}~
\begin{subfigure}[$\rho_\Delta(85.2,x)$]
    {\begin{psfrags}
    \psfrag{rho}[c,b]{}
    \psfrag{x}[c,t]{\tiny$x$}
        \includegraphics[width=0.22\textwidth]{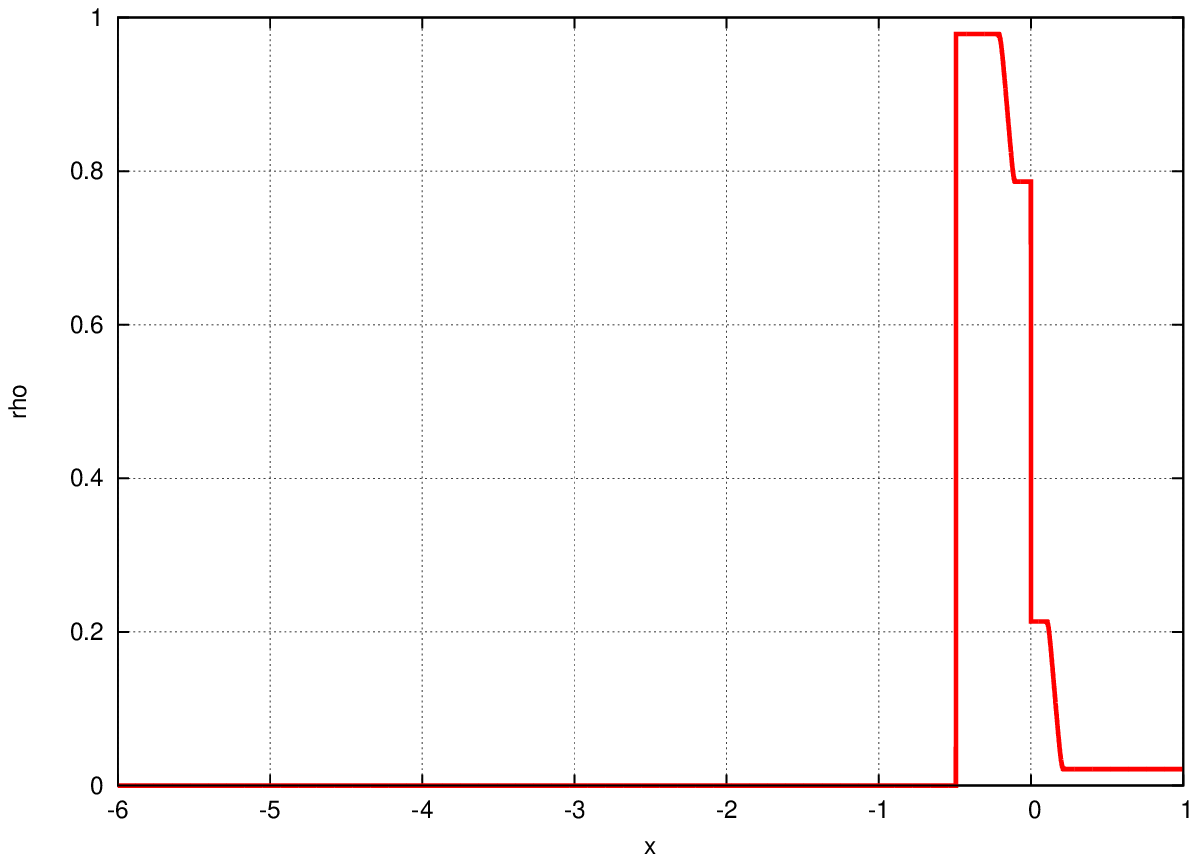}
    \end{psfrags}\label{fig:tiger6}}
\end{subfigure}~
\begin{subfigure}[$\rho_\Delta(85.5526,x)$]
    {\begin{psfrags}
    \psfrag{rho}[c,b]{}
    \psfrag{x}[c,t]{\tiny$x$}
        \includegraphics[width=0.22\textwidth]{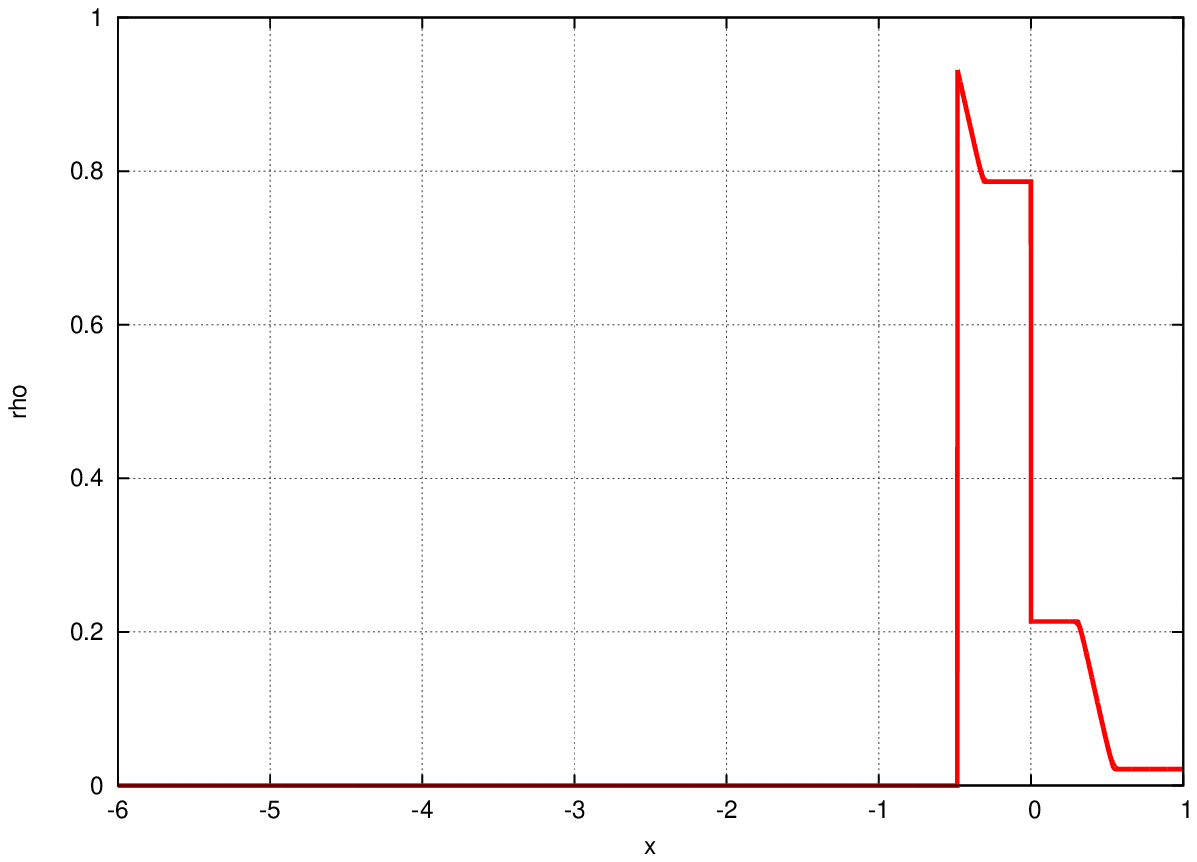}
    \end{psfrags}\label{fig:tiger7}}
\end{subfigure}~
\begin{subfigure}[$\rho_\Delta(87.4981,x)$]
    {\begin{psfrags}
    \psfrag{rho}[c,b]{}
    \psfrag{x}[c,t]{\tiny$x$}
        \includegraphics[width=0.22\textwidth]{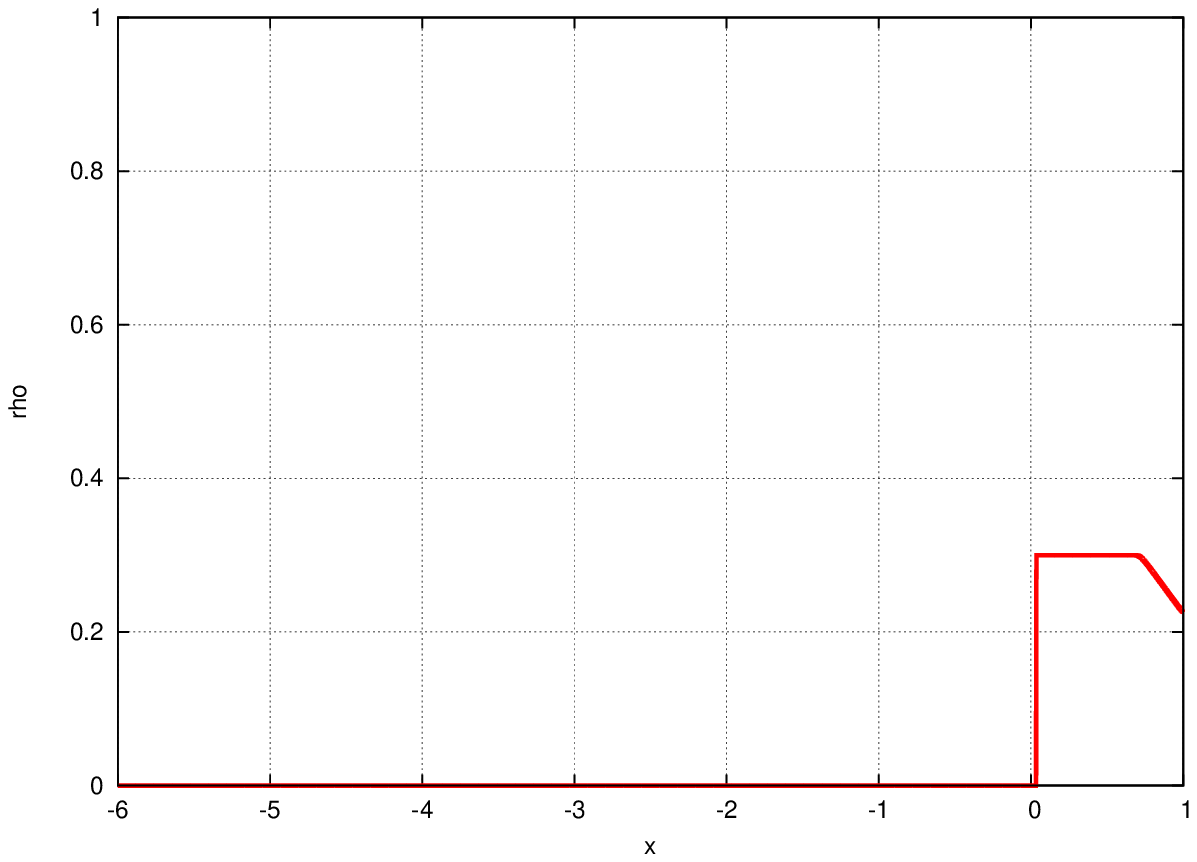}
    \end{psfrags}\label{fig:tiger8}}
\end{subfigure}\\
\begin{subfigure}[$\rho(11.939,x)$]
    {\begin{psfrags}
    \psfrag{r}[c,c]{}
    \psfrag{x}[c,t]{\tiny$\vphantom{\int}x$}
    \psfrag{z}[c,t]{\tiny$\vphantom{\int}1$}
    \psfrag{0}[c,t]{\tiny$\vphantom{\int}0$}
    \psfrag{1}[c,t]{\tiny$\vphantom{\int}-1$}
    \psfrag{2}[c,t]{\tiny$\vphantom{\int}-2$}
    \psfrag{3}[c,t]{\tiny$\vphantom{\int}-3$}
    \psfrag{4}[c,t]{\tiny$\vphantom{\int}-4$}
    \psfrag{5}[c,t]{\tiny$\vphantom{\int}-5$}
    \psfrag{6}[c,t]{\tiny$\vphantom{\int}-6$}
    \psfrag{a}[r,b]{\tiny$0.0$}
    \psfrag{b}[r,c]{\tiny$0.2$}
    \psfrag{c}[r,c]{\tiny$0.4$}
    \psfrag{d}[r,c]{\tiny$0.6$}
    \psfrag{e}[r,c]{\tiny$0.8$}
    \psfrag{f}[r,c]{\tiny$1.0$}
        \includegraphics[width=0.22\textwidth]{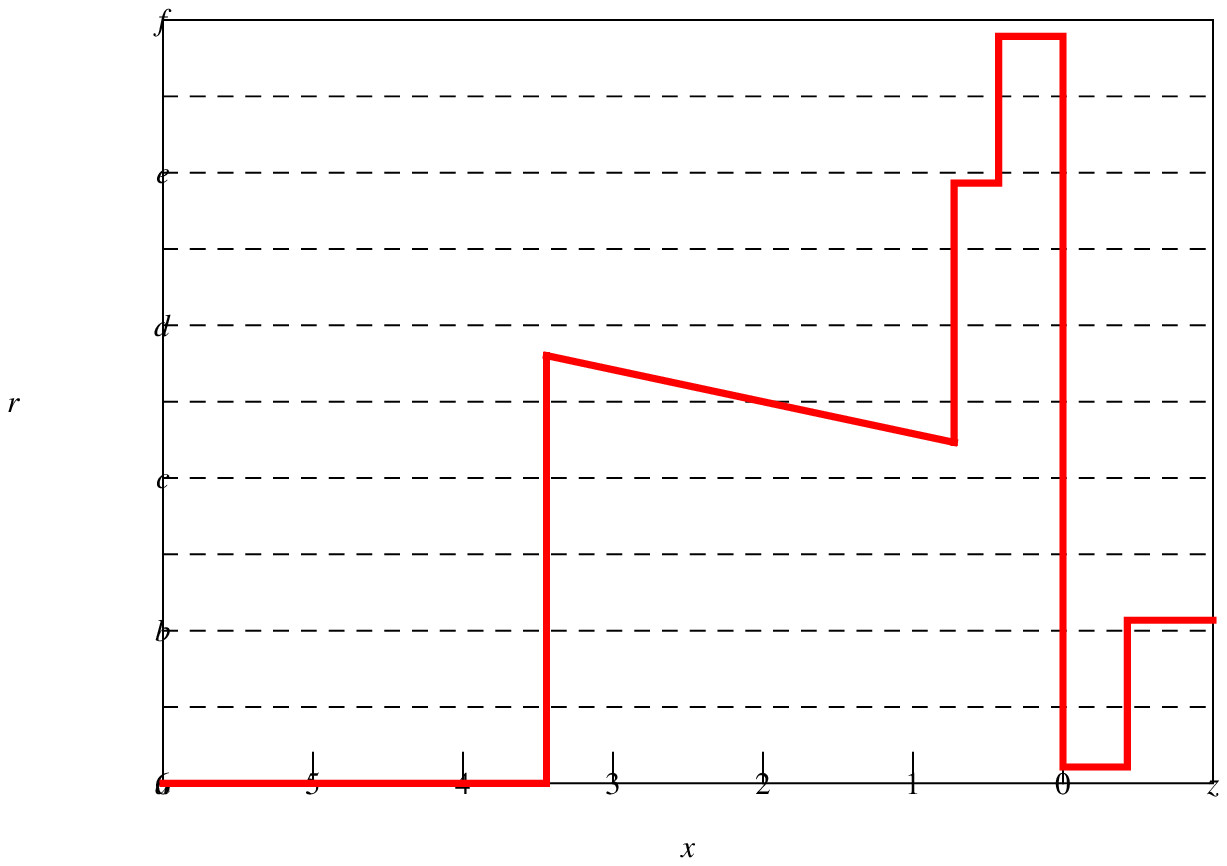}
    \end{psfrags}\label{fig:zebra5}}
\end{subfigure}~
\begin{subfigure}[$\rho(85.2,x)$]
    {\begin{psfrags}
    \psfrag{r}[c,c]{}
    \psfrag{x}[c,t]{\tiny$\vphantom{\int}x$}
    \psfrag{z}[c,t]{\tiny$\vphantom{\int}1$}
    \psfrag{0}[c,t]{\tiny$\vphantom{\int}0$}
    \psfrag{1}[c,t]{\tiny$\vphantom{\int}-1$}
    \psfrag{2}[c,t]{\tiny$\vphantom{\int}-2$}
    \psfrag{3}[c,t]{\tiny$\vphantom{\int}-3$}
    \psfrag{4}[c,t]{\tiny$\vphantom{\int}-4$}
    \psfrag{5}[c,t]{\tiny$\vphantom{\int}-5$}
    \psfrag{6}[c,t]{\tiny$\vphantom{\int}-6$}
    \psfrag{a}[r,b]{\tiny$0.0$}
    \psfrag{b}[r,c]{\tiny$0.2$}
    \psfrag{c}[r,c]{\tiny$0.4$}
    \psfrag{d}[r,c]{\tiny$0.6$}
    \psfrag{e}[r,c]{\tiny$0.8$}
    \psfrag{f}[r,c]{\tiny$1.0$}
        \includegraphics[width=0.22\textwidth]{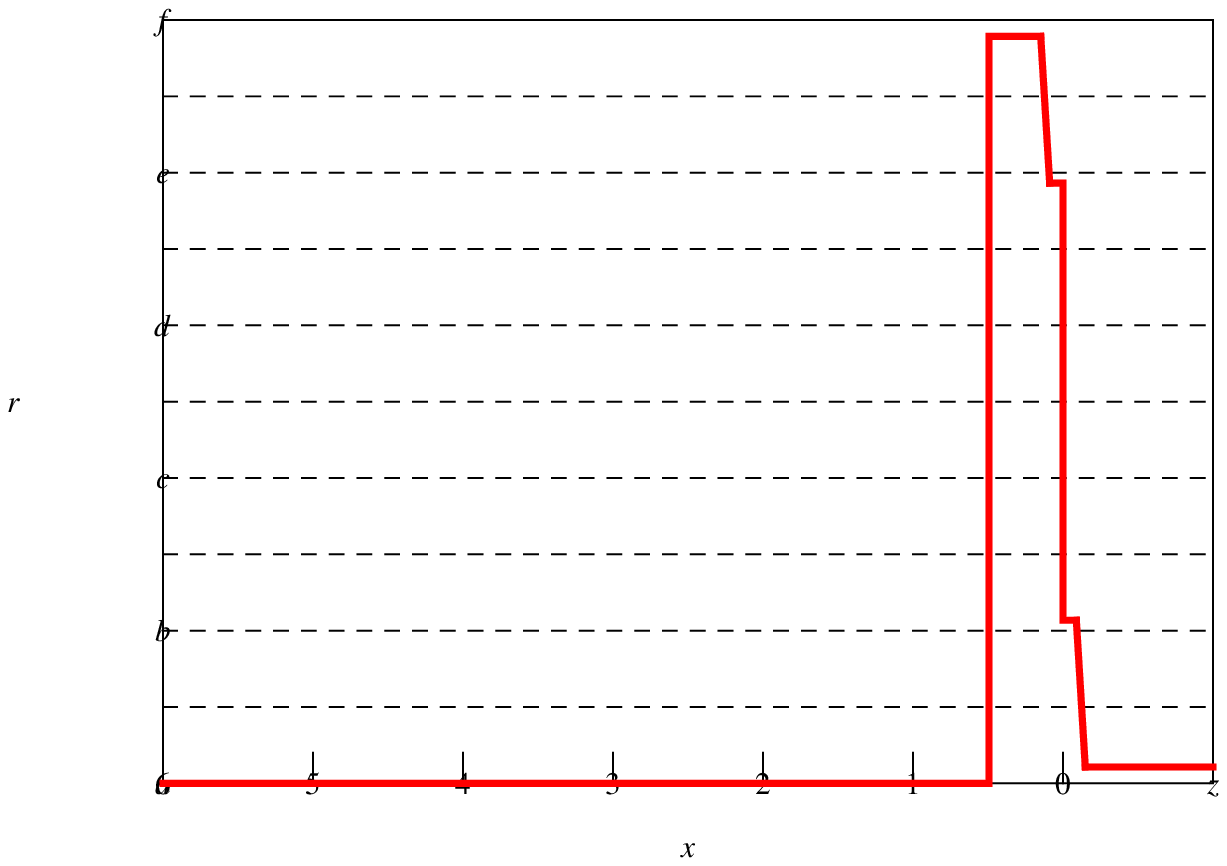}
    \end{psfrags}\label{fig:zebra6}}
\end{subfigure}~
\begin{subfigure}[$\rho(85.5526,x)$]
    {\begin{psfrags}
    \psfrag{r}[c,c]{}
    \psfrag{x}[c,t]{\tiny$\vphantom{\int}x$}
    \psfrag{z}[c,t]{\tiny$\vphantom{\int}1$}
    \psfrag{0}[c,t]{\tiny$\vphantom{\int}0$}
    \psfrag{1}[c,t]{\tiny$\vphantom{\int}-1$}
    \psfrag{2}[c,t]{\tiny$\vphantom{\int}-2$}
    \psfrag{3}[c,t]{\tiny$\vphantom{\int}-3$}
    \psfrag{4}[c,t]{\tiny$\vphantom{\int}-4$}
    \psfrag{5}[c,t]{\tiny$\vphantom{\int}-5$}
    \psfrag{6}[c,t]{\tiny$\vphantom{\int}-6$}
    \psfrag{a}[r,b]{\tiny$0.0$}
    \psfrag{b}[r,c]{\tiny$0.2$}
    \psfrag{c}[r,c]{\tiny$0.4$}
    \psfrag{d}[r,c]{\tiny$0.6$}
    \psfrag{e}[r,c]{\tiny$0.8$}
    \psfrag{f}[r,c]{\tiny$1.0$}
        \includegraphics[width=0.22\textwidth]{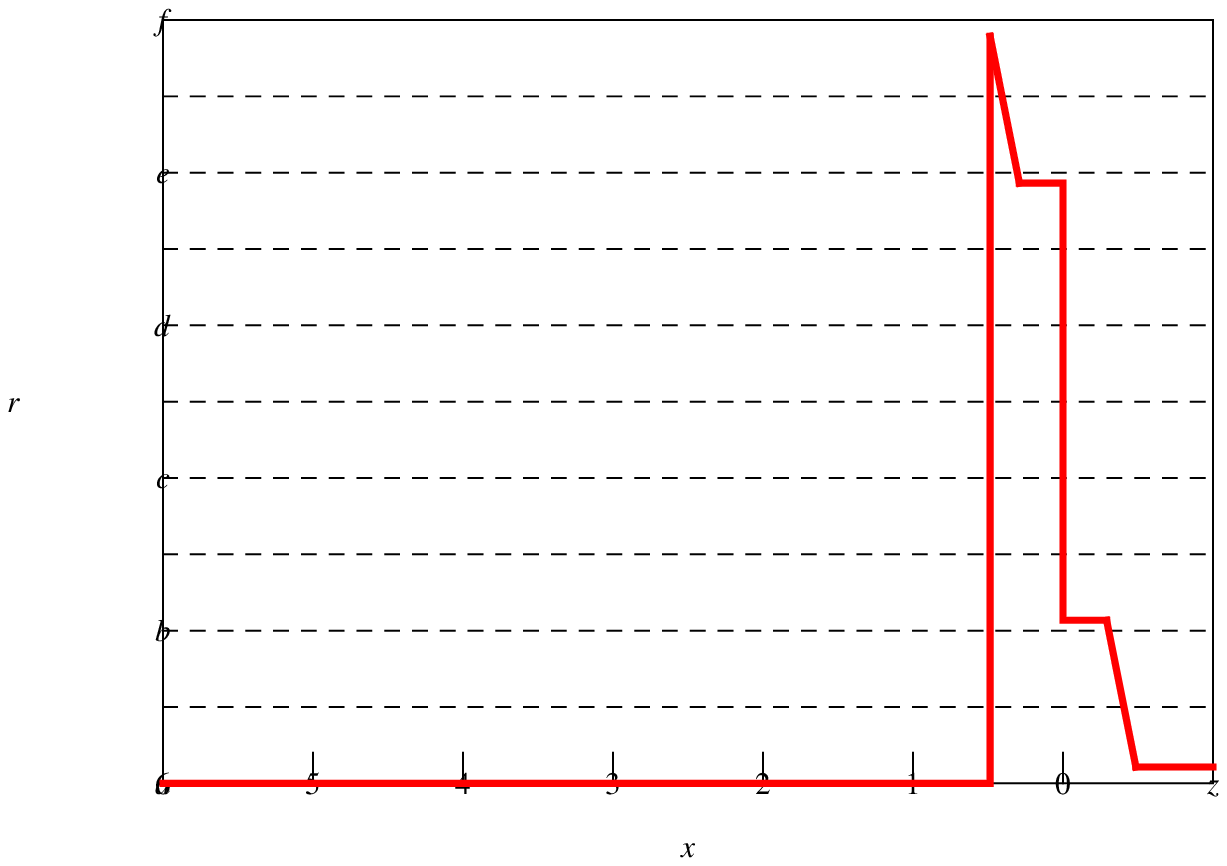}
    \end{psfrags}\label{fig:zebra7}}
\end{subfigure}~
\begin{subfigure}[$\rho(87.4981,x)$]
    {\begin{psfrags}
    \psfrag{r}[c,c]{}
    \psfrag{x}[c,t]{\tiny$\vphantom{\int}x$}
    \psfrag{z}[c,t]{\tiny$\vphantom{\int}1$}
    \psfrag{0}[c,t]{\tiny$\vphantom{\int}0$}
    \psfrag{1}[c,t]{\tiny$\vphantom{\int}-1$}
    \psfrag{2}[c,t]{\tiny$\vphantom{\int}-2$}
    \psfrag{3}[c,t]{\tiny$\vphantom{\int}-3$}
    \psfrag{4}[c,t]{\tiny$\vphantom{\int}-4$}
    \psfrag{5}[c,t]{\tiny$\vphantom{\int}-5$}
    \psfrag{6}[c,t]{\tiny$\vphantom{\int}-6$}
    \psfrag{a}[r,b]{\tiny$0.0$}
    \psfrag{b}[r,c]{\tiny$0.2$}
    \psfrag{c}[r,c]{\tiny$0.4$}
    \psfrag{d}[r,c]{\tiny$0.6$}
    \psfrag{e}[r,c]{\tiny$0.8$}
    \psfrag{f}[r,c]{\tiny$1.0$}
        \includegraphics[width=0.22\textwidth]{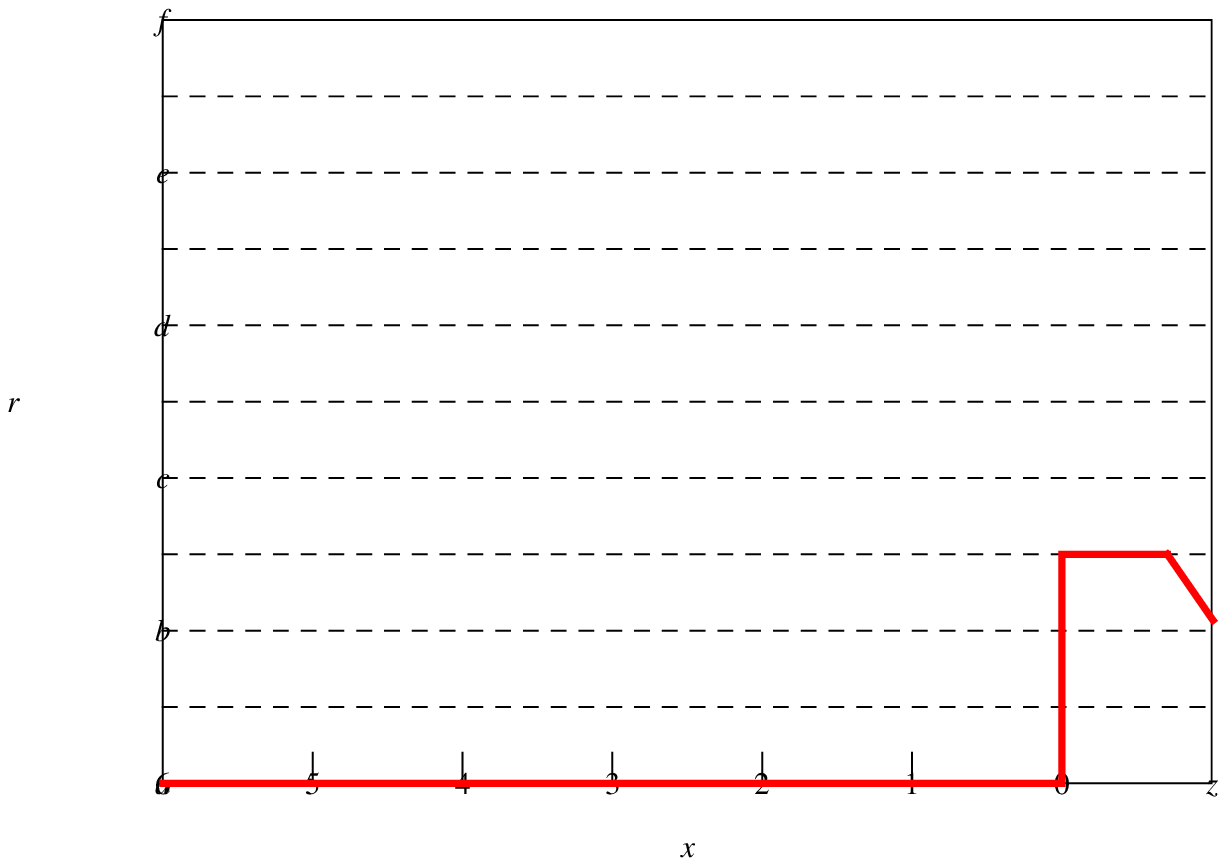}
    \end{psfrags}\label{fig:zebra8}}
\end{subfigure}\\
\caption{With reference to Subsection \ref{sec:validation}: The numerically computed solution $x \mapsto \rho_\Delta(t,x)$ and the explicitly computed solution $x \mapsto \rho(t,x)$ at different fixed times $t$.}\label{fig:validation3}
\end{figure}

\noindent A qualitative comparison between the numerically computed solution $x \mapsto \rho_\Delta(t,x)$ and the explicitly computed solution $x \mapsto \rho(t,x)$ at different fixed times $t$ is in Figure~\ref{fig:validation3}. We observe good agreements between $x \mapsto \rho(t,x)$ and $x \mapsto\rho_\Delta(t,x)$. The parameters for the numerically computed solution are $\Delta x=3.5\times10^{-4}$ and $\Delta t=7\times10^{-5}$.\\
A convergence analysis is also performed for this test. We introduce the relative $\L1$-error for the density $\rho$, at a given time $t^n$, defined by
$$E_{\L1}^n=\left [\sum_{j}\left|\rho(t^n,x_j)-\rho_j^n\right|\right ]\,\Big/\left [\sum_{j}\left|\rho(t^n,x_j)\right|\right ].$$
In Table~\ref{erreurs}, we computed the relative $\L1$-errors for different numbers of space cells at the fixed time $t=10$. We deduce that the order of convergence is approximatively $0.906$. As in~\cite{scontrainte}, we observe that the modification~\eqref{def.flux.num.contrainte} of the numerical flux does not affect the accuracy of the scheme. 

\begin{table}[ht]
\begin{center}
\begin{tabular}{|c|c|}
\hline
Number of cells & $\L1$-error\\
\hline
$625$ & $9.6843\times10^{-3}$\\
\hline
$1250$ & $6.2514\times10^{-3}$\\
\hline
$2500$ & $3.4143\times10^{-3}$\\
\hline
$5000$ & $1.3172\times10^{-3}$\\
\hline
$10000$ & $1.03\times10^{-3}$\\
\hline
$20000$ & $4.2544\times10^{-4}$\\
\hline
Order & $0.906$\\
\hline
\end{tabular}
\tiny\caption{Relative $\L1$-error at time $t=10$.}
\label{erreurs}
\end{center}
\end{table}


\section{Numerical simulations}\label{sec:simulations}

This section is devoted to the phenomenological description of some collective effects in crowd dynamics related to capacity drop, namely the Braess' paradox and the Faster Is Slower (FIS) effect.





\subsection{Faster is Slower effect}\label{sec:FIS}

The FIS effect was first described in~\cite{Helbing2000Simulating, Parisi2005606} in the context of the room evacuation problem. The authors studied the evolution of the evacuation time as a function of the maximal velocity reached by the pedestrians, and they shown that there exists an optimal velocity for which the evacuation time attains a minimum. Therefore, any acceleration beyond the optimal velocity worses the evacuation time. 
Following the studies above, the curve representing the evacuation time as a function of the average velocity takes a characteristic shape \cite[Figure 1c]{Parisi2005606}. 

The first numerical tests we performed aim to verify if such shape is obtained starting from the ADR model. 
To this end, we consider the corridor modeled by the segment [-6,1], with an exit at $x=0$. We consider the flux $f(\rho)=\rho \, v_{\max} \, (1-\rho)$ where $v_{\max}$ is the maximal velocity of the pedestrians and the maximal density is equal to one. We use the same weight function as for the validation of the scheme, $w(x)=2(1+x)\chi_{[-1,0]}(x)$ and, the same initial density, $\bar{\rho}=\chi_{[-5.75,-2]}$. The efficiency of the exit $p$ is now given by the following continuous function
\begin{eqnarray}
\label{P_continuous}
    p(\xi) &= \left\{
    \begin{array}{l@{\quad\text{ if }}l}
       p_0 & 0\le\xi<\xi_1,\\[6pt]
       \displaystyle\frac{(p_0-p_1)\xi+p_1\xi_1-p_0\xi_2}{\xi_1-\xi_2} & \xi_1\le\xi<\xi_2,\\[10pt]
       p_1 & \xi_2\le\xi\le1,
    \end{array}
    \right.
\end{eqnarray}  
where
\begin{align*}
    &p_0 =0.24, && p_1 =0.05, && \xi_1=0.5, && \xi_2=0.9.
\end{align*}

The space and time steps are fixed to $\Delta x=5\times10^{-3}$ and $\Delta t=5\times10^{-4}$. In Figure~\ref{fis_f_p} are plotted the flux $f$ corresponding to the maximal velocity $v_{\max}=1$ and the above efficiency of the exit.

\begin{figure}[ht]
\psfrag{rho, xi}[][][0.8]{$\rho,\,\xi$}
\psfrag{f}[][][0.3]{$f$}
\psfrag{p}[][][0.3]{$p$}
\psfrag{f, p}[][][0.8]{$f,\,p$}
\centering\includegraphics[width=0.48\hsize]{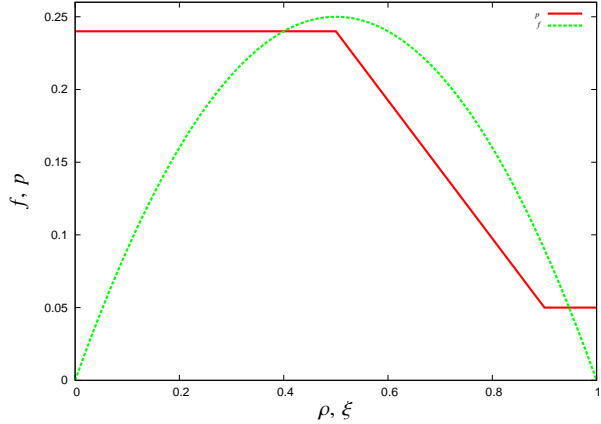}
\tiny \caption {The normalized flux $\rho\to f(\rho)$ and the constraint $\xi\to p(\xi)$ defined in~\eqref{P_continuous}.}
\label{fis_f_p}
\end{figure}

\begin{figure}[ht]
\psfrag{velocity}[][][0.8]{$v_{\max}$}
\psfrag{exit time}[][][0.8]{evacuation time}
\centering\includegraphics[width=0.48\hsize]{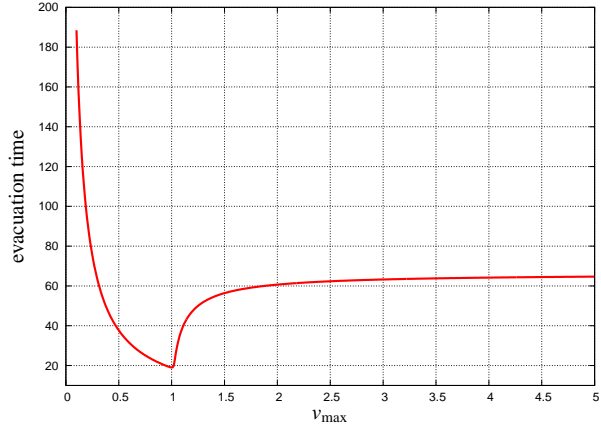}
\tiny \caption {With reference to Subsection \ref{sec:FIS}: Evacuation time as a function of the velocity $v_{\max}$.}
\label{fis_exit_times_velocities}
\end{figure}

\begin{figure}[ht]
\begin{subfigure}[$\rho\mapsto\rho_\Delta(0,t)$ for velocities $v_{\max}\le1$.]
{\begin{psfrags}
\psfrag{v1}[][][0.3]{$v_{\max}=1$\qquad}
\psfrag{v095}[][][0.3]{$v_{\max}=0.95$\qquad}
\psfrag{v099}[][][0.3]{$v_{\max}=0.99$\qquad}
\psfrag{rho}[][][0.6]{}
\psfrag{t}[][][0.8]{t}
\includegraphics[width=0.48\hsize]{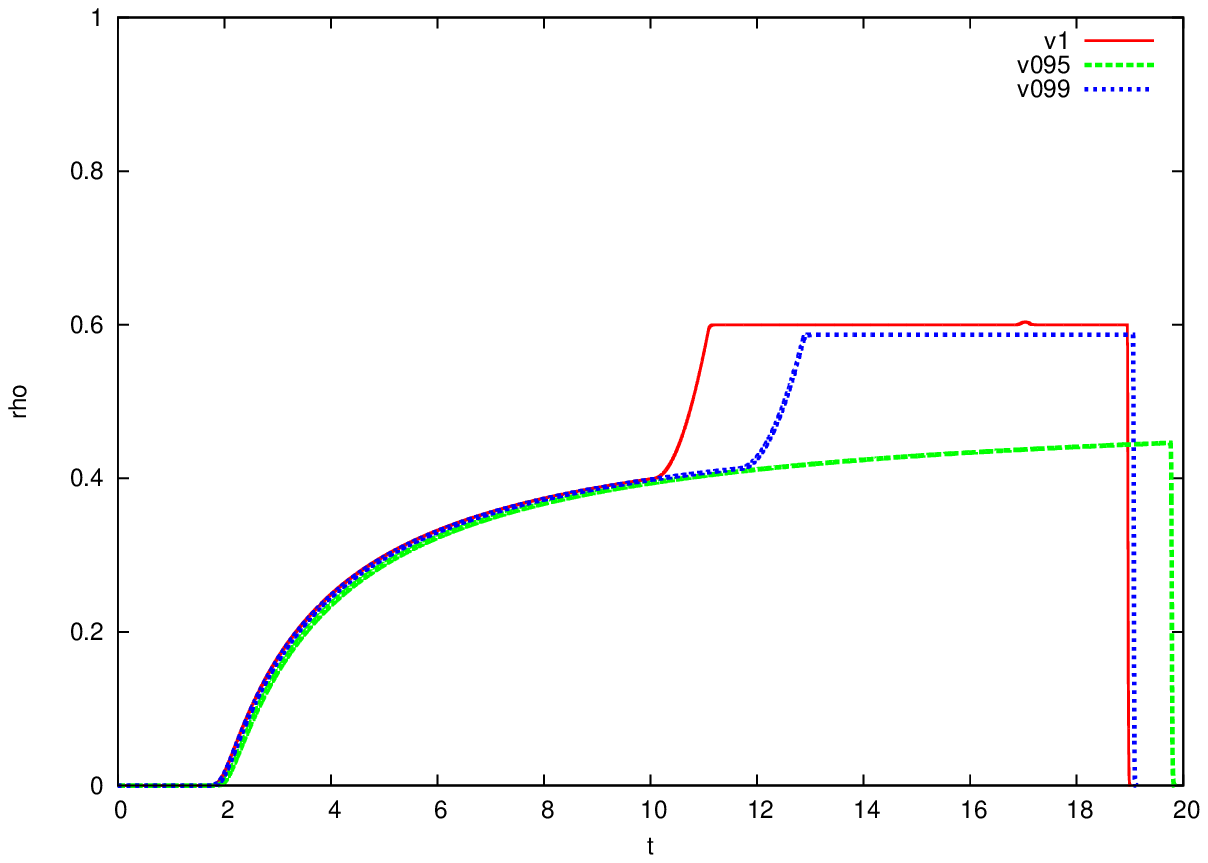}
\end{psfrags}}
\end{subfigure}~
\begin{subfigure}[$\rho\mapsto\rho_\Delta(0,t)$ for velocities $v_{\max}\ge1$.]
{\begin{psfrags}
\psfrag{v1}[][][0.3]{$v_{\max}=1$\qquad}
\psfrag{v101}[][][0.3]{$v_{\max}=1.01$\qquad}
\psfrag{v105}[][][0.3]{$v_{\max}=1.05$\qquad}
\psfrag{rho}[][][0.8]{}
\psfrag{t}[][][0.8]{t}
\includegraphics[width=0.48\hsize]{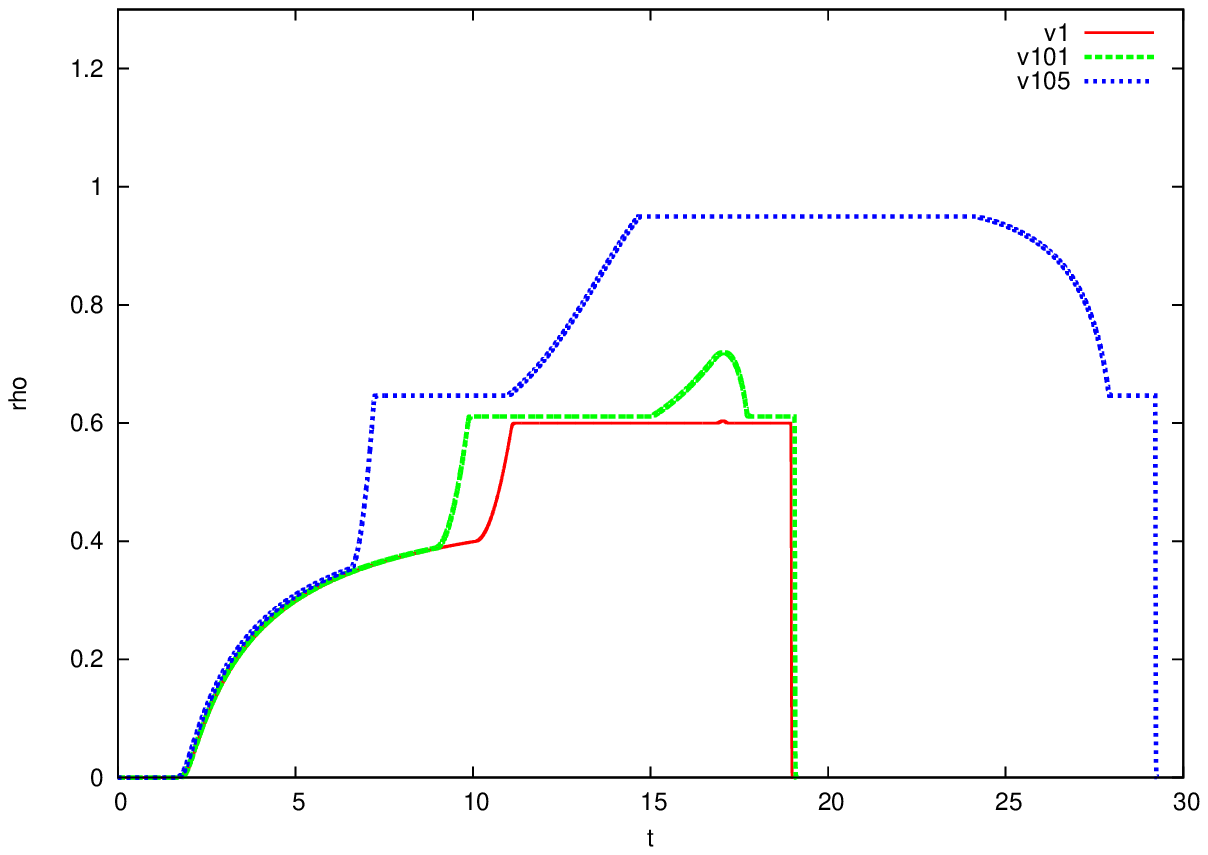}
\end{psfrags}}
\end{subfigure}
\tiny \caption{With reference to Subsection \ref{sec:FIS}: Densities at the exit as a function of time for different velocities.}
\label{fis_densities_doors}
\end{figure}

\begin{figure}[ht]
\psfrag{p1}[r][][0.3]{$\beta=1$}
\psfrag{p2}[r][][0.3]{$\beta=0.9$}
\psfrag{p3}[r][][0.3]{$\beta=0.8$}
\psfrag{xi}[][][0.8]{$\xi$}
\psfrag{p}[][][0.8]{$p_\beta$}
\centering\includegraphics[width=0.48\hsize]{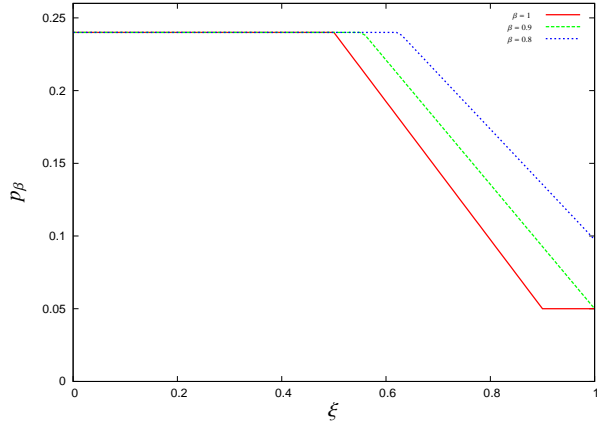}
\tiny \caption {With reference to Subsection \ref{sec:FIS}: The efficiencies $\xi\to p_\beta(\xi)$ for $\beta=0.8, 0.9, 1$.}
\label{p_beta}
\end{figure}

\begin{figure}[ht]\centering
\begin{subfigure}[Evacuation time as a function of $v_{\max}$ for different amounts of initial densities.]
{\begin{psfrags}
\psfrag{exit time}[][][0.8]{evacuation time}
\psfrag{init1}[l][][0.3]{$\bar{\rho}$}
\psfrag{init08}[l][][0.3]{$\bar{\rho}_1$}
\psfrag{init06}[l][][0.3]{$\bar{\rho}_2$}
\psfrag{velocity}[][][0.8]{$v_{\max}$}
\includegraphics[width=0.48\hsize]{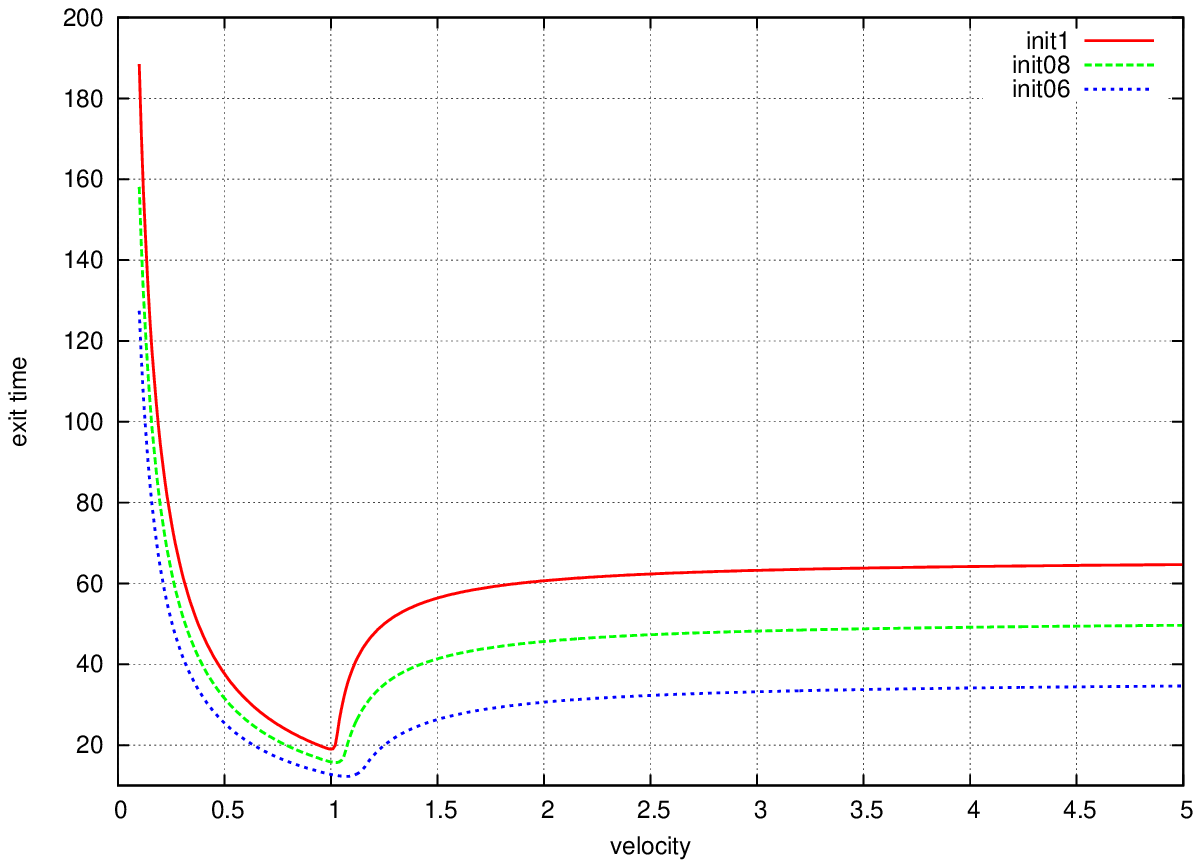}
\end{psfrags}}
\end{subfigure}
\begin{subfigure}[Evacuation time as a function of $v_{\max}$ for different efficiencies of the exit.]
{\begin{psfrags}
\psfrag{exit time}[][][0.8]{evacuation time}
\psfrag{beta1}[c][][0.3]{$\beta=1$}
\psfrag{beta09}[c][][0.3]{$\beta=0.9$}
\psfrag{beta08}[c][][0.3]{$\beta=0.8$}
\psfrag{velocity}[][][0.8]{$v_{\max}$}
\includegraphics[width=0.48\hsize]{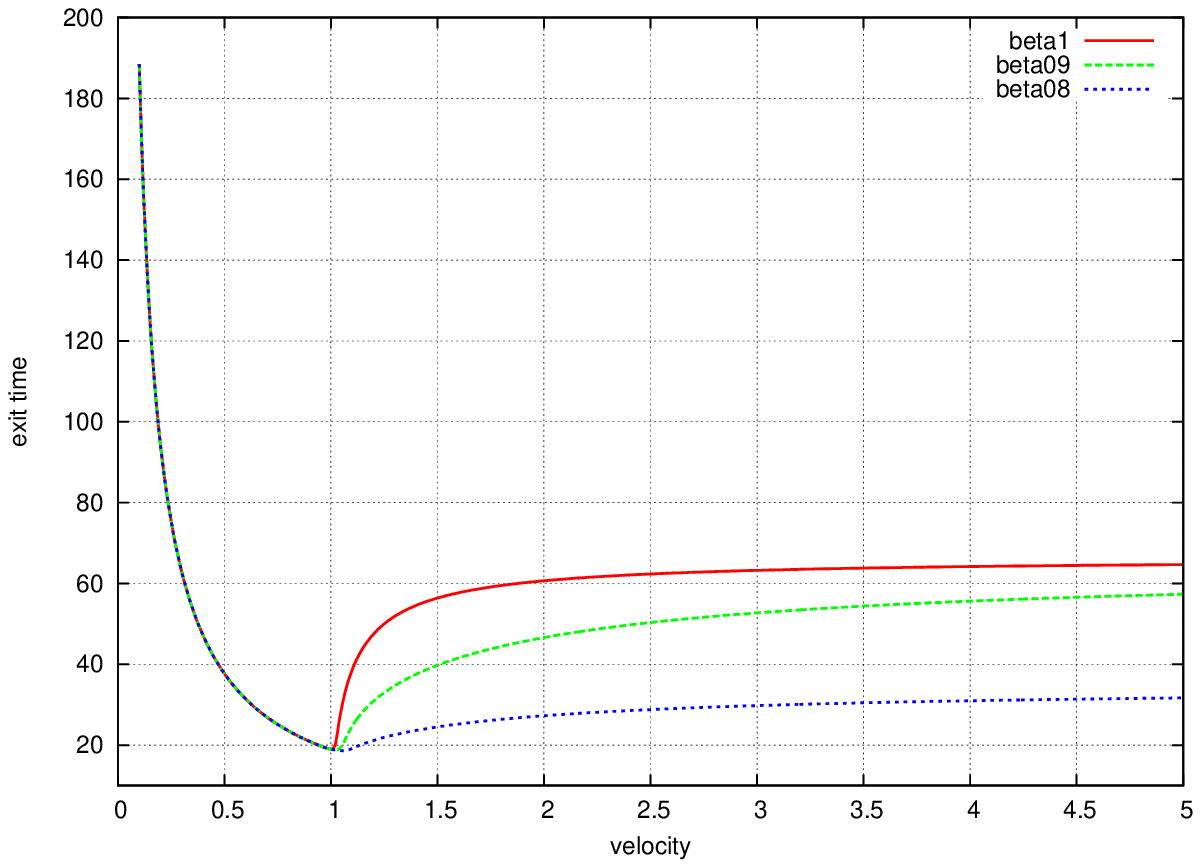}
\end{psfrags}}
\end{subfigure}
\begin{subfigure}[Evacuation time as a function of $v_{\max}$ for different locations of the initial density.]
{\begin{psfrags}
\psfrag{exit time}[][][0.8]{evacuation time}
\psfrag{init}[l][][0.3]{$\bar{\rho}$}
\psfrag{t12}[l][][0.3]{$\bar{\rho}_3$}
\psfrag{t20}[l][][0.3]{$\bar{\rho}_4$}
\psfrag{velocity}[][][0.8]{$v_{\max}$}
\includegraphics[width=0.48\hsize]{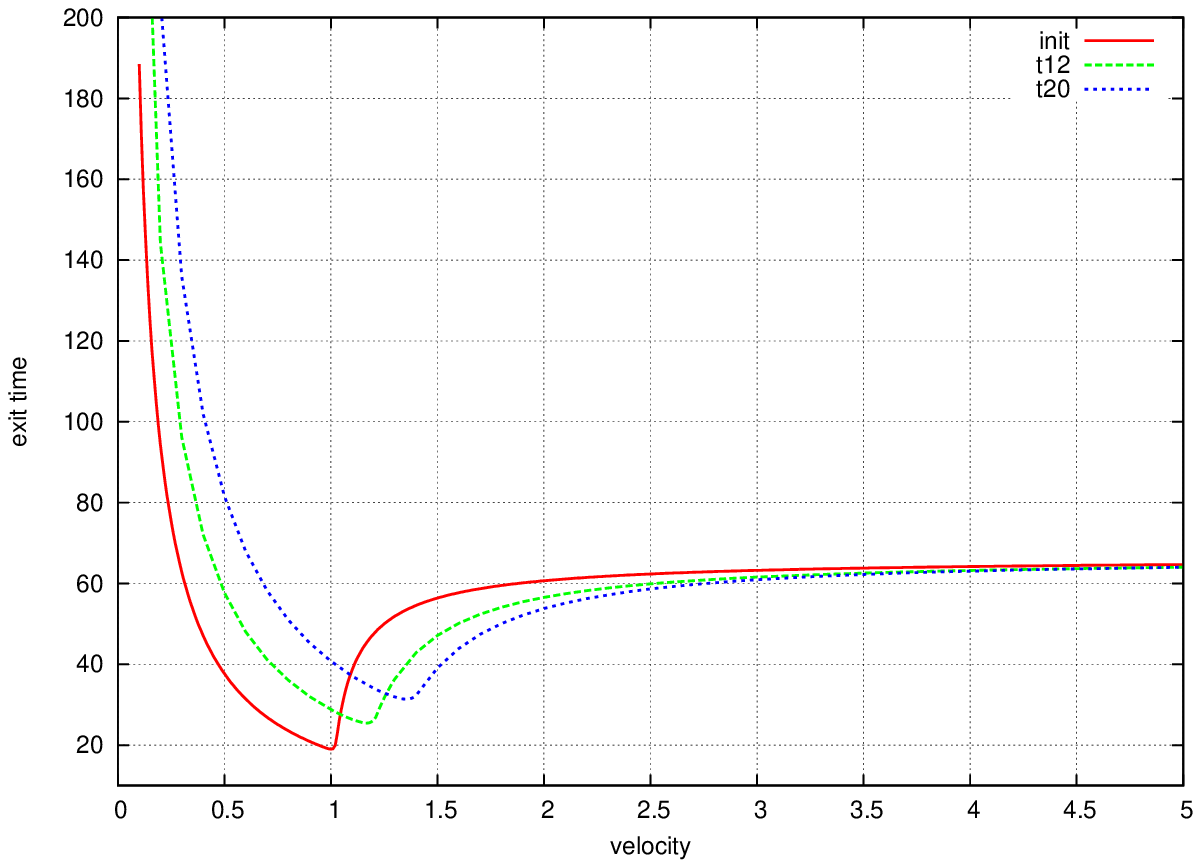}
\end{psfrags}}
\end{subfigure}
\tiny \caption {With reference to Subsection \ref{sec:FIS}: Evacuation time as a function of $v_{\max}$ for different parameters of the model.}
\label{fis_stabilities}
\end{figure}

Figure~\ref{fis_exit_times_velocities} represents the evacuation time as a function of the maximal velocity $v_{\max}$, as $v_{\max}$ varies in the interval $[0.1 , 5]$. As we can observe, the general shape described above is recovered. The numerical minimal evacuation time is $19.007$ and is obtained for $v_{\max}=1$.

In addition, we reported in Figure~\ref{fis_densities_doors} the density at the exit as a function of time for different values of the maximal velocity $v_{\max}$ around the optimal one. We notice that the maximal density at the exit and the time length where the density is maximal increase with the velocity. This expresses the jamming at the exit that leads to the FIS effect. 

\begin{figure}
\centering
\psfrag{t}[][][0.8]{evacuation time}
\psfrag{d}[][][0.8]{position of the obstacle}
\psfrag{A}[][][0.3]{with obstacle\hspace{1.5cm}}
\psfrag{B}[][][0.3]{without obstacle\hspace{2cm}}
\psfrag{D}[][][0.8]{$d$}
\includegraphics[width=0.48\hsize]{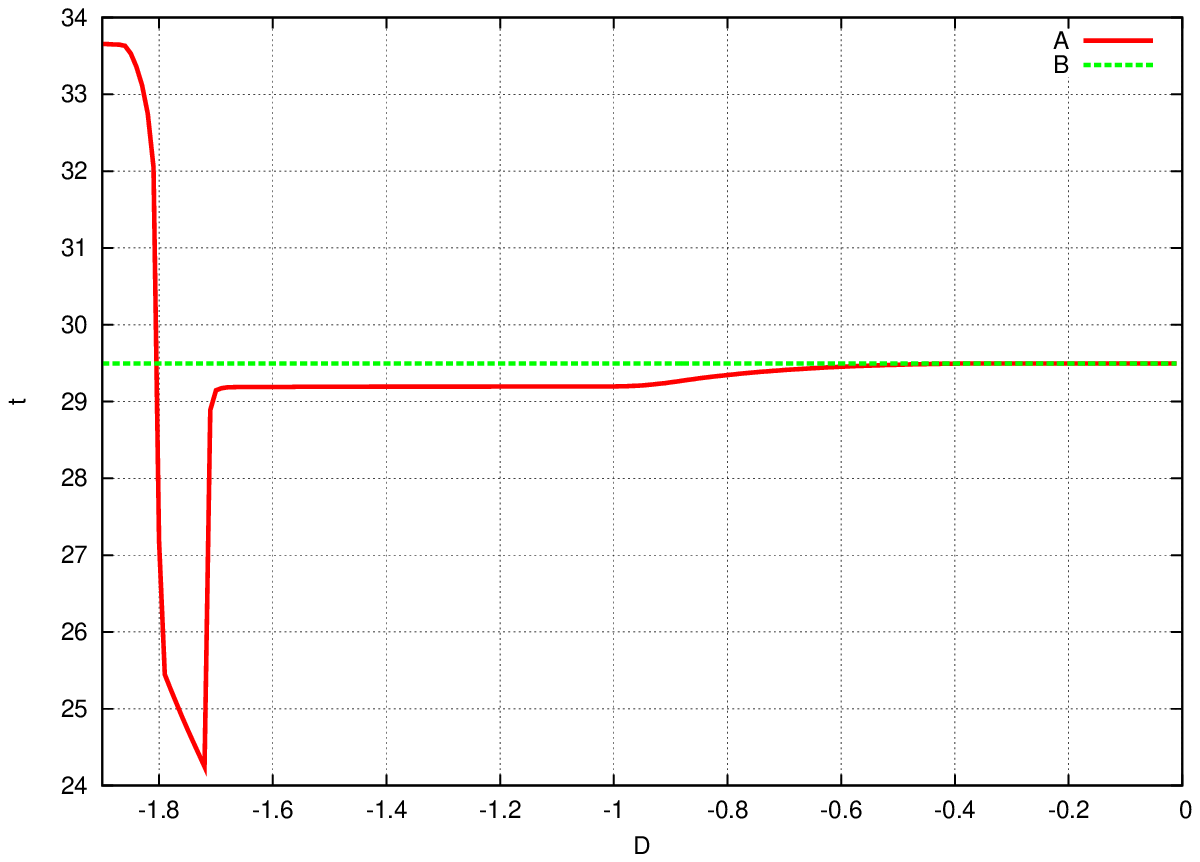}
\tiny \caption{With reference to Subsection \ref{sec:Braess}: Evacuation time as a function of the position of the obstacle.}
\label{braess_exit_times}
\end{figure}

\begin{figure}
\centering
\renewcommand{\arraystretch}{1.5}
\begin{tabular}{>{\centering\bfseries}m{0.05\hsize} @{}>{\centering}m{0.3\hsize} @{}>{\centering}m{0.3\hsize} @{}>{\centering\arraybackslash}m{0.3\hsize}}
& Without obstacle & Obstacle at $d=-1.85$ & Obstacle at $d=-1.72$\\
\rotatebox{90}{$t=1$}&
{\begin{psfrags}
\psfrag{t=1}[r][][0.4]{$\displaystyle\vphantom{\int^{\int^{\int}}}\rho(1,x)$}
\psfrag{rho}[][][0.8]{}
\psfrag{x}[][][0.8]{$x$}
\includegraphics[width=\hsize]{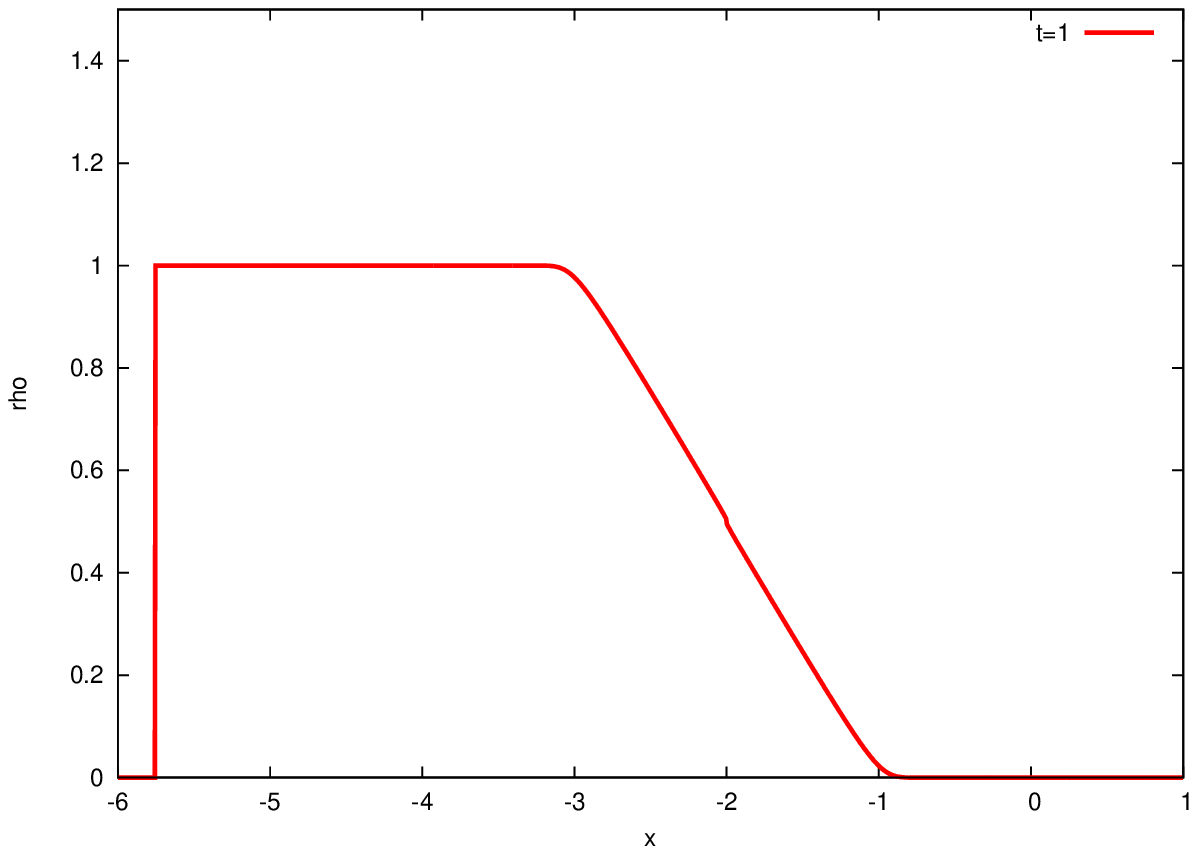}
\end{psfrags}}&
{\begin{psfrags}
\psfrag{t=1}[r][][0.4]{$\displaystyle\vphantom{\int^{\int^{\int}}}\rho(1,x)$}
\psfrag{rho}[][][0.8]{}
\psfrag{x}[][][0.8]{$x$}
\includegraphics[width=\hsize]{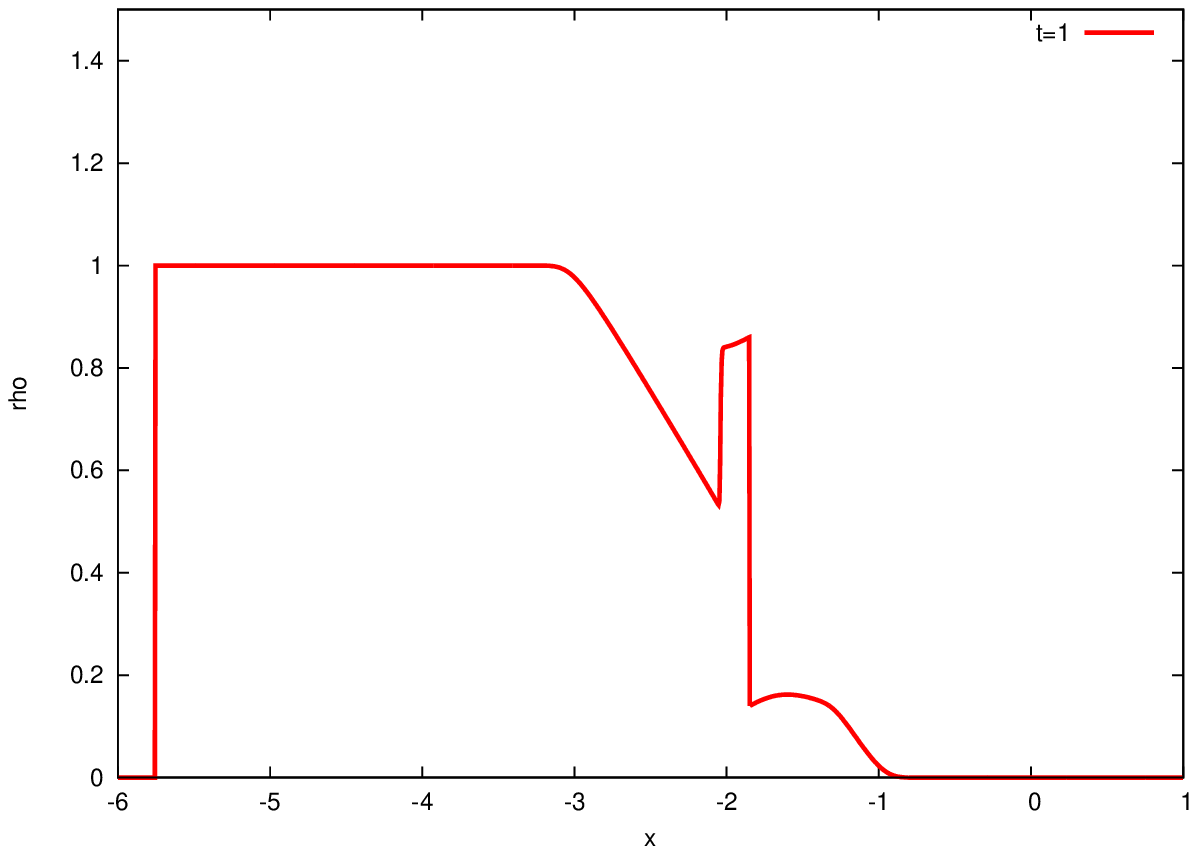}
\end{psfrags}}&
{\begin{psfrags}
\psfrag{t=1}[r][][0.4]{$\displaystyle\vphantom{\int^{\int^{\int}}}\rho(1,x)$}
\psfrag{rho}[][][0.8]{}
\psfrag{x}[][][0.8]{$x$}
\includegraphics[width=\hsize]{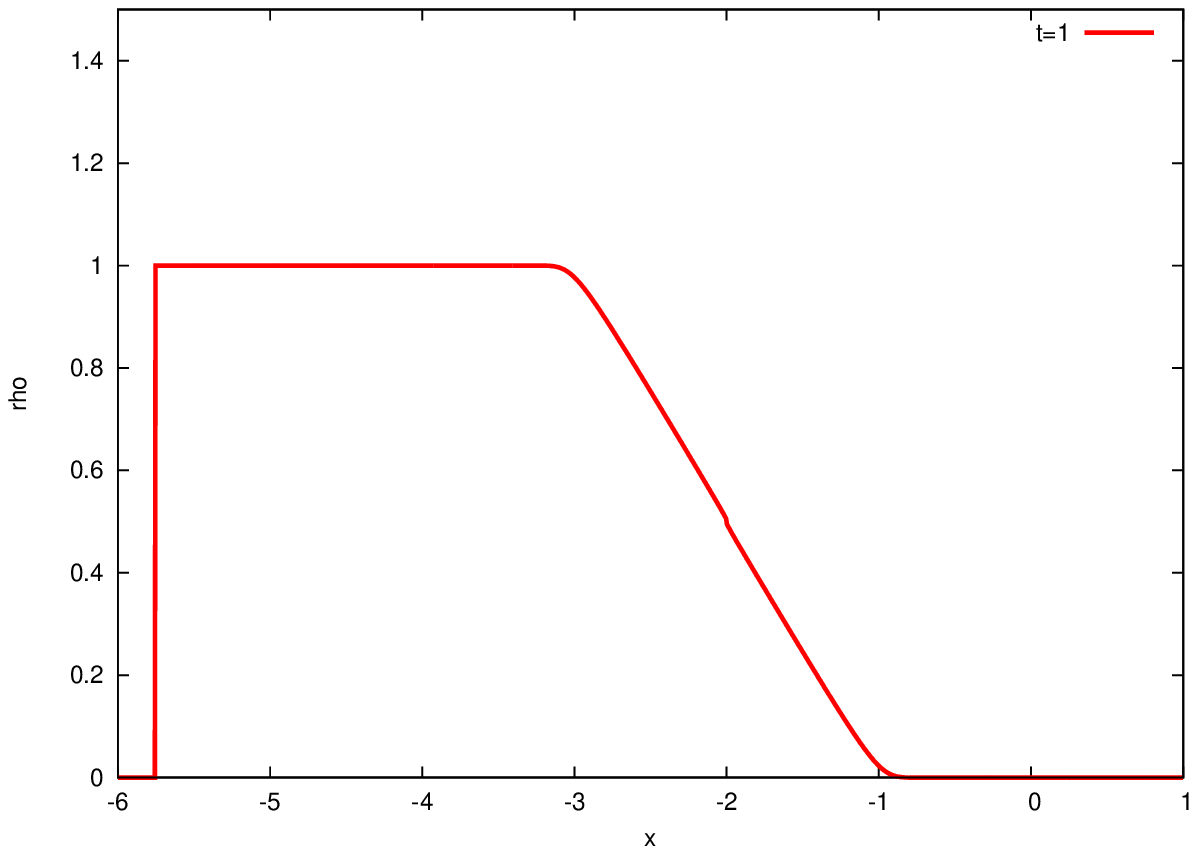}
\end{psfrags}}
\\
\rotatebox{90}{$t=7$}&
{\begin{psfrags}
\psfrag{t=7}[r][][0.4]{$\displaystyle\vphantom{\int^{\int^{\int}}}\rho(7,x)$}
\psfrag{rho}[][][0.8]{}
\psfrag{x}[][][0.8]{$x$}
\includegraphics[width=\hsize]{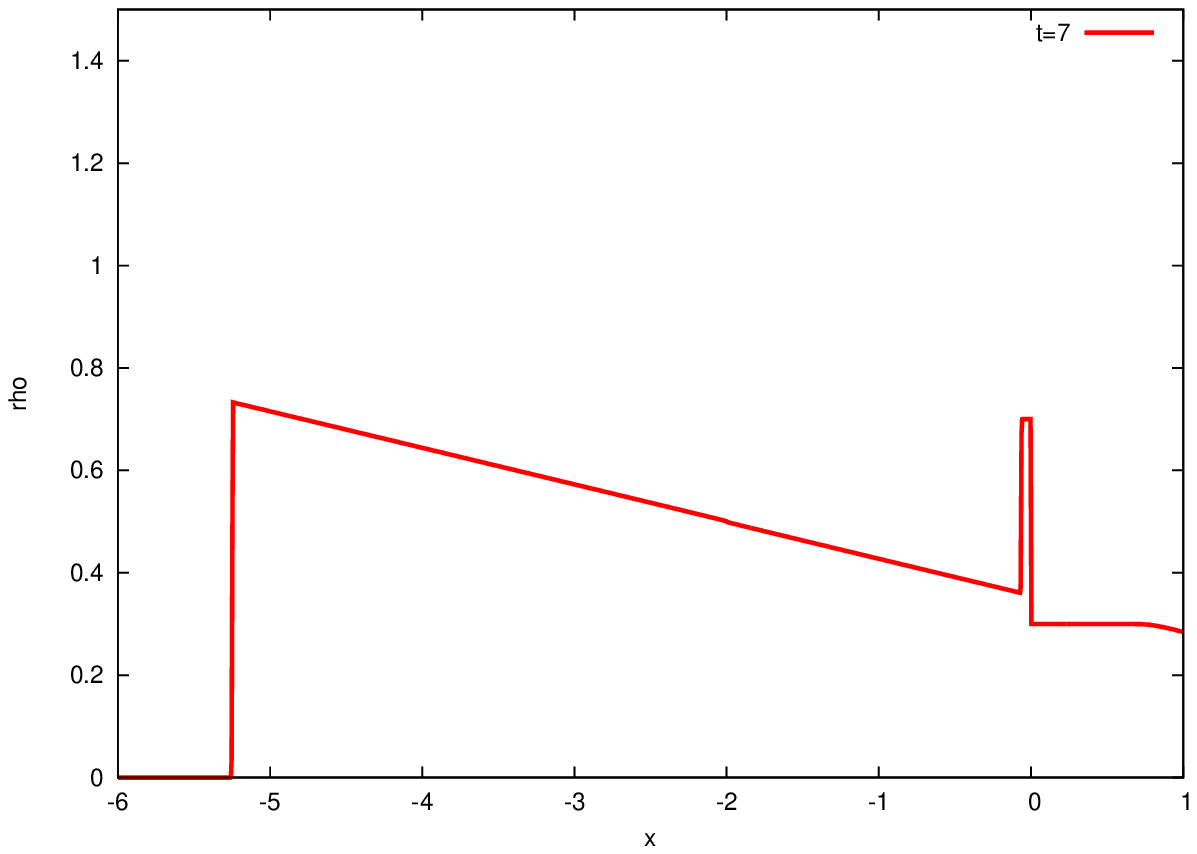}
\end{psfrags}}&
{\begin{psfrags}
\psfrag{t=7}[r][][0.4]{$\displaystyle\vphantom{\int^{\int^{\int}}}\rho(7,x)$}
\psfrag{rho}[][][0.8]{}
\psfrag{x}[][][0.8]{$x$}
\includegraphics[width=\hsize]{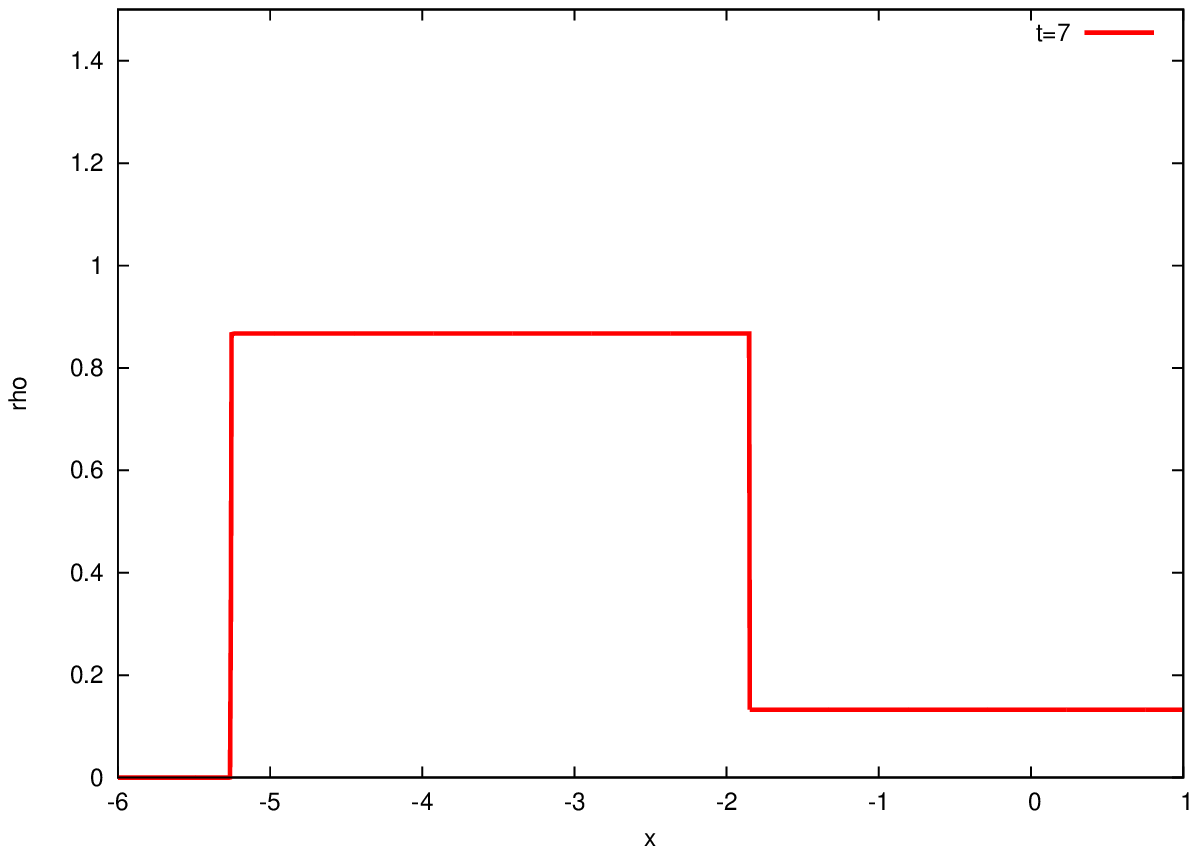}
\end{psfrags}}&
{\begin{psfrags}
\psfrag{t=7}[r][][0.4]{$\displaystyle\vphantom{\int^{\int^{\int}}}\rho(7,x)$}
\psfrag{rho}[][][0.8]{}
\psfrag{x}[][][0.8]{$x$}
\includegraphics[width=\hsize]{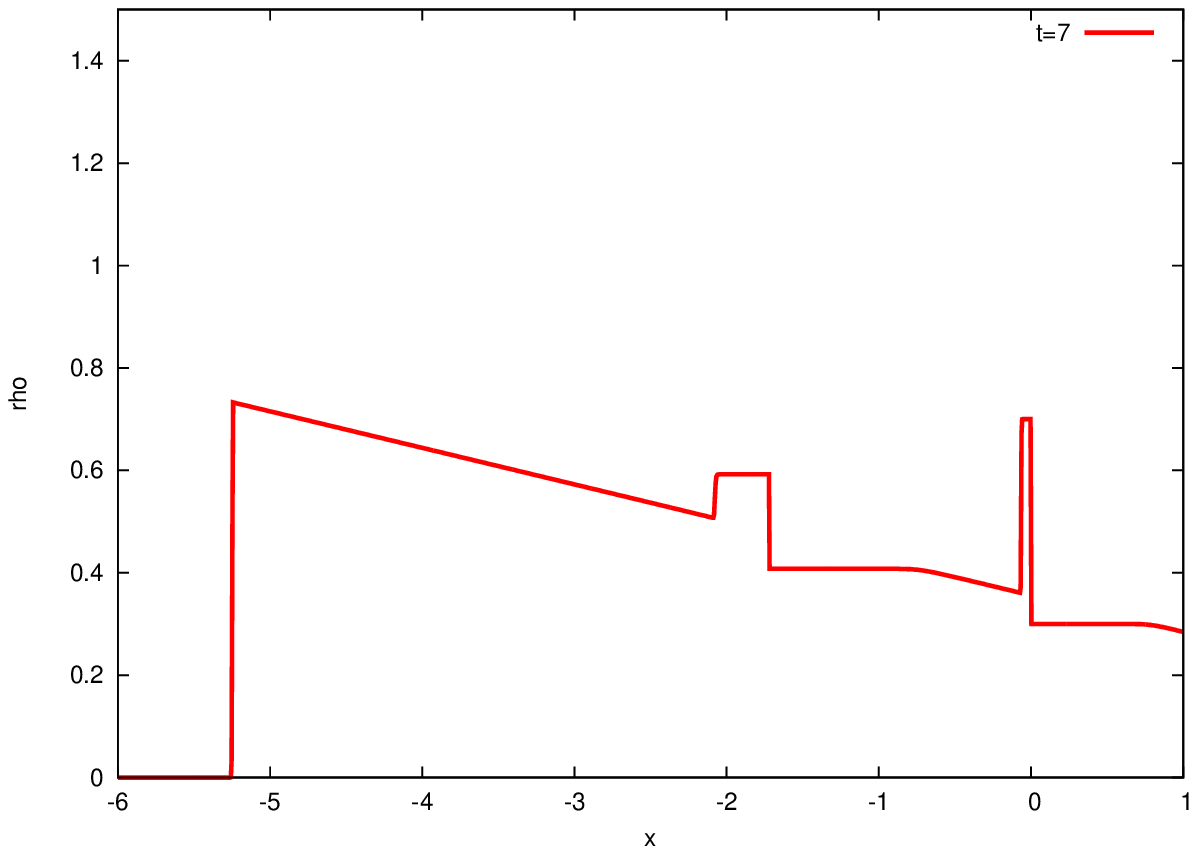}
\end{psfrags}}
\\
\rotatebox{90}{$t=15$}&
{\begin{psfrags}
\psfrag{t=15}[r][][0.4]{$\displaystyle\vphantom{\int^{\int^{\int}}}\rho(15,x)$}
\psfrag{rho}[][][0.8]{}
\psfrag{x}[][][0.8]{$x$}
\includegraphics[width=\hsize]{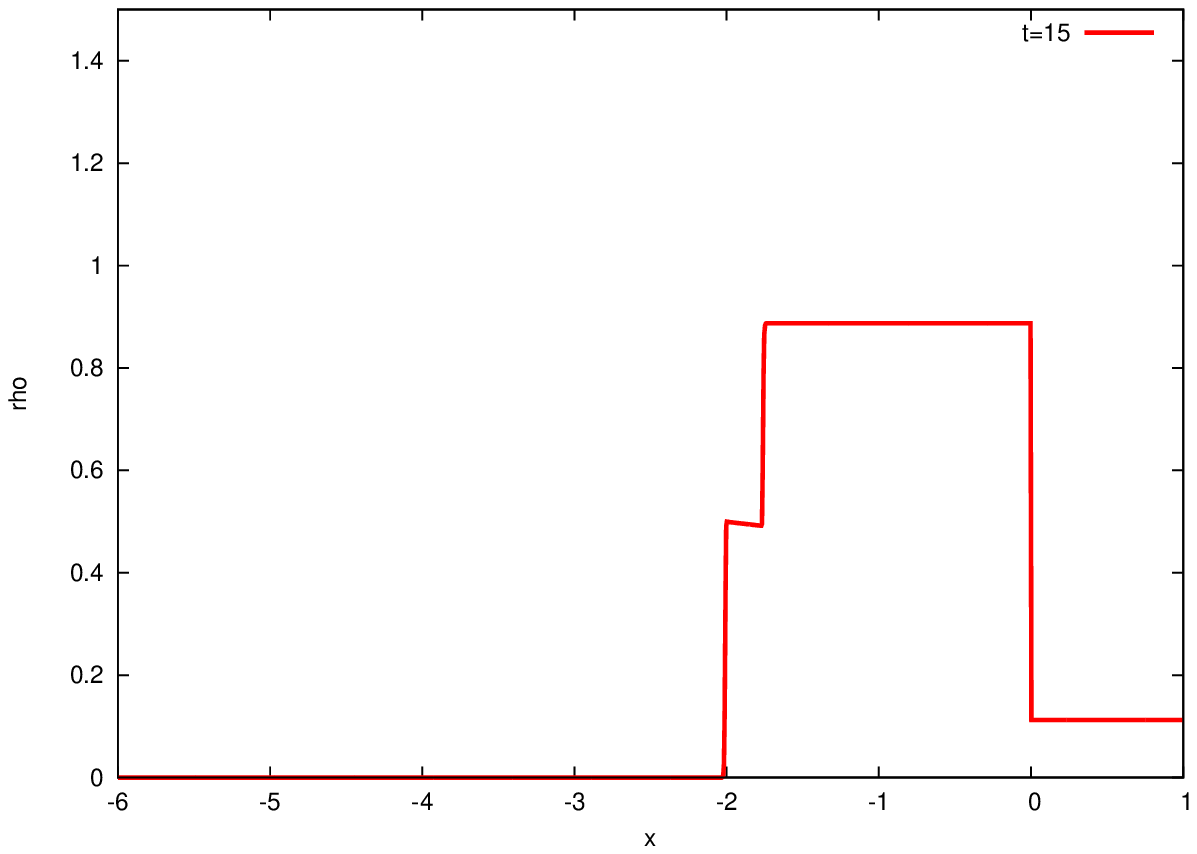}
\end{psfrags}}&
{\begin{psfrags}
\psfrag{t=15}[r][][0.4]{$\displaystyle\vphantom{\int^{\int^{\int}}}\rho(15,x)$}
\psfrag{rho}[][][0.8]{}
\psfrag{x}[][][0.8]{$x$}
\includegraphics[width=\hsize]{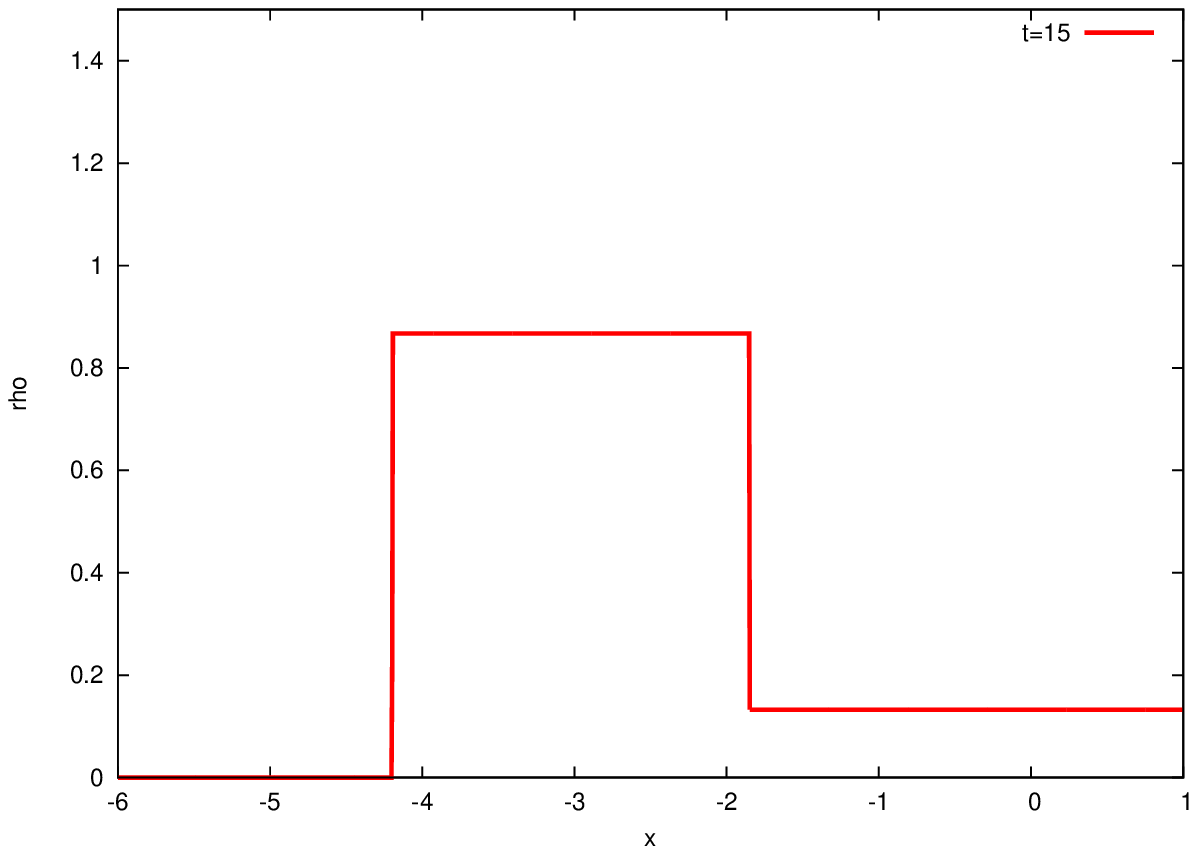}
\end{psfrags}}&
{\begin{psfrags}
\psfrag{t=15}[r][][0.4]{$\displaystyle\vphantom{\int^{\int^{\int}}}\rho(15,x)$}
\psfrag{rho}[][][0.8]{}
\psfrag{x}[][][0.8]{$x$}
\includegraphics[width=\hsize]{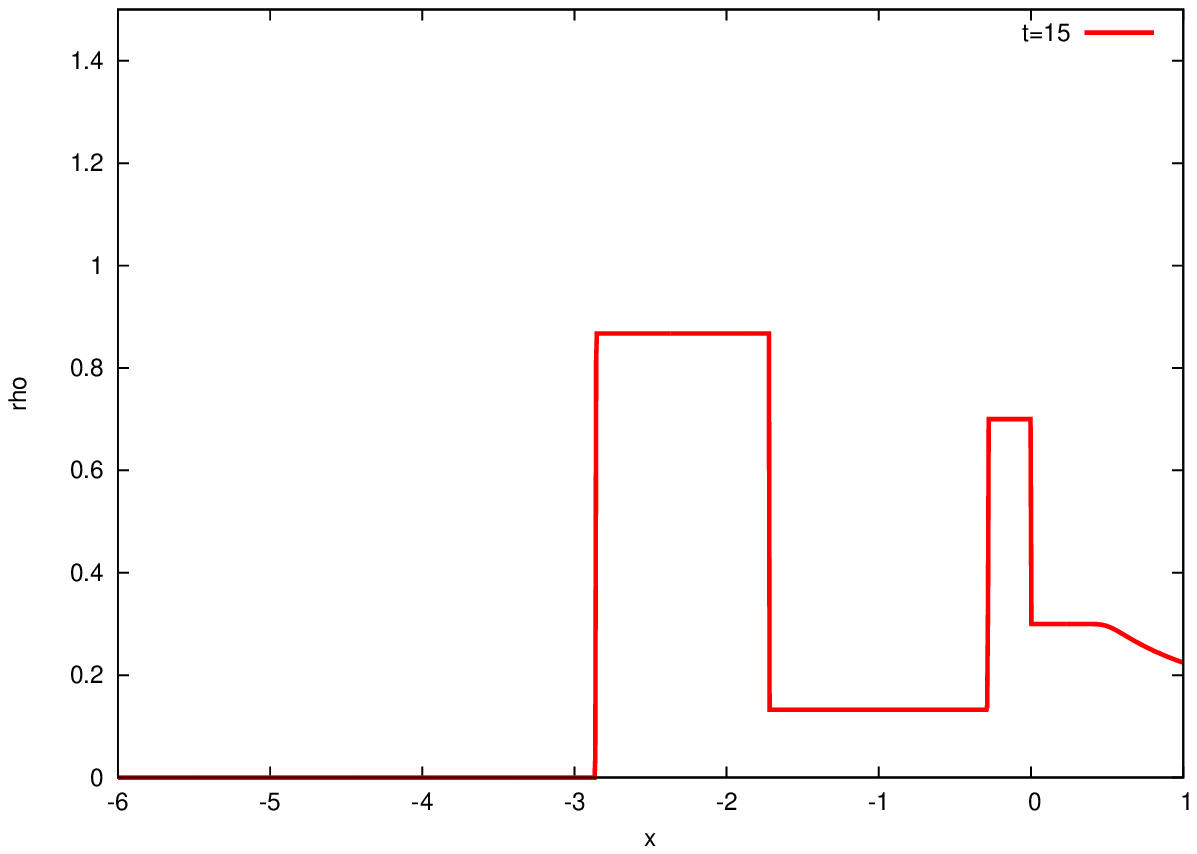}
\end{psfrags}}
\\
\rotatebox{90}{$t=19$}&
{\begin{psfrags}
\psfrag{t=19}[r][][0.4]{$\displaystyle\vphantom{\int^{\int^{\int}}}\rho(19,x)$}
\psfrag{rho}[][][0.8]{}
\psfrag{x}[][][0.8]{$x$}
\includegraphics[width=\hsize]{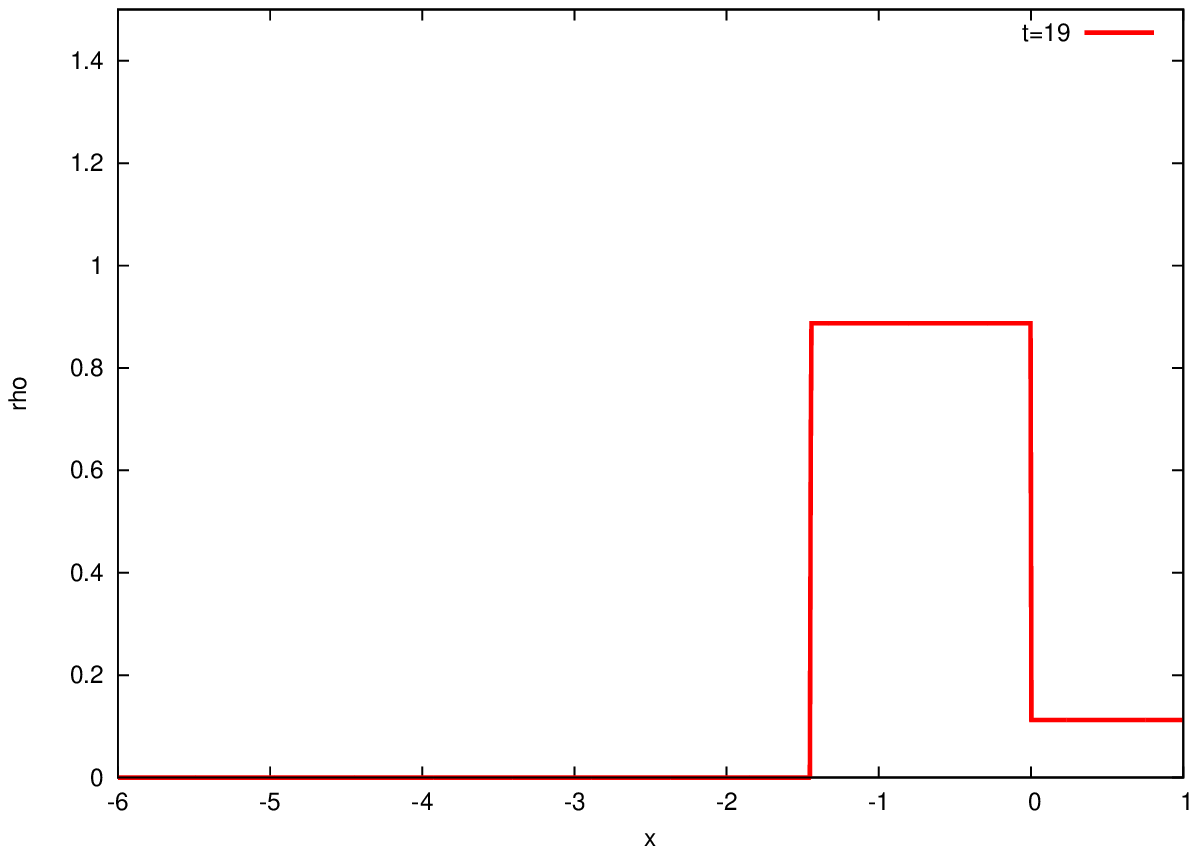}
\end{psfrags}}&
{\begin{psfrags}
\psfrag{t=19}[r][][0.4]{$\displaystyle\vphantom{\int^{\int^{\int}}}\rho(19,x)$}
\psfrag{rho}[][][0.8]{}
\psfrag{x}[][][0.8]{$x$}
\includegraphics[width=\hsize]{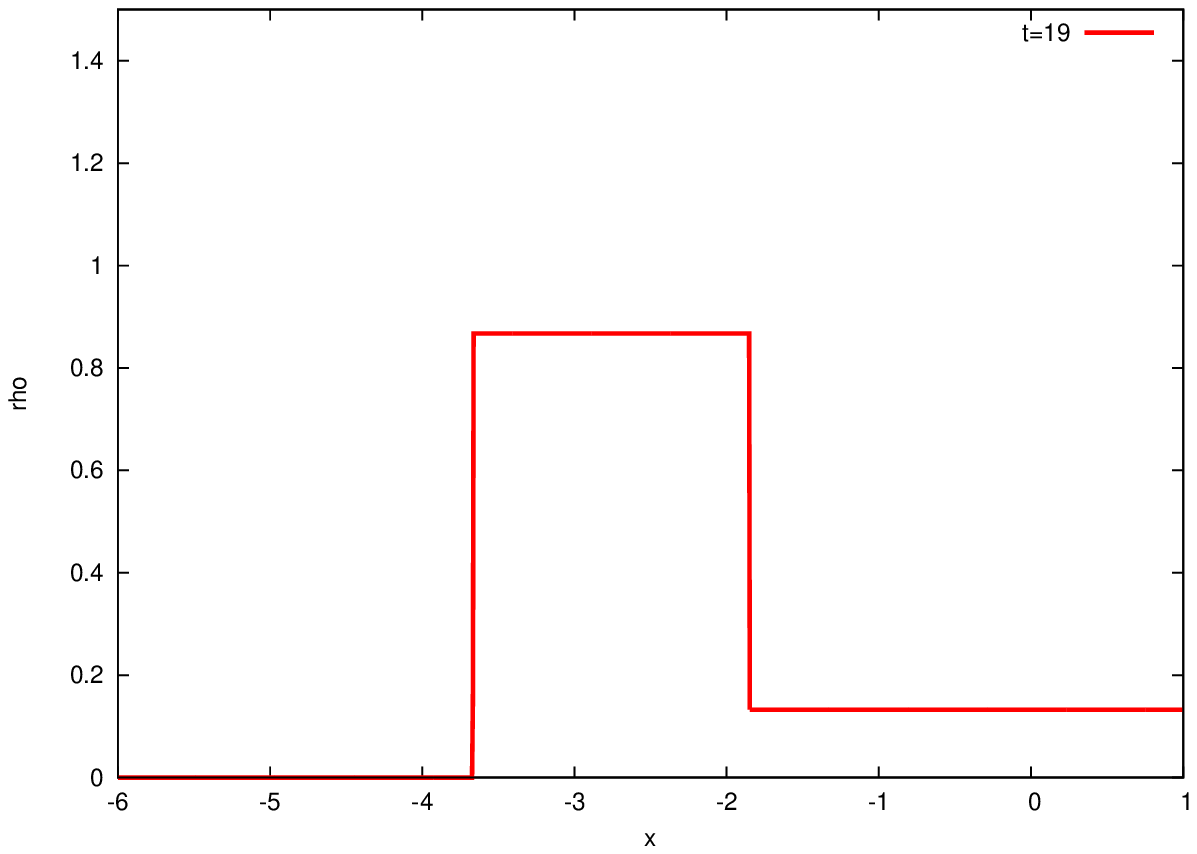}
\end{psfrags}}&
{\begin{psfrags}
\psfrag{t=19}[r][][0.4]{$\displaystyle\vphantom{\int^{\int^{\int}}}\rho(19,x)$}
\psfrag{rho}[][][0.8]{}
\psfrag{x}[][][0.8]{$x$}
\includegraphics[width=\hsize]{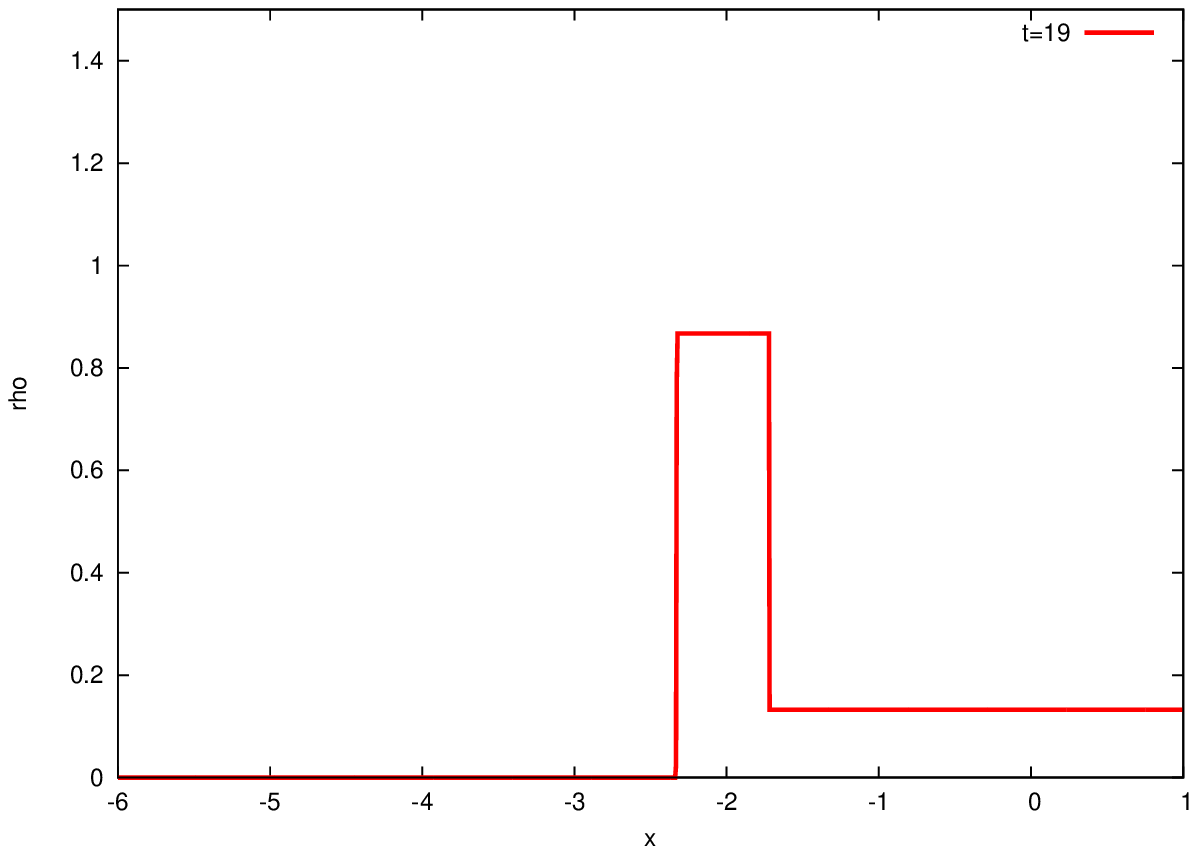}
\end{psfrags}}
\\
\rotatebox{90}{$t=24.246$}&
{\begin{psfrags}
\psfrag{final}[r][][0.4]{$\displaystyle\vphantom{\int^{\int^{\int}}}\rho(24.246,x)$}
\psfrag{rho}[][][0.8]{}
\psfrag{x}[][][0.8]{$x$}
\includegraphics[width=\hsize]{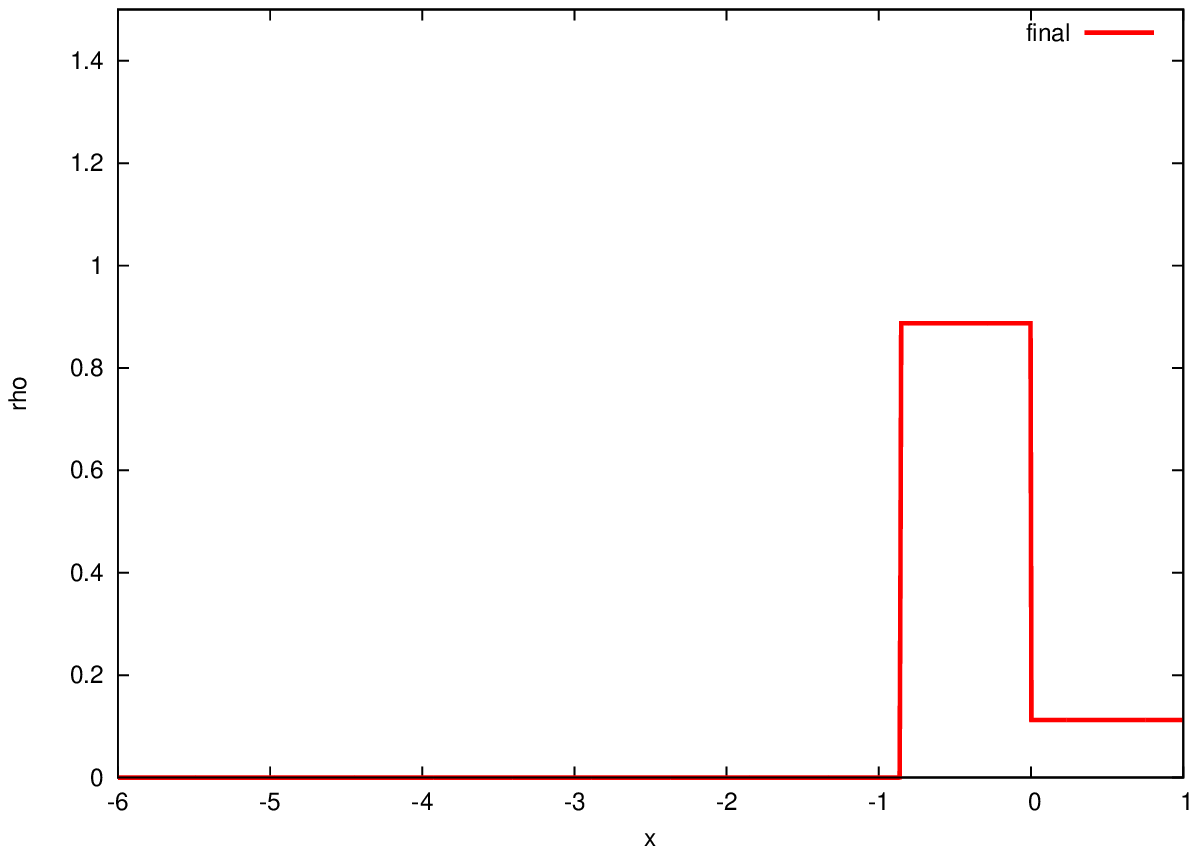}
\end{psfrags}}&
{\begin{psfrags}
\psfrag{final}[r][][0.4]{$\displaystyle\vphantom{\int^{\int^{\int}}}\rho(24.246,x)$}
\psfrag{rho}[][][0.8]{}
\psfrag{x}[][][0.8]{$x$}
\includegraphics[width=\hsize]{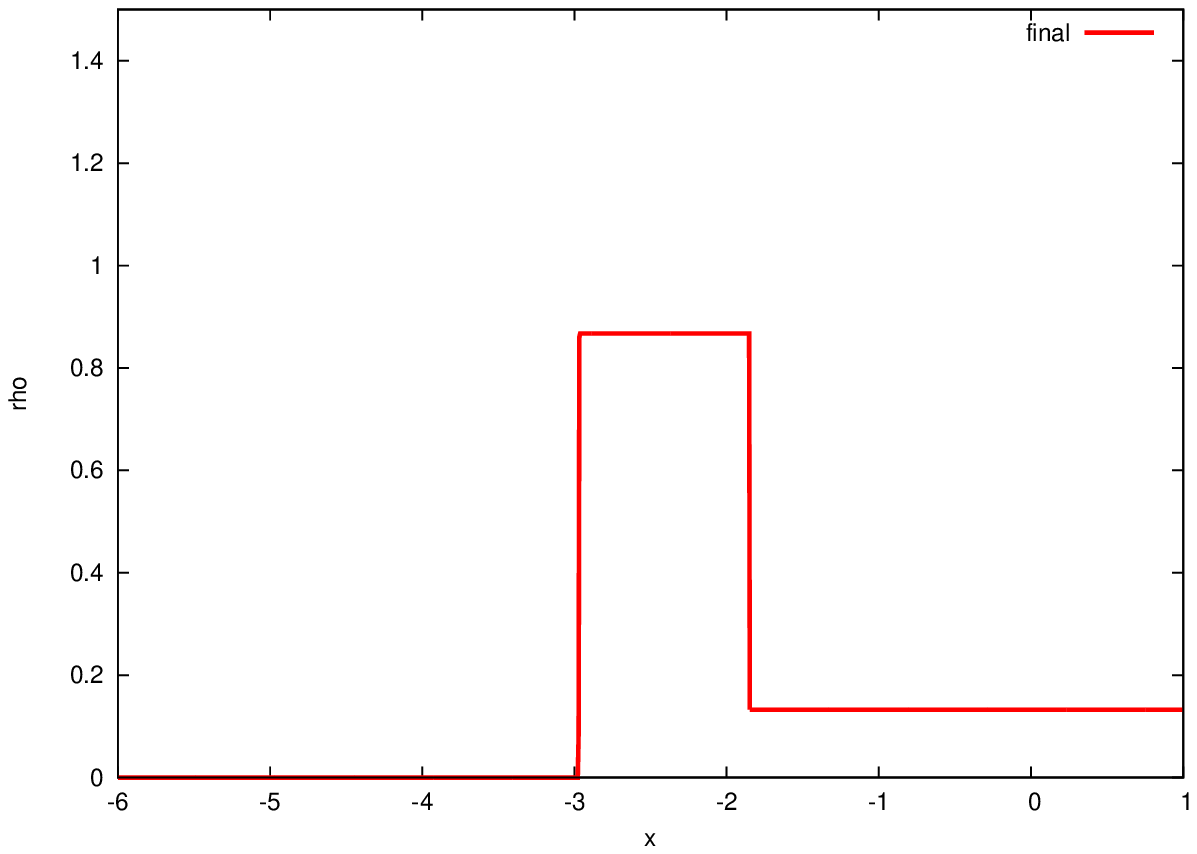}
\end{psfrags}}&
{\begin{psfrags}
\psfrag{final}[r][][0.4]{$\displaystyle\vphantom{\int^{\int^{\int}}}\rho(24.246,x)$}
\psfrag{rho}[][][0.8]{}
\psfrag{x}[][][0.8]{$x$}
\includegraphics[width=\hsize]{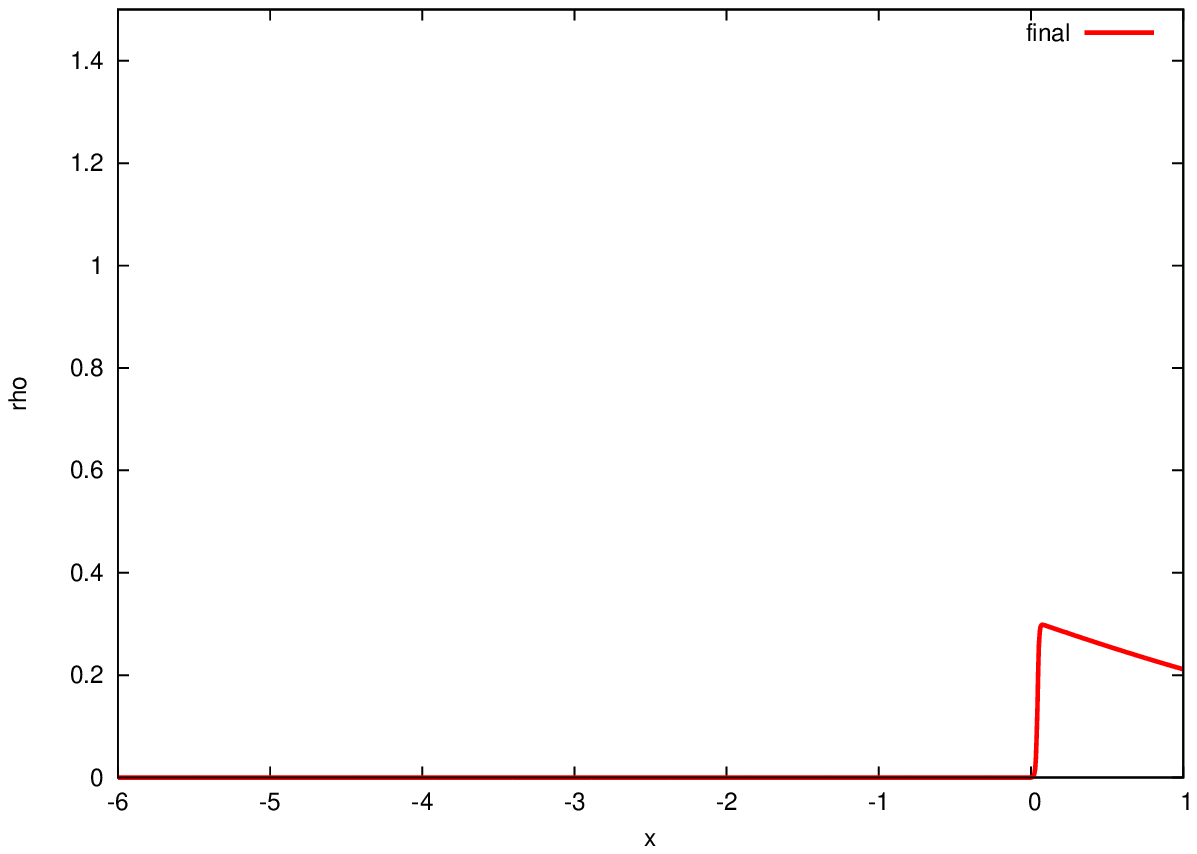}
\end{psfrags}}
  \end{tabular}
\caption{With reference to Subsection \ref{sec:Braess}: Braess paradox simulations: density profiles at times $t=1$ (first line), $t=7$ (second line), $t=15$ (third line), $t=19$  (fourth line) and $t=24.246$ (last line).}
\label{Braess_snapshots}
\end{figure}

Then we performed some series of tests to see how the general shape obtained in Figure~\ref{fis_exit_times_velocities} changes with respect to variations of the parameters of the model. 
 In Figure~\ref{fis_stabilities}~(a), we show this variation when we consider different initial densities, namely, $\bar\rho$, $\bar{\rho}_1$ and $\bar{\rho}_2$ with $\bar\rho_1(x)=0.8\chi_{[-5.75,-2]}$ and $\bar\rho_2(x)=0.6\chi_{[-5.75,-2]}$. The general shape of the curves is conserved. We observe that the evacuation time increases with the initial amount of pedestrians while the optimal velocity decreases as the initial amount of pedestrians increases. The minimal evacuation time and the corresponding optimal maximal velocity are $12.259$ and $1.07$ for $\bar{\rho}_2$ and $15.691$ and $1.03$ for $\bar{\rho}_1$.
 
Next we explore the case where the efficiency of the exit varies. We consider the function $p$ defined in~\eqref{P_continuous} and the modification $p_\beta$ such that $p_\beta(\xi)=p(\beta\xi)$. In Figure~\ref{p_beta}, we plotted the functions $p$, $p_\beta$ for $\beta=0.8$ and $\beta=0.9$. 
Then, in Figure~\ref{fis_stabilities}~(b) are plotted the evacuation time curves corresponding to these three efficiencies of the exit. 
As minimum evacuation times, we obtain $18.586$ and $18.827$ for $\beta=0.8$, $0.9$ respectively. As expected, the minimal evacuation time increases with lower efficiency of the exit. The corresponding velocities are approximatively $1.06$ and $1.02$ respectively.

Finally, we change the location of the initial density. In addition to the corridor $[-6,1]$, we consider two other corridors modeled by the segments $[-12,1]$ and $[-20,1]$. In these two corridors we take as initial densities $\bar{\rho}_3(x)=\chi_{[-11.75,-8]}$ and $\bar{\rho}_4(x)=\chi_{[-19.75,-16]}$ respectively. We have reported the obtained evacuation time curves in Figure~\ref{fis_stabilities}~(c). As expected, the minimal evacuation time increases with the distance between the exit and the initial density location.     

\subsection{Braess' paradox}\label{sec:Braess}

The presence of obstacles, such as columns upstream from the exit, may prevent the crowd density from reaching dangerous values and may actually help to minimize the evacuation time, since in a moderate density regime the full capacity of the exit can be exploited. 
From a microscopic point of view, the decrease of the evacuation time may seem unexpected, as some of the pedestrians are forced to chose a longer path to reach the exit. 

The ADR model is able to reproduce the Braess' paradox for pedestrians, as we show in the following simulations. 
We consider, as in the previous subsection, the corridor modeled by the segment $[-6,1]$ with an exit at $x=0$.
We compute the solution corresponding to the flux $f(\rho)=\rho(1-\rho)$, the initial density $\bar\rho(x)=\chi_{[-5.75,-2]}(x)$, the efficiency of the exit $p$ of the form~\eqref{P_continuous} with the parameters 
\begin{align*}
    &p_0 =0.21, && p_1 =0.1, && \xi_1=0.566, && \xi_2=0.731
\end{align*}
and the same weight function $w(x)=2(1+x)\chi_{[-1,0]}(x)$. The space and time steps are fixed to $\Delta x=5\times10^{-3}$ and $\Delta t=5\times10^{-4}$. Without any obstacle, the numerical evacuation time is $29.496$. In these following simulations we place an obstacle at $x=d$, with $-2<d<0$.
The obstacle reduces the capacity of the corridor and can be seen as a door, which we assume larger than the one at $x=0$. Following these ideas we define an efficiency function $p_d(\xi)=1.15 p(\xi)$ and a weight function $w_d(x)=2(x-d+1)\chi_{[d-1,d]}(x)$ associated to the obstacle. 

In Figure~\ref{braess_exit_times} we have reported the evolution of the evacuation time when the position of the obstacle varies in the interval $[-1.9,-0.01]$ with a step of 0.01. We observe that for $-1.8\le d\le-1.72$, the evacuation time is lower than in the absence of the obstacle. The optimal position of the obstacle is obtained for $d=-1.72$ and the corresponding evacuation time is $24.246$. We compare in Figure~\ref{Braess_snapshots} five snapshots of the solution without obstacle and the solutions with an obstacle placed at $d=-1.72$ and $d=-1.85$. This latter location corresponds to a case where the evacuation time is greater than the one without an obstacle. In these snapshots, we see that the obstacle placed at $d=-1.85$ becomes congested very soon. This is due to the fact that the obstacle is too close to the location of the initial density. When the obstacle is placed at $d=-1.72$, it delays the congestion at the exit.
  
\subsection{Zone of low velocity}\label{sec:Braess+FIS}
  
\begin{figure}
\centering
\begin{subfigure}[Evacuation time as a function of $\lambda$.]
{\begin{psfrags}
\psfrag{t}[][][0.8]{evacuation time}
\psfrag{lambda}[][][0.8]{$\lambda$}
\includegraphics[width=0.48\hsize]{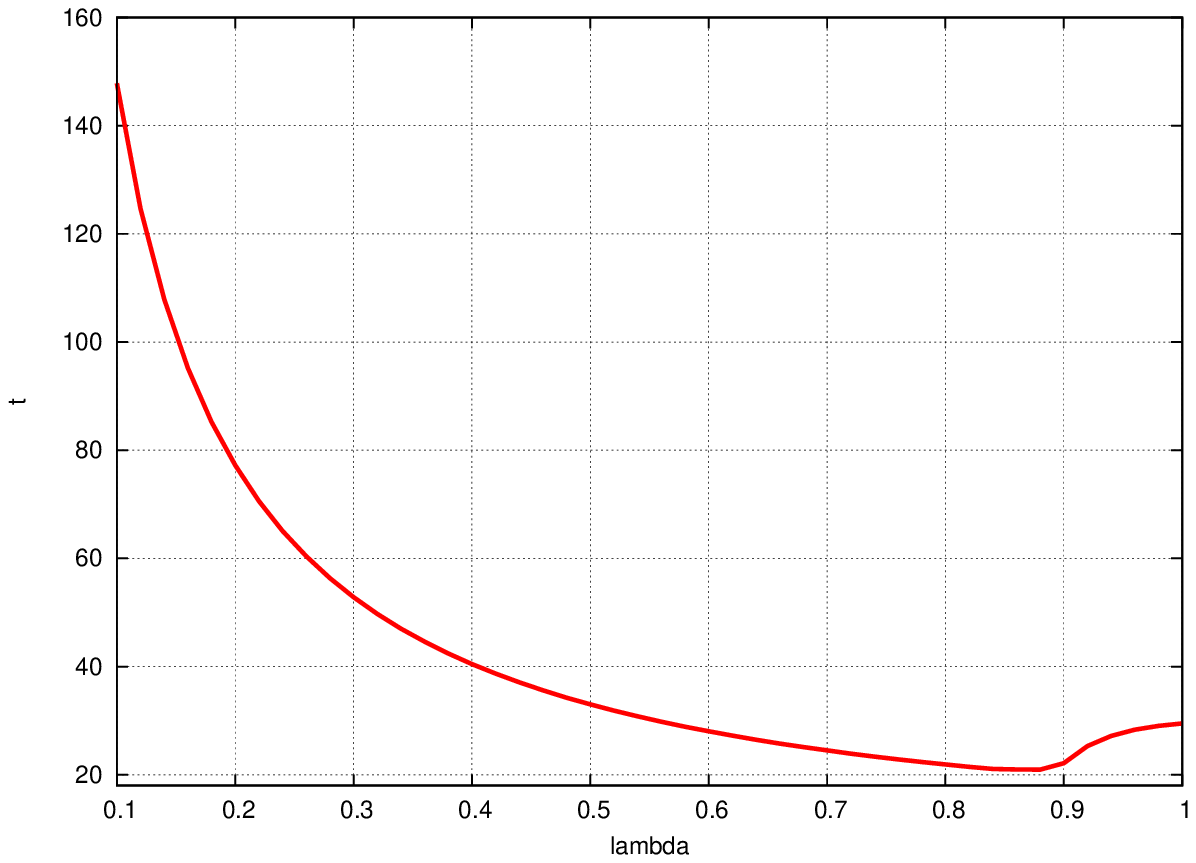}
\end{psfrags}}
\end{subfigure}~
\begin{subfigure}[Evacuation time as a function of $d$.]
{\begin{psfrags}
\psfrag{t}[][][0.8]{evacuation time}
\psfrag{d}[][][0.8]{$d$}
\includegraphics[width=0.48\hsize]{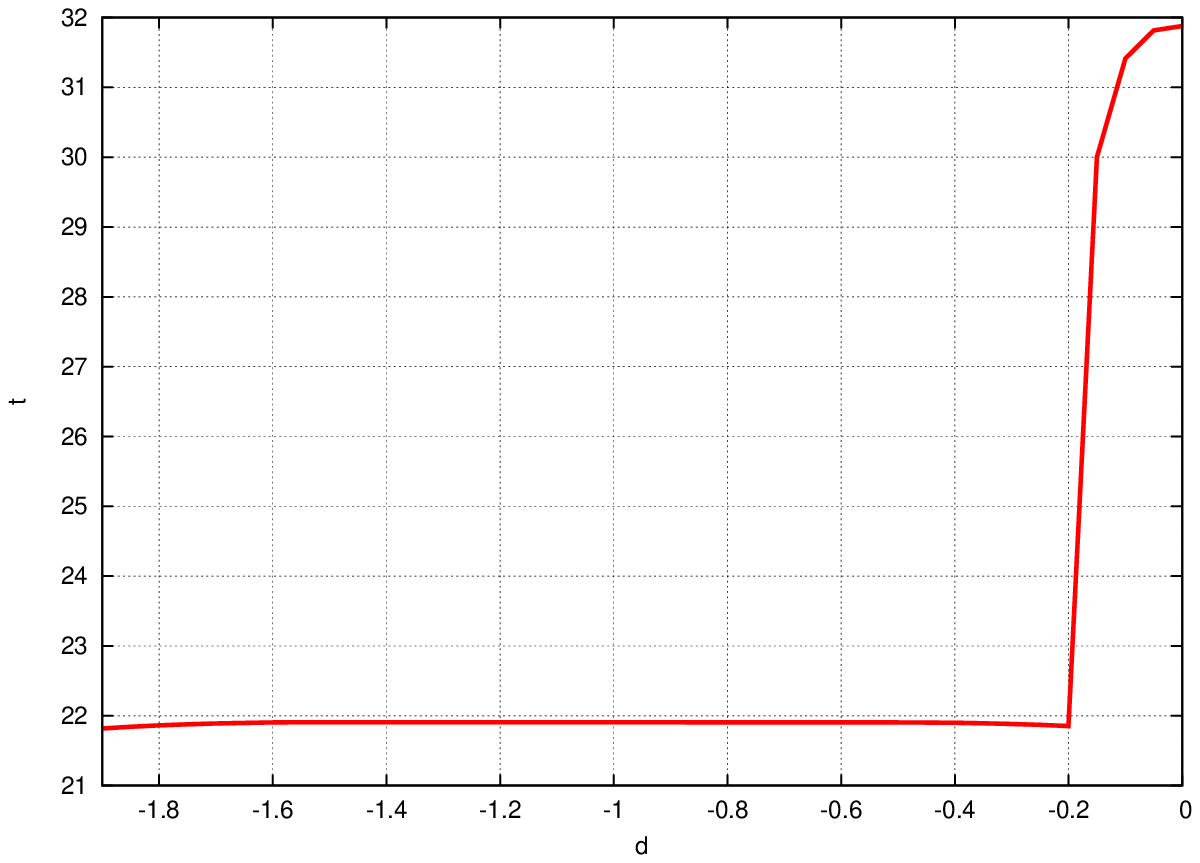}
\end{psfrags}}
\end{subfigure}

\begin{subfigure}[Evacuation time as a function of $v_{\max}$.]
{\begin{psfrags}
\psfrag{t}[][][0.8]{evacuation time}
\psfrag{v}[][][0.8]{$v_{\max}$}
\includegraphics[width=0.48\hsize]{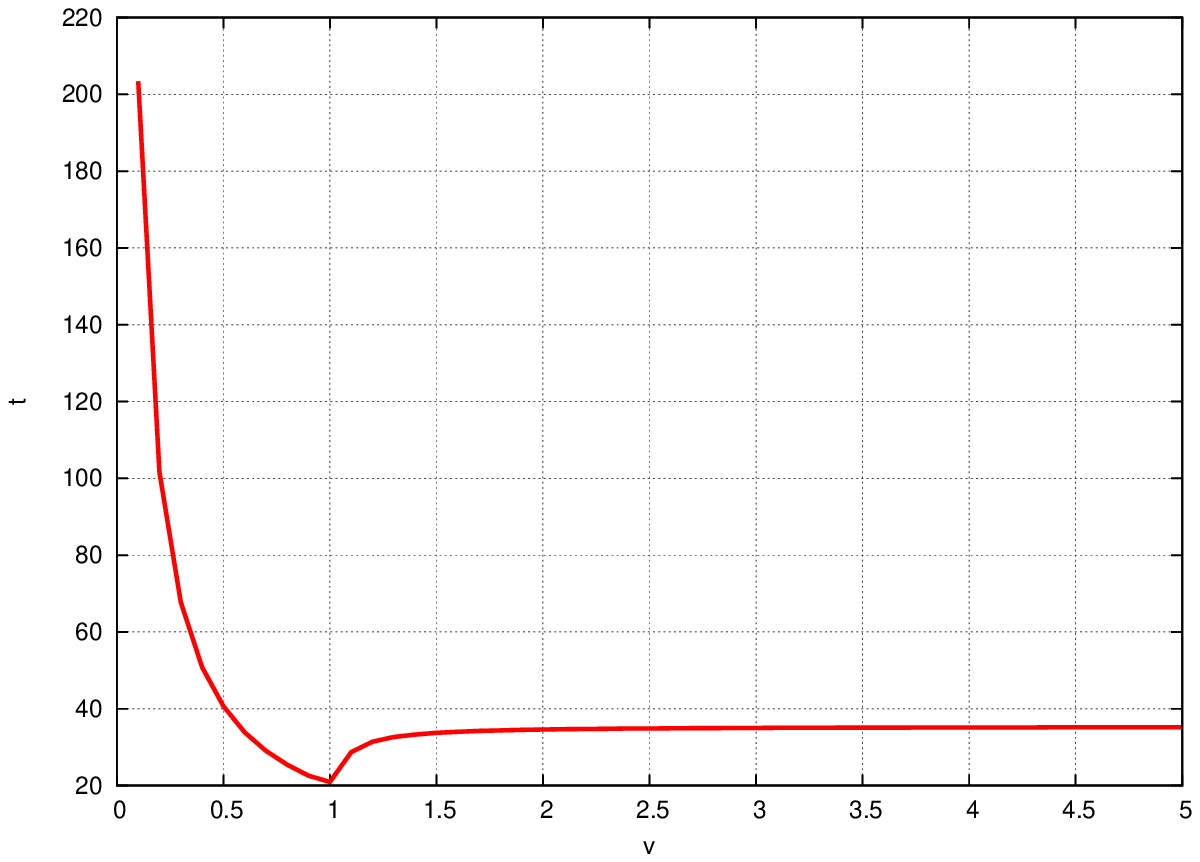}
\end{psfrags}}
\end{subfigure}
\tiny \caption {With reference to Subection \ref{sec:Braess+FIS}: Evacuation time as a function of different parameters of the model.}
\label{slowzone}
\end{figure}

\noindent In this section, we perform a series of simulations where the obstacle introduced in Subsection~\ref{sec:Braess} is now replaced by a zone where the velocity of pedestrians is lower than elsewhere in the domain.
The effect we want to observe here is similar to the one we see in Braess' Paradox. Namely we prevent an high concentration of pedestrians in front of the exit by constraining their flow in an upstream portion of the corridor. In this case however the constraint is local, as the maximal value allowed for the flow only depends on the position in the corridor. 

\begin{figure}
\centering
\renewcommand{\arraystretch}{1.5}
\begin{tabular}{>{\centering\bfseries}m{0.05\hsize} @{}>{\centering}m{0.3\hsize} @{}>{\centering}m{0.3\hsize} @{}>{\centering\arraybackslash}m{0.3\hsize}}
& Without obstacle & Obstacle at $d=-1.72$ & Zone of low velocity centered at $d=-1.72$\\
\rotatebox{90}{$t=1$}&
{\begin{psfrags}
\psfrag{t=1}[r][][0.4]{$\displaystyle\vphantom{\int^{\int^{\int}}}\rho(1,x)$}
\psfrag{rho}[][][0.8]{}
\psfrag{x}[][][0.8]{$x$}
\includegraphics[width=\hsize]{SO_1.eps}
\end{psfrags}}&
{\begin{psfrags}
\psfrag{t=1}[r][][0.4]{$\displaystyle\vphantom{\int^{\int^{\int}}}\rho(1,x)$}
\psfrag{rho}[][][0.8]{}
\psfrag{x}[][][0.8]{$x$}
\includegraphics[width=\hsize]{d172_1.eps}
\end{psfrags}}&
{\begin{psfrags}
\psfrag{t=1}[r][][0.4]{$\displaystyle\vphantom{\int^{\int^{\int}}}\rho(1,x)$}
\psfrag{rho}[][][0.8]{}
\psfrag{x}[][][0.8]{$x$}
\includegraphics[width=\hsize]{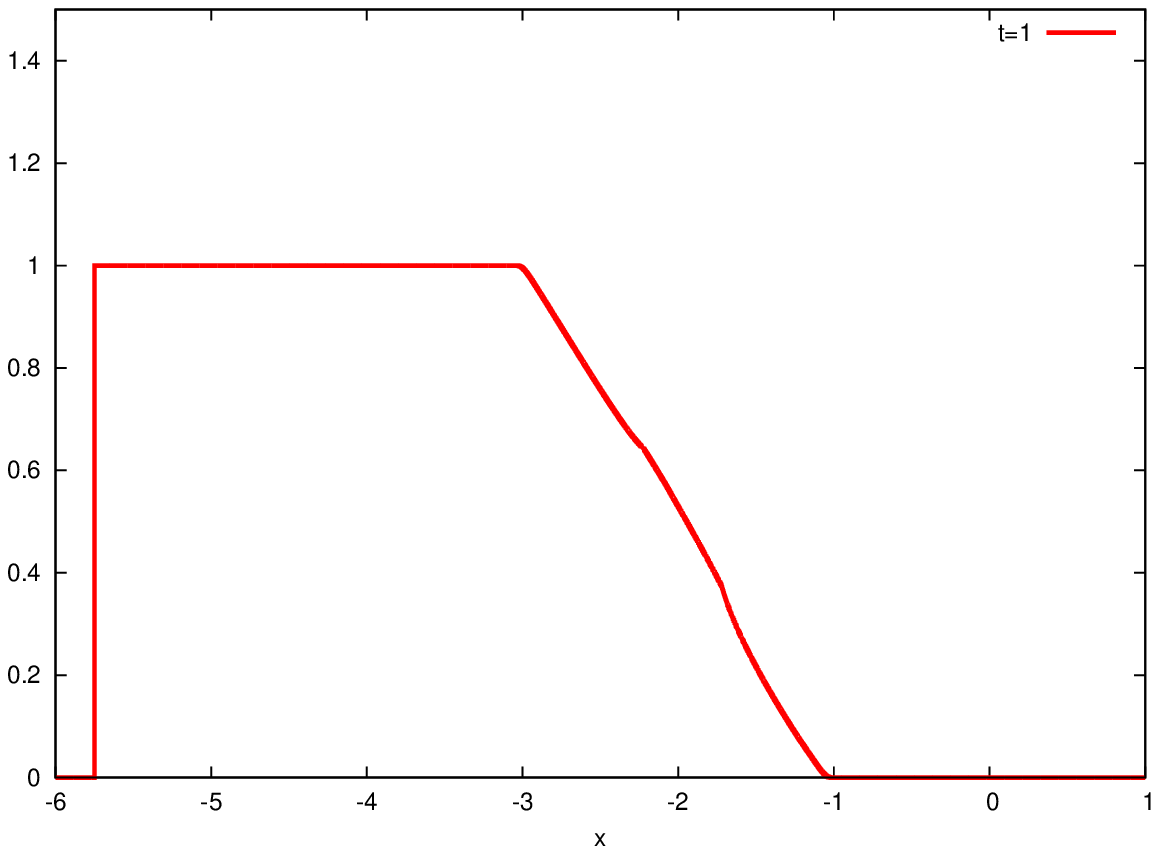}
\end{psfrags}}
\\
\rotatebox{90}{$t=7$}&
{\begin{psfrags}
\psfrag{t=7}[r][][0.4]{$\displaystyle\vphantom{\int^{\int^{\int}}}\rho(7,x)$}
\psfrag{rho}[][][0.8]{}
\psfrag{x}[][][0.8]{$x$}
\includegraphics[width=\hsize]{SO_7.eps}
\end{psfrags}}&
{\begin{psfrags}
\psfrag{t=7}[r][][0.4]{$\displaystyle\vphantom{\int^{\int^{\int}}}\rho(7,x)$}
\psfrag{rho}[][][0.8]{}
\psfrag{x}[][][0.8]{$x$}
\includegraphics[width=\hsize]{d172_7.eps}
\end{psfrags}}&
{\begin{psfrags}
\psfrag{t=7}[r][][0.4]{$\displaystyle\vphantom{\int^{\int^{\int}}}\rho(7,x)$}
\psfrag{rho}[][][0.8]{}
\psfrag{x}[][][0.8]{$x$}
\includegraphics[width=\hsize]{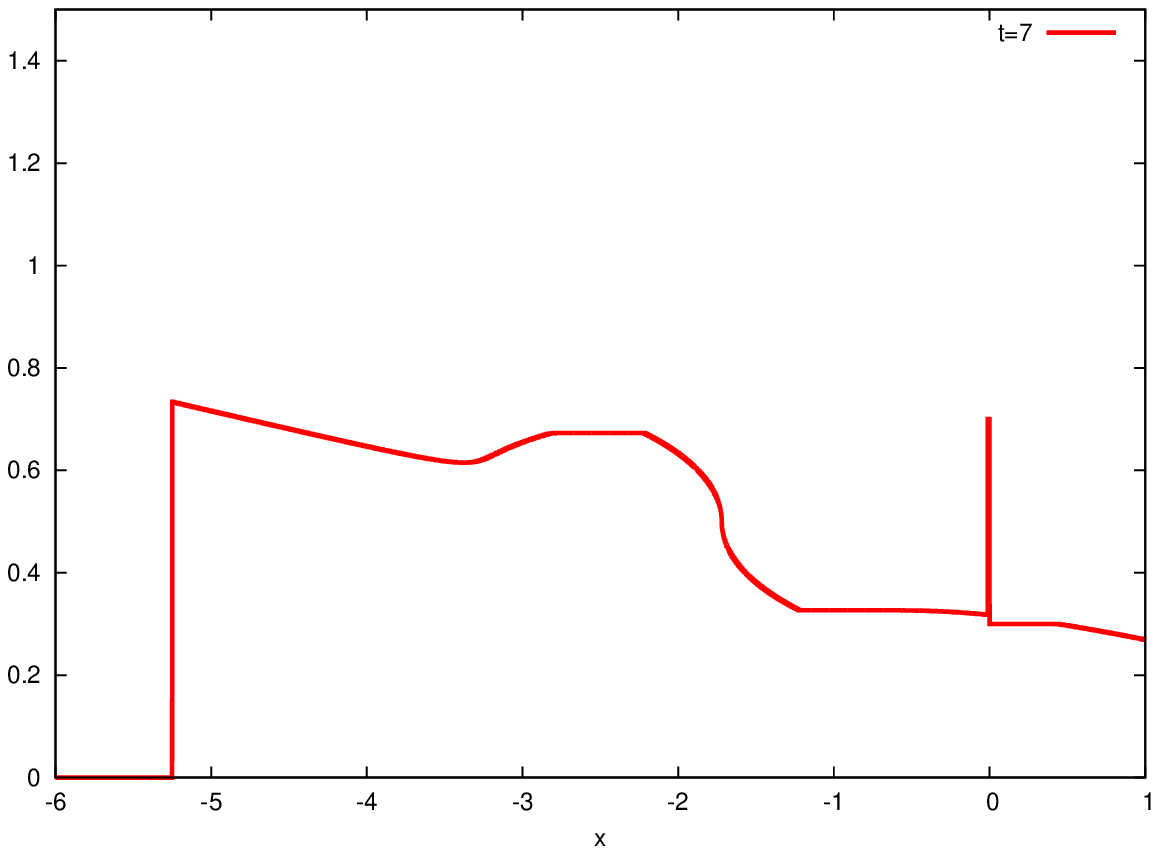}
\end{psfrags}}
\\
\rotatebox{90}{$t=15$}&
{\begin{psfrags}
\psfrag{t=15}[r][][0.4]{$\displaystyle\vphantom{\int^{\int^{\int}}}\rho(15,x)$}
\psfrag{rho}[][][0.8]{}
\psfrag{x}[][][0.8]{$x$}
\includegraphics[width=\hsize]{SO_15.eps}
\end{psfrags}}&
{\begin{psfrags}
\psfrag{t=15}[r][][0.4]{$\displaystyle\vphantom{\int^{\int^{\int}}}\rho(15,x)$}
\psfrag{rho}[][][0.8]{}
\psfrag{x}[][][0.8]{$x$}
\includegraphics[width=\hsize]{d172_15.eps}
\end{psfrags}}&
{\begin{psfrags}
\psfrag{t=15}[r][][0.4]{$\displaystyle\vphantom{\int^{\int^{\int}}}\rho(15,x)$}
\psfrag{rho}[][][0.8]{}
\psfrag{x}[][][0.8]{$x$}
\includegraphics[width=\hsize]{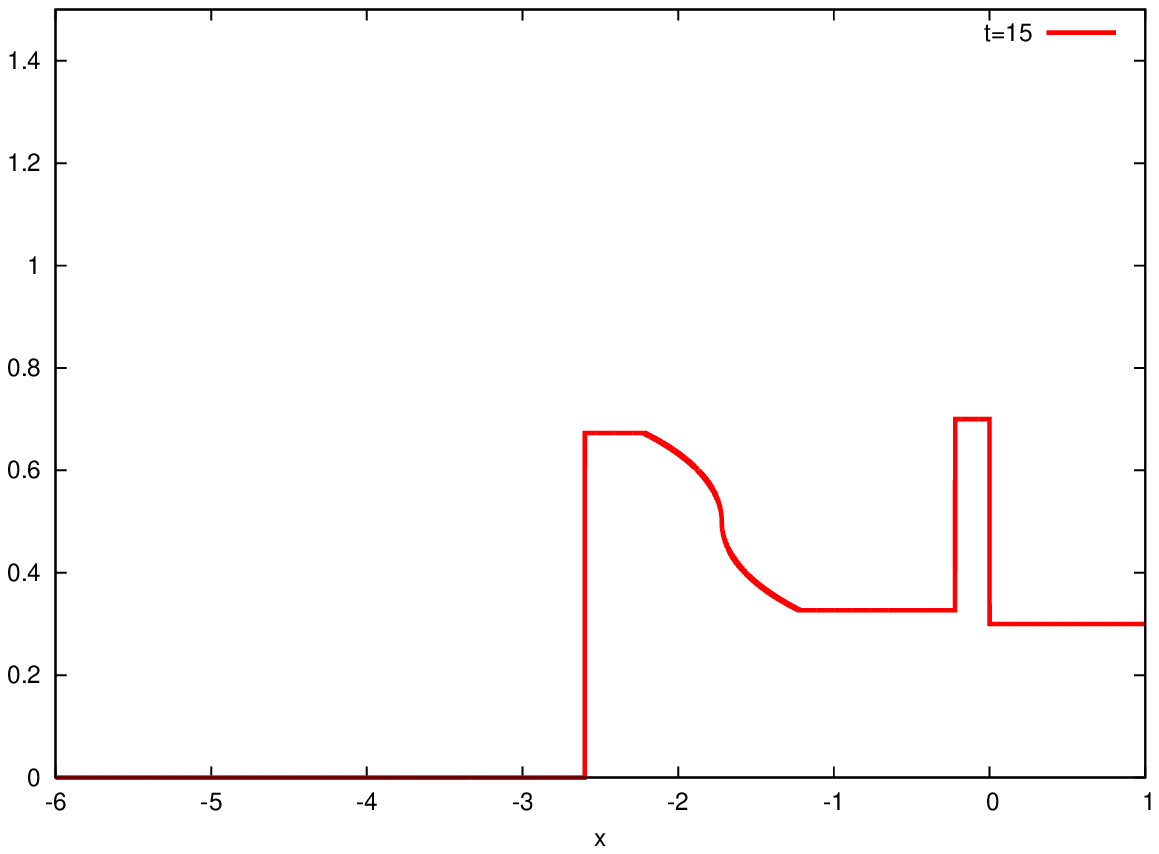}
\end{psfrags}}
\\
\rotatebox{90}{$t=19$}&
{\begin{psfrags}
\psfrag{t=19}[r][][0.4]{$\displaystyle\vphantom{\int^{\int^{\int}}}\rho(19,x)$}
\psfrag{rho}[][][0.8]{}
\psfrag{x}[][][0.8]{$x$}
\includegraphics[width=\hsize]{SO_19.eps}
\end{psfrags}}&
{\begin{psfrags}
\psfrag{t=19}[r][][0.4]{$\displaystyle\vphantom{\int^{\int^{\int}}}\rho(19,x)$}
\psfrag{rho}[][][0.8]{}
\psfrag{x}[][][0.8]{$x$}
\includegraphics[width=\hsize]{d172_19.eps}
\end{psfrags}}&
{\begin{psfrags}
\psfrag{t=19}[r][][0.4]{$\displaystyle\vphantom{\int^{\int^{\int}}}\rho(19,x)$}
\psfrag{rho}[][][0.8]{}
\psfrag{x}[][][0.8]{$x$}
\includegraphics[width=\hsize]{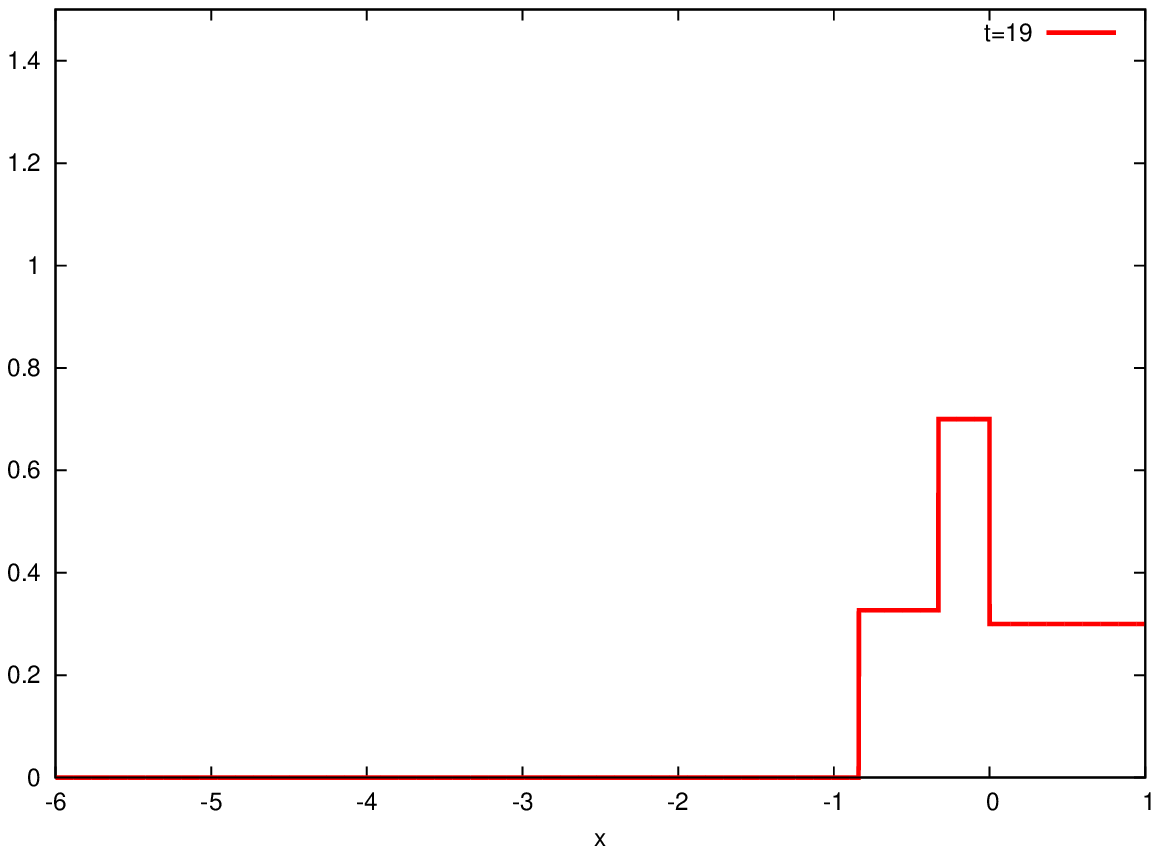}
\end{psfrags}}
\\
\rotatebox{90}{$t=20.945$}&
{\begin{psfrags}
\psfrag{t=20.945}[r][][0.4]{$\displaystyle\vphantom{\int^{\int^{\int}}}\rho(20.945,x)$}
\psfrag{rho}[][][0.8]{}
\psfrag{x}[][][0.8]{$x$}
\includegraphics[width=\hsize]{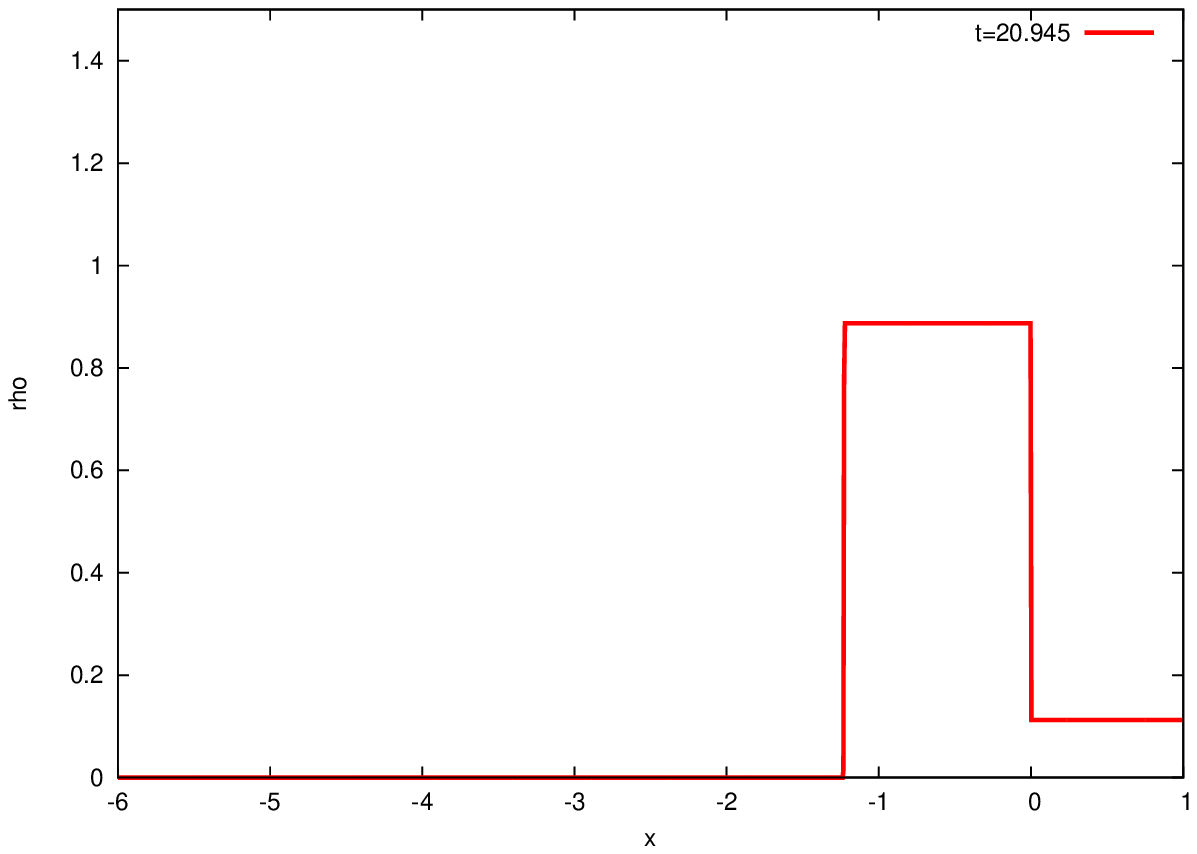}
\end{psfrags}}&
{\begin{psfrags}
\psfrag{t=20.945}[r][][0.4]{$\displaystyle\vphantom{\int^{\int^{\int}}}\rho(20.945,x)$}
\psfrag{rho}[][][0.8]{}
\psfrag{x}[][][0.8]{$x$}
\includegraphics[width=\hsize]{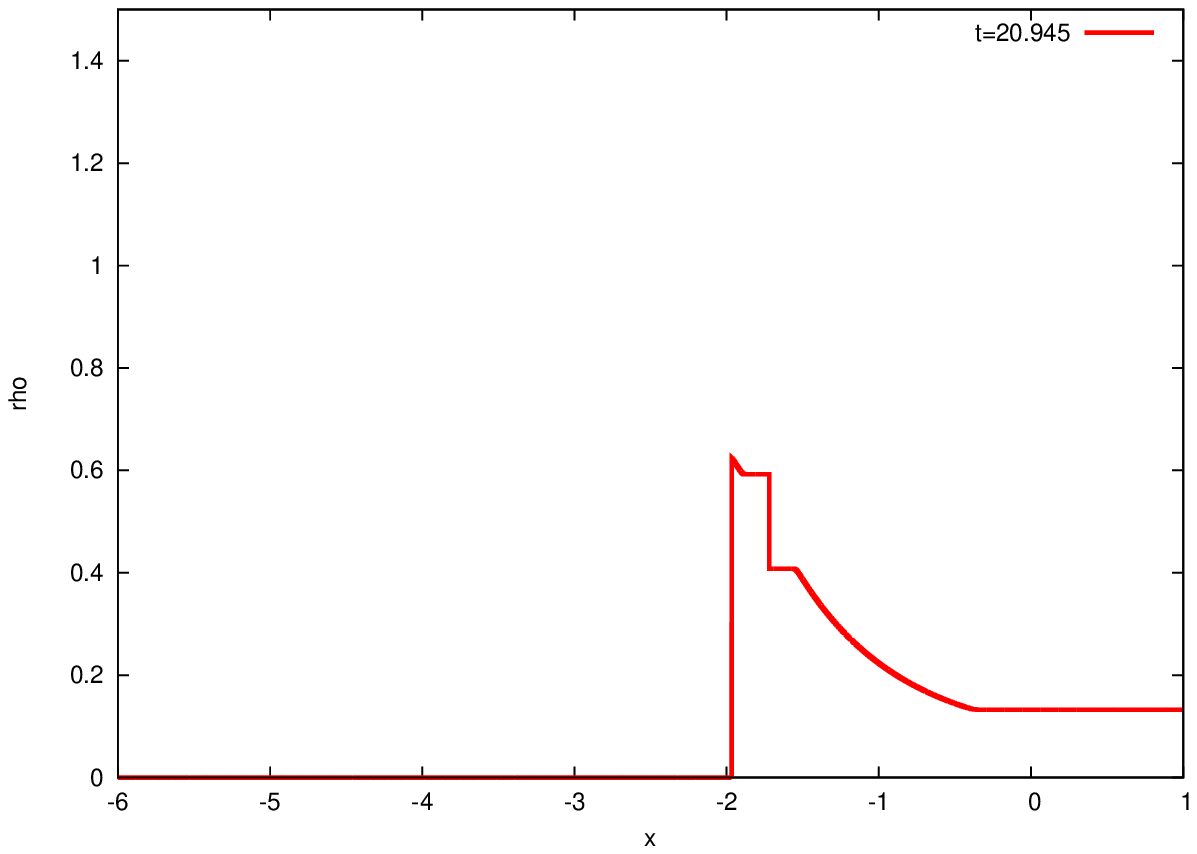}
\end{psfrags}}&
{\begin{psfrags}
\psfrag{t=20.945}[r][][0.4]{$\displaystyle\vphantom{\int^{\int^{\int}}}\rho(20.945,x)$}
\psfrag{rho}[][][0.8]{}
\psfrag{x}[][][0.8]{$x$}
\includegraphics[width=\hsize]{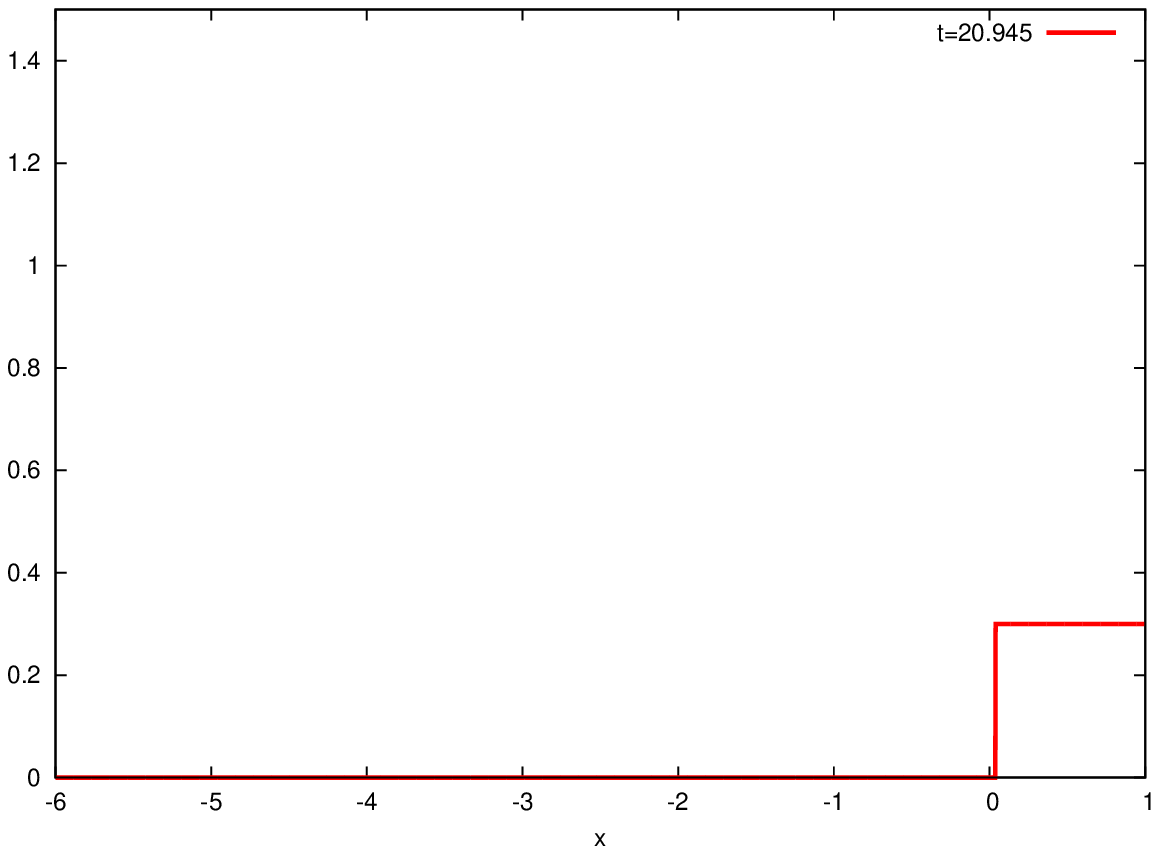}
\end{psfrags}}
  \end{tabular}
\caption{With reference to Subsection \ref{sec:Braess+FIS}: Braess' paradox and zone of low velocity simulations: density profiles at times $t=1$ (first line), $t=7$ (second line), $t=15$ (third line), $t=19$  (fourth line) and $t=20.945$ (last line).}
\label{column_snapshots}
\end{figure}
We consider again the corridor modeled by the segment $[-6,1]$ with an exit at $x=0$. The efficiency of the exit and the initial density are the same as in the previous subsection. Assume that the slow zone is of size one and is centred at $x=d$, where $-1.9\le d\le0$. Define the following function

\begin{eqnarray}
\label{def_k}
    k(x) &= \left\{
    \begin{array}{l@{\quad\text{ if }}l}
       1 & x\le d-0.5,\\[6pt]
       -2(x-d) & d-0.5\le x\le d,\\[10pt]
       2(x-d) & d\le x\le d+0.5,\\[10pt]
       1 & x\ge d+0.5,
    \end{array}
    \right.
\end{eqnarray}  
and the following velocity $v(x,\rho)= \left[\lambda+(1-\lambda) \, k(x)\right] v_{\max} \, (1-\rho)$, where $\lambda\in[0,1]$ and $v_{\max}\ge1$ is the maximal velocity. With such velocity, the maximal velocity of pedestrians decreases in the interval $[d-0.5,d]$, reaching its minimal value $\lambda \, v_{\max}$ at $x=d$. Then the velocity increases in the interval $[d,d+0.5]$ reaching the maximum value $v_{\max}$, that corresponds to the maximal velocity away from the slow zone. Finally we consider the flux $f(x,\rho)=\rho\,v(x,\rho)$ and the space and time steps are fixed to $\Delta x=5\times10^{-3}$ and $\Delta t=5\times10^{-4}$.

\noindent Figure~\ref{slowzone}~(a) shows the evolution of the evacuation time as a function of the parameter $\lambda$ varying in the interval $[0.1,1]$ when the center of the slow zone is fixed at $d=-1.5$. We observe that the optimal minimal velocity in the slow zone is for $\lambda=0.88$ and the corresponding evacuation time is $20.945$. Recalling that without the slow zone the evacuation time is $29.496$, we see that the introduction of the slow zone allows to reduce the evacuation time. In Figure~\ref{slowzone}~(b), we show the evolution of the evacuation time when varying the center of the slow zone $d$ in the interval $[-1.9,0]$ and when the minimal and the maximal velocities are fixed and correspond to $\lambda=0.88$ and $v_{\max}=1$. We observe here that, unlike in the Braess paradox tests case, the evacuation time does not depend on the location of the slow zone, except when this latter is close enough to the exit. 
Indeed, when the slow zone gets too close to the exit, the evacuation time grows. This is due to the fact that pedestrians do not have time to speed up before reaching the exit.

Fix now $d=-1.5$ and $\lambda=0.88$ and assume that $v_{\max}$ varies in the interval $[0.1,5]$. The evolution of the evacuation time as a function of $v_{\max}$ is reported in Figure~\ref{slowzone}~(c). We observe that we get the characteristic shape already obtained in the FIS effect.

Finally we present in Figure~\ref{column_snapshots} five snapshots for three different solutions. The first two solutions are the ones computed in Subsection~\ref{sec:Braess}, without obstacle and with an obstacle located at $d=~-1.72$ respectively. The third solution is computed with a zone of low velocity centered at $d=-1.72$, $\lambda=0.88$ and $v_{\max}=1$. In order to have a good resolution of this third solution, the space and time steps where fixed to $\Delta x=3.5\times10^{-4}$ and $\Delta t=7\times10^{-5}$.We note that in the case where a zone of low velocity is placed in the domain, we do not see the capacity drop, as the density of pedestrians never attains very high values in the region next to the exit.

\section{Conclusions}\label{sec:conclusions}

Qualitative features that are characteristic of pedestrians' macroscopic behaviour at bottlenecks (Faster is Slower, Braess' paradox) are reproduced in the setting of the simple scalar model with non-local point constraint introduced in~\cite{BorisCarlottaMax-M3AS}. These effects are shown to be persistent for large intervals of values of parameters. The validation is done by means of a simple and robust time-explicit splitting finite volume scheme which is proved to be convergent, with experimental rate close to one. 

The results presented in this paper allow to consider more complex models.  Indeed, as ADR is a first order model, it is not able to capture more complicated effects related to crowd dynamics. Typically, ADR fails to reproduce the amplification of small perturbations. This leads to consider second order model such as the model proposed by Aw, Rascle and Zhang~\cite{Aw_SIAP_2000, Zhang_Method_2002} in the framework of vehicular traffic.

Another extension of this work is to consider the ADR model with constraints that are non-local in time. Such constraints allow to tackle optimal management problems in the spirit of \cite{ColomboFacchiMaterniniRosini, CGRESAIM}.  

Finally, this work can also be extended to two-dimensional models where experimental validations may be possible. 
 
\section*{Acknowledgment}

All the authors are supported by French ANR JCJC grant CoToCoLa and Polonium 2014 (French-Polish cooperation program) No.331460NC. The first author is grateful to IRMAR, Universit\'e de Rennes, for the hospitality during the preparation of this paper. The second author is also supported by the Universit\'e de Franche-Comt\'e, soutien aux EC 2014.

Projekt zosta{\l} sfinansowany ze \'srodk\'ow Narodowego Centrum Nauki przyznanych na podstawie decyzji nr: DEC-2011/01/B/ST1/03965.



\section*{References}

\bibliographystyle{elsarticle-num}
\bibliography{bibliography-BCMU-Numerix}

\begin{thebibliography}{10}
\expandafter\ifx\csname url\endcsname\relax
  \def\url#1{\texttt{#1}}\fi
\expandafter\ifx\csname urlprefix\endcsname\relax\def\urlprefix{URL }\fi
\expandafter\ifx\csname href\endcsname\relax
  \def\href#1#2{#2} \def\path#1{#1}\fi

\bibitem{BorisCarlottaMax-M3AS}
B.~Andreianov, C.~Donadello, M.~D. Rosini, Crowd dynamics and conservation laws
  with nonlocal constraints and capacity drop, Mathematical Models and Methods
  in Applied Sciences 24~(13) (2014) 2685--2722.
\newblock \href {http://dx.doi.org/10.1142/S0218202514500341}
  {\path{doi:10.1142/S0218202514500341}}.

\bibitem{LWR1}
M.~J. {Lighthill}, G.~B. {Whitham}, {On Kinematic Waves. II. A Theory of
  Traffic Flow on Long Crowded Roads}, Royal Society of London Proceedings
  Series A 229 (1955) 317--345.
\newblock \href {http://dx.doi.org/10.1098/rspa.1955.0089}
  {\path{doi:10.1098/rspa.1955.0089}}.

\bibitem{LWR2}
P.~I. Richards, Shock waves on the highway, Operations Research 4~(1) (1956)
  42--51.
\newblock \href {http://dx.doi.org/10.1287/opre.4.1.42}
  {\path{doi:10.1287/opre.4.1.42}}.

\bibitem{Greenshields_1934}
B.~D. Greenshields, A study of traffic capacity, Proceeding of the Highway
  Research Board 14 (1934) 448--477.

\bibitem{Schadschneider2008Evacuation}
A.~Schadschneider, W.~Klingsch, H.~Kl{\"u}pfel, T.~Kretz, C.~Rogsch,
  A.~Seyfried, {Evacuation Dynamics: Empirical Results, Modeling and
  Applications}, in: R.~A. Meyers (Ed.), Extreme Environmental Events, Springer
  New York, 2011, pp. 517--550.
\newblock \href {http://dx.doi.org/10.1007/978-1-4419-7695-6_29}
  {\path{doi:10.1007/978-1-4419-7695-6_29}}.

\bibitem{Cepolina2009532}
E.~M. Cepolina, Phased evacuation: An optimisation model which takes into
  account the capacity drop phenomenon in pedestrian flows, Fire Safety Journal
  44~(4) (2009) 532--544.
\newblock \href {http://dx.doi.org/10.1016/j.firesaf.2008.11.002}
  {\path{doi:10.1016/j.firesaf.2008.11.002}}.

\bibitem{Hoogendoorn01052005}
S.~P. Hoogendoorn, W.~Daamen, Pedestrian behavior at bottlenecks,
  Transportation Science 39~(2) (2005) 147--159.
\newblock \href {http://dx.doi.org/10.1287/trsc.1040.0102}
  {\path{doi:10.1287/trsc.1040.0102}}.

\bibitem{Kopylow}
V.~Kopylow, {The study of people' motion parameters under forced egress
  situations}, {Ph.D.~Thesis}, Moscow Civil Engineering Institute (1974).

\bibitem{Schreckenberg2006}
T.~Kretz, A.~Gr{\"u}nebohm, M.~Kaufman, F.~Mazur, M.~Schreckenberg,
  Experimental study of pedestrian counterflow in a corridor, Journal of
  Statistical Mechanics: Theory and Experiment 2006~(10) (2006) P10001.
\newblock \href {http://dx.doi.org/10.1088/1742-5468/2006/10/P10001}
  {\path{doi:10.1088/1742-5468/2006/10/P10001}}.

\bibitem{Seyfried:59568}
A.~Seyfried, T.~Rupprecht, A.~Winkens, O.~Passon, B.~Steffen, W.~Klingsch,
  M.~Boltes, \href{http://juser.fz-juelich.de/record/59568}{{Capacity
  Estimation for Emergency Exits and Bottlenecks}}, in: Interflam 2007, 2007,
  pp. 247--258, record converted from VDB: 12.11.2012.
\newline\urlprefix\url{http://juser.fz-juelich.de/record/59568}

\bibitem{Zhang20132781}
X.~L. Zhang, W.~G. Weng, H.~Y. Yuan, J.~G. Chen, Empirical study of a
  unidirectional dense crowd during a real mass event, Physica A: Statistical
  Mechanics and its Applications 392~(12) (2013) 2781--2791.

\bibitem{Helbing_disaster}
D.~Helbing, A.~Johansson, H.~Z. Al-Abideen, {Dynamics of crowd disasters: An
  empirical study}, Phys. Rev. E 75 (2007) 046109.
\newblock \href {http://dx.doi.org/10.1103/PhysRevE.75.046109}
  {\path{doi:10.1103/PhysRevE.75.046109}}.

\bibitem{Helbing2000Simulating}
D.~Helbing, I.~Farkas, T.~Vicsek, Simulating dynamical features of escape
  panic, Nature 407~(6803) (2000) 487--490.
\newblock \href {http://dx.doi.org/doi:10.1038/35035023}
  {\path{doi:doi:10.1038/35035023}}.

\bibitem{Soria20121584}
S.~A. Soria, R.~Josens, D.~R. Parisi, {Experimental evidence of the ``Faster is
  Slower'' effect in the evacuation of ants}, Safety Science 50~(7) (2012)
  1584--1588.
\newblock \href {http://dx.doi.org/10.1016/j.ssci.2012.03.010}
  {\path{doi:10.1016/j.ssci.2012.03.010}}.

\bibitem{hughes2003flow}
R.~L. Hughes, The flow of human crowds, Annual review of fluid mechanics 35~(1)
  (2003) 169--182.
\newblock \href {http://dx.doi.org/10.1146/annurev.fluid.35.101101.161136}
  {\path{doi:10.1146/annurev.fluid.35.101101.161136}}.

\bibitem{Rosinibook}
M.~D. Rosini, \href{http://dx.doi.org/10.1007/978-3-319-00155-5}{Macroscopic
  models for vehicular flows and crowd dynamics: theory and applications},
  Understanding Complex Systems, Springer, Heidelberg, 2013.
\newblock \href {http://dx.doi.org/10.1007/978-3-319-00155-5}
  {\path{doi:10.1007/978-3-319-00155-5}}.
\newline\urlprefix\url{http://dx.doi.org/10.1007/978-3-319-00155-5}

\bibitem{scontrainte}
B.~Andreianov, P.~Goatin, N.~Seguin, {Finite volume schemes for locally
  constrained conservation laws}, Numerische Mathematik 115 (2010) 609--645.
\newblock \href {http://dx.doi.org/10.1007/s00211-009-0286-7}
  {\path{doi:10.1007/s00211-009-0286-7}}.

\bibitem{AndreianovDonadelloRosiniRazafisonProc}
B.~Andreianov, C.~Donadello, U.~Razafison, M.~D. Rosini, Riemann problems with
  non--local point constraints and capacity drop, Mathematical Biosciences and
  Engineering 12~(2) (2015) 259--278.
\newblock \href {http://dx.doi.org/10.3934/mbe.2015.12.259}
  {\path{doi:10.3934/mbe.2015.12.259}}.

\bibitem{ColomboGoatinConstraint}
R.~M. Colombo, P.~Goatin, \href{http://dx.doi.org/10.1016/j.jde.2006.10.014}{A
  well posed conservation law with a variable unilateral constraint}, J.
  Differential Equations 234~(2) (2007) 654--675.
\newblock \href {http://dx.doi.org/10.1016/j.jde.2006.10.014}
  {\path{doi:10.1016/j.jde.2006.10.014}}.
\newline\urlprefix\url{http://dx.doi.org/10.1016/j.jde.2006.10.014}

\bibitem{chalonsgoatinseguin}
C.~Chalons, P.~Goatin, N.~Seguin,
  \href{http://dx.doi.org/10.3934/nhm.2013.8.433}{General constrained
  conservation laws. {A}pplication to pedestrian flow modeling}, Netw. Heterog.
  Media 8~(2) (2013) 433--463.
\newblock \href {http://dx.doi.org/10.3934/nhm.2013.8.433}
  {\path{doi:10.3934/nhm.2013.8.433}}.
\newline\urlprefix\url{http://dx.doi.org/10.3934/nhm.2013.8.433}

\bibitem{Kruzkov}
S.~Kru{\v{z}}hkov, First order quasilinear equations with several independent
  variables., Mat. Sb. (N.S.) 81 (123) (1970) 228--255.

\bibitem{GodlewskiRaviartBook}
E.~Godlewski, P.-A. Raviart,
  \href{http://dx.doi.org/10.1007/978-1-4612-0713-9}{Numerical approximation of
  hyperbolic systems of conservation laws}, Vol. 118 of Applied Mathematical
  Sciences, Springer-Verlag, New York, 1996.
\newblock \href {http://dx.doi.org/10.1007/978-1-4612-0713-9}
  {\path{doi:10.1007/978-1-4612-0713-9}}.
\newline\urlprefix\url{http://dx.doi.org/10.1007/978-1-4612-0713-9}

\bibitem{LevequeBook}
R.~J. LeVeque, Finite volume methods for hyperbolic problems, Cambridge Texts
  in Applied Mathematics, Cambridge University Press, Cambridge, 2002.
\newblock \href {http://dx.doi.org/10.1017/CBO9780511791253}
  {\path{doi:10.1017/CBO9780511791253}}.

\bibitem{Parisi2005606}
D.~Parisi, C.~Dorso, Microscopic dynamics of pedestrian evacuation, Physica A:
  Statistical Mechanics and its Applications 354~(0) (2005) 606 -- 618.
\newblock \href
  {http://dx.doi.org/http://dx.doi.org/10.1016/j.physa.2005.02.040}
  {\path{doi:http://dx.doi.org/10.1016/j.physa.2005.02.040}}.

\bibitem{Aw_SIAP_2000}
A.~Aw, M.~Rascle, Resurrection of ``second order'' models of traffic flow, SIAM
  J. Appl. Math. 60~(3) (2000) 916--938 (electronic).
\newblock \href {http://dx.doi.org/10.1137/S0036139997332099}
  {\path{doi:10.1137/S0036139997332099}}.

\bibitem{Zhang_Method_2002}
H.~Zhang, A non-equilibrium traffic model devoid of gas-like behavior,
  Transportation Research Part B: Methodological 36~(3) (2002) 275 -- 290.
\newblock \href
  {http://dx.doi.org/http://dx.doi.org/10.1016/S0191-2615(00)00050-3}
  {\path{doi:http://dx.doi.org/10.1016/S0191-2615(00)00050-3}}.

\bibitem{ColomboFacchiMaterniniRosini}
R.~M. Colombo, G.~Facchi, G.and~Maternini, M.~D. Rosini, On the continuum
  modeling of crowds, in: Hyperbolic problems: theory, numerics and
  applications, Vol.~67 of Proc. Sympos. Appl. Math., Amer. Math. Soc.,
  Providence, RI, 2009, pp. 517--526.
\newblock \href {http://dx.doi.org/10.1090/psapm/067.2/2605247}
  {\path{doi:10.1090/psapm/067.2/2605247}}.

\bibitem{CGRESAIM}
R.~Colombo, P.~Goatin, M.~Rosini, {On the modelling and management of traffic},
  ESAIM: Mathematical Modelling and Numerical Analysis 45~(05) (2011) 853--872.
\newblock \href {http://dx.doi.org/10.1051/m2an/2010105}
  {\path{doi:10.1051/m2an/2010105}}.

\end{thebibliography}





\end{document}